\newcommand{\R}{\mathbb{R}}
\newcommand{\C}{\mathbb{C}}
\newcommand{\Z}{\mathbb{Z}}
\newcommand{\Q}{\mathbb{Q}}
\newcommand{\N}{\mathbb{N}}
\newcommand{\K}{\mathbb{K}}
\renewcommand{\P}{\mathbb{P}}
\newcommand{\OO}{\mathcal{O}}
\newcommand{\CC}{\mathcal{C}}
\newcommand{\EE}{\mathcal{E}}
\newcommand{\HH}{\mathcal{H}}
\newcommand{\mmm}{\mathfrak{m}}
\DeclareMathOperator{\ord}{ord}
\DeclareMathOperator{\Jac}{Jac}
\DeclareMathOperator{\divi}{div}
\theoremstyle{plain}
\newtheorem{thm}{Theorem}[section]
\newtheorem{dfn}[thm]{Definition}
\newtheorem{lem}[thm]{Lemma}
\newtheorem{pro}[thm]{Proposition}
\newtheorem{cor}[thm]{Corollary}
\newtheorem*{thmA}{Theorem A}
\newtheorem*{thmC}{Theorem C}
\newtheorem*{corB}{Corollary B}
\newtheorem*{corD}{Corollary D}
\newcommand{\Xreg}{X_{\text{reg}}}
\DeclareMathOperator{\Exc}{Exc}
\newcommand{\Mp}[1]{}
\newcommand{\Mpe}[1]{}
\newcommand{\DS}{\text{DS}}
\numberwithin{equation}{section}
\title{The Jacobian cocycle and equidistribution towards the Green current}
\author{Rodrigo Parra}
\address{Department of Mathematcs, University of Michigan, 530 Church Street, Ann Arbor, MI 48109-1043.}
\email{rparra@umich.edu}
\begin{document}
\maketitle

\begin{abstract}
Given a holomorphic self-map of complex projective space of degree larger than one, we prove that there exists a finite collection of totally invariant algebraic sets with the following property: given any positive closed (1,1)-current of mass 1 with no mass on any element of this family, the sequence of normalized pull-backs of the current converges to the Green current. Under suitable geometric conditions on the collection of totally invariant algebraic sets, we prove a sharper equidistribution result.
\end{abstract}

\tableofcontents

\section{Introduction}

Let $\P^k$ be the complex projective space of dimension $k$ and let $f:\P^k\to\P^k$ be a holomorphic map of algebraic degree $d\geq2$. It is well known (see \cite{MR1285389}, \cite{HubbardPapadopol}) that there exists a positive closed (1,1)-current $T_f$, the \emph{Green current}, such that for every smooth (1,1)-form $\alpha$ in the cohomology class of the Fubini-Study form $\omega$, the sequence of smooth (1,1)-forms $d^{-n}(f^n)^*\alpha$ converges to $T_f$ in the sense of currents. A natural question to ask is if such behavior also occurs when we replace smooth forms by currents. More precisely, if $S$ is a positive closed (1,1)-current of mass 1, when does the convergence

\begin{equation}\label{equidistribution}
d^{-n}(f^n)^*S\rightarrow T_f
\end{equation}	\Mpe{equidistribution} hold? This last convergence is what we refer to as \emph{equidistribution}. The answer: Not always. Assume for example that there exists a \emph{totally invariant} irreducible hypersurface $X\subset\P^k$, i.e. $f^{-s}(X)=X$ for some $s\in\N$ (which for simplicity we take to be $s=1$); then its current of integration $[X]$ satisfies $f^*[X]=d[X]$ giving us that

\begin{equation*}
d^{-n}(f^n)^*[X]=[X]\nrightarrow T_f
\end{equation*} since the current $T_f$ has no mass on any algebraic subsets of $\P^k$ (in particular, it cannot be the current of integration of an algebraic variety. See \cite{SibSurvey} for more details). Therefore, it seems that the appearance of totally invariant algebraic sets restricts the possibility of having equidistribution. A very important feature of holomorphic maps is that the collection of all totally invariant algebraic subsets of $\P^k$ is finite (see \cite{MR1285389}, \cite{MR2468484}).\\

In dimension $k=1$ (i.e. $\P^1$ is the Riemann sphere and $T_f$ is an invariant probability measure), a famous result by Brolin \cite{MR0194595} (for the case of a polynomial self-maps of $\C$) and by Lyubich \cite{MR741393}, Freire-Lopes-Ma\~n\'e \cite{MR736568} (for the case of rational self-maps of $\P^1$) states that there exists a collection $\EE_f$ of totally invariant points (also called \emph{exceptional points}), of cardinality at most 2, with the following property:\\

Given any probability measure $\nu$ on $\P^1$, $d^{-n}(f^n)^*\nu$ converges to $T_f$ if and only if $\nu(\{p\})=0$ for all $\{p\}\in\EE_f$. In particular, for every $x\in\P^1$ which is not exceptional

\begin{equation}\label{equidistributionpreimages}
d^{-n}(f^n)^*\delta_x=\frac1{d^n}\sum_{f^n(y)=x}\delta_y\to T_f
\end{equation}\Mpe{equidistributionpreimages} as $n\to+\infty$, where $\delta_x$ denotes the Dirac mass at $x$. The equation \eqref{equidistributionpreimages} also shows that the sequence of preimages of points outside $\EE_f$ accumulate along the Julia set of $f$.\\

The situation for $k=2$ is already highly more involved. Some partial results for equidistribution in $\P^2$ for holomorphic (and meromorphic) maps were obtained by J. E. Forn{\ae}ss and N. Sibony \cite{MR1369137}, A. Russakovskii. and B. Shiffman \cite{RussakovskiiShiffman-Sequences} and others. C. Favre and M. Jonsson finished the characterization for the two-dimensional case in \cite{MR2032940} (see also \cite{MR2339287}) proving the following: There exists a family $\EE_f$ of totally invariant irreducible algebraic subsets of $\P^2$, containing at most 3 lines and a finite number of points, with the following property: given any positive closed (1,1)-current $S$ of mass 1, $d^{-n}(f^n)^*S$ converges to $T_f$ if and only if $S$ has no mass on any element of $\EE_f$. The elements of $\EE_f$ are attracting in nature and this collection can be strictly smaller than the collection of all totally invariant irreducible algebraic subsets of $\P^k$.\\

In higher dimensions the situation is not as well understood, particularly since we do not have any satisfactory classification of totally invariant algebraic subsets of $\P^k$. In order to tackle this, we make an assumption on the singular locus of totally invariant algebraic subsets which will allow us to develop our methods.\\

We present our main result

\begin{thmA}\label{theoremA}
Let $f:\P^k\to\P^k$ be a holomorphic map of algebraic degree $d\geq2$ and assume that all totally invariant algebraic subsets have normalizations with at worst isolated quotient singularities. Then there exists a finite collection $\EE_f$ of irreducible totally invariant algebraic sets with the following property: given any positive closed (1,1)-current $S$ of mass 1 with no mass on any element of $\EE_f$ we have

$$d^{-n}(f^n)^*S\rightarrow T_f$$ as $n\to+\infty$ in the sense of currents.
\end{thmA}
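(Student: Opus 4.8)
The plan is to follow the general strategy pioneered by Favre--Jonsson and refined by Dinh--Sibony, adapting it to higher dimensions under the quotient-singularity hypothesis. The starting point is the structure theory of totally invariant algebraic sets: since the collection of all totally invariant irreducible algebraic subsets of $\P^k$ is finite, one first wants to understand, for each totally invariant irreducible hypersurface $X$, the local behavior of $f$ near $X$ and the rate at which mass concentrates on $X$ under pull-back. The key quantitative input should be a \emph{Jacobian cocycle} estimate (as the title suggests): near a totally invariant hypersurface $X$ one writes $f^*[X] = d[X]$ and studies the transverse multiplier, i.e. how $f$ contracts or expands in the normal direction. The collection $\EE_f$ should be defined as the totally invariant irreducible sets that are \emph{attracting} in an appropriate transverse sense; for those that are not attracting, one must show that no positive closed $(1,1)$-current of mass $1$ can accumulate mass there under $d^{-n}(f^n)^*$.

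The main body of the argument would then proceed in several steps. First, decompose an arbitrary positive closed $(1,1)$-current $S$ of mass $1$ using its Lelong numbers and the Siu decomposition, reducing to the case where $S$ has small Lelong numbers along every totally invariant set; the hypotheses let us subtract off the currents of integration along the totally invariant hypersurfaces with controlled coefficients. Second, after removing the exceptional part, write $S = T_f + dd^c u$ for a suitable quasi-potential $u$ (a $\omega$-psh function, or rather a $DS$-type potential), normalized appropriately; then the convergence $d^{-n}(f^n)^*S \to T_f$ is equivalent to $d^{-n}(f^n)^* u \to 0$ in $L^1$, or equivalently to the vanishing of the limiting Lelong-type masses. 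Third — and this is the heart — one must show that the potentials $d^{-n}(f^n)^* u$ cannot retain mass, which is where the quotient-singularity assumption enters: on the normalization $\widetilde X$ of a totally invariant set, the resolution of the isolated quotient singularities allows one to run a one-lower-dimensional version of the equidistribution statement (an induction on dimension), controlling the behavior of $u$ restricted to neighborhoods of $X$ via the induced dynamics on $\widetilde X$, which is again a polynomial-like or finite map.

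The main obstacle I expect is precisely this last step: controlling the mass of $d^{-n}(f^n)^* S$ in a neighborhood of a totally invariant algebraic set that is \emph{not} in $\EE_f$ but along which $S$ \emph{a priori} could carry positive Lelong number or near which $f$ behaves neutrally (neither strictly attracting nor repelling in the transverse direction). Classically in dimension $2$ one uses that the transverse multipliers are rational and an explicit trichotomy; in higher dimension and with singular totally invariant sets this requires (a) a good local normal form for $f$ near $X$ compatible with the quotient singularities of $\widetilde X$, and (b) an argument that the ``bad'' set where the cocycle is non-expanding is itself contained in a lower-dimensional totally invariant set, so that an induction closes. A secondary technical difficulty is handling the interaction of several totally invariant hypersurfaces meeting along lower-dimensional strata, where one must be careful that subtracting integration currents one at a time does not destroy positivity; this is managed by working with the minimal such subtraction and invoking the finiteness of the family.

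The remaining pieces are comparatively standard: the ``no mass on $\EE_f$'' currents form a set on which the pull-back operator $d^{-n}(f^n)^*$ is, in a suitable weak sense, a contraction towards $T_f$ (using the mixing properties of $T_f$ and the fact that $T_f$ has continuous — in fact H\"older — local potentials off the post-critical set, together with a compactness argument for the space of such currents), so once the local analysis near each totally invariant set rules out escaping mass, a diagonal/compactness argument upgrades weak subsequential limits to the full convergence $d^{-n}(f^n)^* S \to T_f$. I would organize the write-up so that the Jacobian cocycle estimates are proved first as standalone lemmas, then the definition of $\EE_f$ is given in terms of the sign of a Lyapunov-type exponent of this cocycle, and finally Theorem~A follows by the decomposition-plus-induction scheme outlined above.
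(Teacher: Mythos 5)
Your outline reproduces the general philosophy (a Jacobian cocycle, induction over the finite family of totally invariant sets via their normalizations), but it misses, or replaces by unsupported claims, the three steps that actually carry the proof. First, the reduction from an arbitrary current $S$ to a hypersurface is not done by Siu decomposition and passage to potentials: the hypothesis only excludes mass on elements of $\EE_f$, so after subtracting integration currents you still have a current with uncontrolled Lelong numbers along non-invariant hypersurfaces and along invariant sets outside $\EE_f$, and your concluding step (``the pull-back operator is a weak contraction towards $T_f$ by mixing and H\"older potentials, plus compactness'') is exactly the statement to be proved, not an argument. The paper instead uses Guedj's criterion $d^{-n}(f^n)^*S\to T_f \Leftrightarrow \sup_x\nu(d^{-n}(f^n)^*S,x)\to0$, and a Demailly-type approximation producing hypersurfaces $H_m$ with $\ord_X(H_m)\le\nu(S,X)$ for all $X\in\EE_f$ together with the uniform-in-$n$ bound $\nu(d^{-n}(f^n)^*S,x)\le d^{-n}\ord_x((f^n)^*H_m)+(k+1)/m$; this uniformity is nontrivial and rests on the log-discrepancy identity $ra_{E'}=a_E+\ord_E(\CC_f)$ and a Boucksom--Favre--Jonsson-type estimate $0\le\nu(S,E)-\nu(S_m,E)\le a_E/m$. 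Nothing in your plan supplies this bridge from currents to divisors.

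Second, the cocycle is not the transverse multiplier along an invariant hypersurface, and $\EE_f$ is not defined by an ``attracting'' or Lyapunov-sign condition: for $g=f|_X$ lifted to the normalization $\tilde X$, one sets $\mu_n^{\tilde X}(x)=C_{\tilde X}(\dim\tilde X+\ord_x\CC_{\tilde g^n})$, which is submultiplicative because of the key inequality $\ord_x(\phi\circ g)\le C_X(k+\ord_x\CC_g)\,\ord_{g(x)}\phi$; the isolated-quotient-singularity hypothesis enters precisely here (local smooth covers $(\C^k,0)\to(X,x)$ to compare orders of vanishing at singular points), not through a resolution-plus-induction-on-dimension scheme. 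Third, the step you correctly flag as the main obstacle --- that the ``bad'' set where the cocycle grows like $d^n$, namely $E_{\tilde X}=\bigcup_{\delta>0}\bigcap_n\{\ord_x(\CC_{\tilde g^n})\ge\delta d^n\}$, is algebraic and totally invariant --- is left without any argument in your proposal; the paper proves it by combining a self-intersection inequality \`a la Demailly with Dinh--Sibony's lower bound on degrees of preimages, and only then obtains $\sup_{x\notin E_{\tilde X}}\mu_n^{\tilde X}(x)\le C\rho^n$ with $\rho<d$ and closes the induction on the partially ordered family $\EE_f$ (successors being the components of $E_X$). Without (a) the reduction to hypersurfaces with uniform Lelong control, (b) the cocycle comparison theorem on the (klt, $\Q$-factorial) normalizations, and (c) the algebraicity of $E_{\tilde X}$, the outline does not yield Theorem~A.
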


The finite family $\EE_f$ coincides with the ones already obtained for $k=1$ and $k=2$. It is constructed following the ideas of Favre and Jonsson in \cite{MR2032940}, but we need to push the methods further and overcome technical difficulties that appear in dimension three and higher.\\

Our assumption on the singularities of the totally invariant algebraic subsets (namely, to have normalizations with at worst isolated quotient singularities) holds rather trivially in dimension 2, and also in dimension 3, as can be derived from the work of J. Wahl \cite{MR1044058}, N. Nakayama \cite{NakayamaRIMS}, D.Q. Zhang \cite{Zhang-classification} and C. Favre \cite{FavreSelfMapsSurfaces} (see Section 6 for details). From this we obtain as a corollary a sharper equidistribution result in dimension 3

\begin{corB}\label{corollaryB}
Let $f:\P^3\to\P^3$ be a holomorphic map of degree $d\geq2$. There exists a finite collection $\EE_f$ of irreducible totally invariant algebraic sets with the following property: given any positive closed (1,1)-current $S$ with no mass on any element of $\EE_f$, the sequence $d^{-n}(f^n)^*S$ converges to $T_f$ in the sense of currents.
\end{corB}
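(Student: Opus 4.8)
The plan is to derive Corollary B from Theorem A, so the entire task reduces to verifying the hypothesis of Theorem A in the case $k=3$: namely, that every irreducible totally invariant algebraic subset $X\subset\P^3$ has a normalization with at worst isolated quotient singularities. Once this geometric fact is established, Theorem A applies verbatim and produces the desired finite family $\EE_f$ together with the equidistribution statement $d^{-n}(f^n)^*S\to T_f$.

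The verification itself splits according to the dimension of $X$. If $\dim X=0$ then $X$ is a point and there is nothing to prove; if $\dim X=1$ then $X$ is a totally invariant curve, its normalization $\tilde X$ is a smooth curve, and smooth varieties trivially have (empty, hence isolated quotient) singularities. The substantive case is $\dim X=2$, i.e. $X$ a totally invariant hypersurface in $\P^3$. Here I would argue as follows. The restriction of $f$ to $X$ lifts to a finite self-map $g\colon\tilde X\to\tilde X$ of the normalization, and since $f^*[X]=d[X]$ one checks that $g$ is non-invertible with topological degree $d^2>1$; in particular $\tilde X$ carries a non-invertible finite endomorphism. A normal projective surface admitting such an endomorphism is severely constrained: by the classification results I would invoke the work of Wahl \cite{MR1044058}, Nakayama \cite{NakayamaRIMS}, D.-Q. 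Zhang \cite{Zhang-classification}, and Favre \cite{FavreSelfMapsSurfaces}, which together show that a normal projective surface with a non-invertible finite endomorphism has only quotient singularities (indeed the minimal resolution is either a torus quotient, a ruled surface over a curve with a non-invertible endomorphism, or a surface with quotient singularities of log-canonical type), and these are automatically isolated on a surface. This is precisely the content I would expand in Section 6, and it yields the hypothesis of Theorem A for $k=3$.

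With the geometric input in hand, the proof of Corollary B is then one sentence: apply Theorem A with $k=3$. I would present it as: ``By the discussion in Section 6, every irreducible totally invariant algebraic subset of $\P^3$ has normalization with at worst isolated quotient singularities; Corollary B now follows immediately from Theorem A.''

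I expect the main obstacle to lie entirely in the surface case of the geometric verification — specifically, in correctly assembling the endomorphism-classification of normal projective surfaces from the cited literature and checking that total invariance of $X$ under $f$ indeed produces a \emph{non-invertible} finite endomorphism of $\tilde X$ (one must rule out the degenerate possibility that $g$ is an automorphism, which is where the identity $f^*[X]=d[X]$ with $d\ge2$ is used). The lower-dimensional cases and the final deduction are routine. A secondary, purely expository, point is to make sure the notion of ``quotient singularity'' used in the cited surface classification matches the one in the hypothesis of Theorem A, so that the application is literal rather than up to a translation.
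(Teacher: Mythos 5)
Your overall strategy is the same as the paper's: reduce Corollary B to checking the hypothesis of Theorem A in dimension $3$, dispose of points and curves trivially, and concentrate on totally invariant surfaces $X\subset\P^3$. The gap is in the surface step. The statement you want to extract from the literature -- ``a normal projective surface admitting a non-invertible finite endomorphism has only quotient singularities'' -- is false, and it is not what Wahl, Nakayama, Zhang and Favre prove. For example, the projective cone over an elliptic curve carries non-isomorphic surjective endomorphisms (induced by multiplication maps on the curve) and has a simple elliptic singularity at the vertex, which is strictly log canonical and not a quotient singularity; such surfaces genuinely occur in Nakayama's classification. The paper's own argument makes the difficulty explicit: by Favre's Theorem B, a singular point of $\tilde X$ that is fixed by (an iterate of) $\tilde g$ and lies \emph{off} the critical set is log canonical but \emph{not} klt, so at such points one cannot conclude ``quotient singularity'' from the existence of the endomorphism alone. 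Your remark that non-invertibility of $g=f|_X$ must be checked is fine (its topological degree is $d^2$), but non-invertibility by itself does not yield the singularity statement you need.

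What the paper actually does in this case is different from your ``apply Theorem A verbatim'' reduction: it rules out cusp singularities using Nakayama's Theorem 1.4, and for the remaining strictly log canonical points it does \emph{not} show they are absent from $\tilde X$; instead it invokes Favre's Proposition 2.1 to replace $\tilde X$ by a proper modification $\bar X\to\tilde X$ with klt (hence, in dimension two, isolated quotient) singularities to which $\tilde g$ lifts, and then runs the proof of Theorem A on $\bar X$ rather than on the normalization itself. Alternatively -- and this is the route that would salvage your write-up as stated -- the paper cites Zhang's explicit classification of totally invariant surfaces in $\P^3$: such an $X$ is a plane or one of four explicit cubics, each of which is either normal with klt (= quotient) singularities or has smooth normalization (the one-point blow-up of $\P^2$). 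So to repair your proof you must either quote Zhang's concrete list, or follow the modification argument and accept that you are applying the \emph{method} of Theorem A to $\bar X$ rather than its statement to $\tilde X$; the abstract endomorphism-implies-quotient-singularities claim is not available.
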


We conjecture that the converse implication is also true: if the sequence $d^{-n}(f^n)^*S$ converges to $T_f$ then $S$ has no mass on any element of $\EE_f$; this would extend the results already known in dimensions one and two.\\

The equidistribution problem in higher dimensions was studied already in \cite{MR1369137}, \cite{RussakovskiiShiffman-Sequences} and \cite{SibSurvey}. In \cite{MR2017143} V. Guedj showed that for a given positive closed (1,1)-current $S$ with Lelong numbers zero everywhere we have $d^{-n}(f^n)^*S\to T_f$ as $n\to+\infty$ (his result also holds for $f$ meromorphic). In \cite{MR2468484} T.-C. Dinh and N. Sibony established the following: There exists a finite collection $\EE_\DS$ of totally invariant irreducible algebraic subsets of $\P^k$ with the following property: given any positive closed (1,1)-current $S$ of mass 1 whose local potentials are not identically $-\infty$ on any element of $\EE_\DS$, $d^{-n}(f^n)^*S$ converges to $T_f$ (their result is in fact uniform in $S$ in a certain sense). The collection $\EE_\DS$ obtained by Dinh and Sibony is constructed inductively by studying the induced dynamics on totally invariant sets. Note that neither Guedj's result nor Dinh and Sibony's result imply each other.\\

It is important to notice that the families $\EE_f$ and $\EE_\DS$ are different and they satisfy the relation

$$\EE_f\subset\EE_\DS\subset\left\{\text{All totally invariant alg subsets of } \P^k\right\}$$ where the first and/or the second inclusion can be strict\\

Our techniques will also allow us to generalize what has been so far obtained by Dinh-Sibony and Guedj and provide the sharpest results known by the author for dimensions $k\geq3$ in the holomorphic setting. We extend Dinh-Sibony's result

\begin{thmC}\label{theoremC}
Let $f:\P^k\to\P^k$ be a holomorphic map of degree $d\geq2$. There exists a finite collection $\EE_\DS$ of totally invariant algebraic subsets of $\P^k$ with the following property: given any positive closed (1,1)-current $S$ with no mass on any element of $\EE_\DS$ we have

$$d^{-n}(f^n)^*S\rightarrow T_f$$ as $n\to+\infty$ in the sense of currents.
\end{thmC}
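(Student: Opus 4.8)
The plan is to compare a general current $S$ with no mass on $\EE_{\DS}$ to the Dinh--Sibony family by a regularization-and-localization argument, reducing to the situation already understood from \cite{MR2468484}. The key observation is that the obstruction in Dinh--Sibony's statement is the phrase ``local potentials not identically $-\infty$'' on elements of $\EE_{\DS}$, whereas here the hypothesis is the weaker-looking but in fact different condition that $S$ carries no mass on those sets. So first I would recall that for an irreducible totally invariant algebraic subset $X\in\EE_{\DS}$, by the Siu decomposition we may write $S=\lambda[X]+R$ with $R$ a positive closed $(1,1)$-current whose generic Lelong number along $X$ vanishes; the hypothesis that $S$ has no mass on $X$ forces $\lambda=0$ for every $X\in\EE_{\DS}$ simultaneously, so after iterating this over the (finitely many) elements of $\EE_{\DS}$ we may assume the generic Lelong number of $S$ along each $X\in\EE_{\DS}$ is zero.

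Next I would transfer ``zero generic Lelong number along $X$'' into control of the local potentials. The point is that if $u$ is a local psh potential for $S$ near a generic point of $X$, vanishing generic Lelong number along $X$ means precisely that the restriction $u|_X$ is not identically $-\infty$ on $X$ (a.e. point of $X$ is a point where $u$ is finite along $X$), which is exactly the hypothesis under which Dinh--Sibony's theorem applies — at least at the generic stratum. The technical difficulty is that $\EE_{\DS}$ is built inductively: on each $X\in\EE_{\DS}$, $f$ restricts to a holomorphic self-map $f|_X$ (after normalization and resolution, as in the excerpt's setup) and $\EE_{\DS}$ records the induced totally invariant sets inside $X$ too. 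So one has to run the argument through the stratification: on the top stratum $\P^k\setminus\bigcup\EE_{\DS}$ the potentials of $S$ are genuinely well-behaved and one applies the smooth-form equidistribution together with the compactness of positive closed currents to get convergence there; then one climbs down the strata, at each stage using that the restricted current $S|_X$ (defined via the potential, which is legitimate precisely because of the no-mass hypothesis) has no mass on the lower totally invariant sets, hence by the inductive hypothesis applied to $f|_X$ its pull-backs equidistribute towards the Green current of $f|_X$, which in turn controls the missing mass of $d^{-n}(f^n)^*S$ near $X$.

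The mechanism that glues the strata together is the standard one: let $T$ be any weak-$*$ limit of a subsequence of $d^{-n}(f^n)^*S$; then $T$ is positive, closed, of mass $1$, and $f^*T=dT$, so by the structure of backward-invariant currents (Siu decomposition plus total invariance) $T=\sum_j \mu_j[X_j]+T'$ with $T'$ having no mass on any totally invariant algebraic set and hence, by uniqueness of the Green current among such invariant currents, $T'=cT_f$; the whole problem is to show every $\mu_j=0$. This is where the no-mass hypothesis on $S$, propagated through the strata as above, is used: the coefficient $\mu_j$ is bounded by the generic Lelong number of $T$ along $X_j$, which by upper-semicontinuity of Lelong numbers under the (contracting on $\EE_{\DS}$) dynamics is controlled by the corresponding data for $S$, namely zero.

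The main obstacle I expect is exactly the inductive bookkeeping on the stratification together with the subtlety that restricting a current to a totally invariant set $X$ is only meaningful modulo the no-mass hypothesis, and that one must verify the restricted dynamics $f|_X$ still falls under the scope of \cite{MR2468484} — i.e. that $f|_X$ (suitably interpreted on a resolution of the normalization of $X$) is again a holomorphic endomorphism of the right type with its own Dinh--Sibony family. Handling the singular totally invariant sets correctly, and checking that the Lelong-number estimates survive pull-back by the normalization map, is the delicate technical heart; once that is in place the convergence statement follows by the compactness-and-uniqueness argument above.
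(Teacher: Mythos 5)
The decisive step in your outline is the claim that vanishing generic Lelong number of $S$ along $X$ is ``precisely'' the condition that the local potentials of $S$ are not identically $-\infty$ on $X$, which would place $S$ directly within the scope of Dinh--Sibony's theorem. Only one implication is true: if a potential is not identically $-\infty$ on $X$ then the generic Lelong number along $X$ vanishes, but the converse fails. For instance $u(z,w)=\sum_{j\ge1}\max\{2^{-j}\log|z|,\,-j\}$ is plurisubharmonic on a neighborhood of $\{z=0\}$ in $\C^2$, identically $-\infty$ on the line $\{z=0\}$, and yet has zero Lelong number at every point of that line. So your reduction does not bring $S$ under the hypotheses of \cite{MR2468484}; indeed, if the claimed equivalence held, Theorem C would be an immediate corollary of Dinh--Sibony, whereas the whole point is that the mass/Lelong-number hypothesis is strictly weaker than the potential hypothesis. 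The paper bridges exactly this gap with the Demailly-type approximation of Section 5: one replaces $S$ by hypersurface currents $\frac1m[H_m]$ with $\ord_X(H_m)\le\nu(S,X)=0$ for all $X\in\EE_\DS$ (so $H_m\not\supset X$ and the potential $\frac1m\log h^{\otimes(m+1)}(\sigma)$ is honestly finite at some point of $X$), applies Dinh--Sibony together with Guedj's criterion \eqref{gc} to $[H_m]$, and transfers the conclusion back to $S$ through the estimate $\nu\left(d^{-n}(f^n)^*S,x\right)\le d^{-n}\ord_x\left((f^n)^*H_m\right)+\frac{k+1}{m}$, which is uniform in $n$ after normalization (Proposition \ref{control1} and Theorem \ref{enoughhypersurface}); proving that estimate requires the log-discrepancy identity of Proposition \ref{propequaldiscrep} and the valuative bound of Lemma \ref{bfj}, none of which your argument supplies.

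A second gap is your gluing mechanism: you invoke ``uniqueness of the Green current among invariant currents with no mass on totally invariant algebraic sets'' to conclude that a cluster value decomposes as $\sum_j\mu_j[X_j]+cT_f$. No such classification is available for $k\ge3$; it is essentially the (conjectural) converse direction mentioned after Corollary B, so this part of the argument is circular. Moreover, cluster values of $d^{-n}(f^n)^*S$ satisfy $f^*T=dT$ only after an additional diagonal/subsequence argument, and the Siu-decomposition reduction in your first step only makes sense for the codimension-one elements of $\EE_\DS$: for higher-codimension elements the ``no mass'' condition is automatic by the support theorem while the generic Lelong number need not vanish, and it is the vanishing of generic Lelong numbers that the proof actually uses. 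The paper's route avoids all stratification, current-restriction and cluster-value issues for Theorem C: the only dynamical inputs are Dinh--Sibony applied to the hypersurfaces $H_m$ and Guedj's Lelong-number characterization, applied twice.
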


As opposed to Theorem A, we make no assumptions on the singularities of the totally invariant algebraic subsets. The collection $\EE_\DS$ is the same one constructed by Dinh and Sibony in \cite{MR2468484} (see also \cite{MR2513537} for a different construction) and our improvement is based on approximating the current $S$ by currents that satisfy the conditions imposed by Dinh and Sibony in their theorem.\\

As an immediate consequence of Theorem C, we can extend Guedj's result

\begin{corD}\label{corD}
Let $f:\P^k\to\P^k$ be a holomorphic map of degree $d\geq2$. There exist a totally invariant proper algebraic subset $E\subset\P^k$ with the following property: given any positive closed (1,1)-current $S$ such that $\nu(S,x)=0$ for all $x\in E$ we have
$$d^{-n}(f^n)^*S\rightarrow T_f$$ as $n\to+\infty$ in the sense of currents, where $\nu(S,x)$ denotes the Lelong number of $S$ at $x$.
\end{corD}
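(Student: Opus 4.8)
The plan is to deduce Corollary D from Theorem C by showing that the hypothesis on Lelong numbers is strong enough to force the local potentials of $S$ to be non-trivial on every element of the Dinh--Sibony family $\EE_\DS$, after which Theorem C applies directly. First I would set $E=\bigcup_{Y\in\EE_\DS}Y$, which is a totally invariant proper algebraic subset since $\EE_\DS$ is a finite collection of such subsets and none of them is all of $\P^k$ (the Green current has no mass on algebraic sets, so no totally invariant subset in this family can be the whole space). The claim is then that if $\nu(S,x)=0$ for every $x\in E$, then the local potential of $S$ is not identically $-\infty$ on any $Y\in\EE_\DS$; granting this, Theorem C gives $d^{-n}(f^n)^*S\to T_f$.

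Next I would justify the implication ``vanishing Lelong numbers along $Y$'' $\Rightarrow$ ``potential not $\equiv-\infty$ on $Y$''. Write $S=dd^c u+\alpha$ locally, with $u$ a quasi-psh potential and $\alpha$ smooth; the Lelong number $\nu(S,x)$ coincides with the Lelong number $\nu(u,x)$ of the psh function $u$ at $x$. The key point is a classical upper semicontinuity/comparison estimate: if $\nu(u,x)=0$ for every point of a subvariety $Y$, then $u$ restricted to $Y$ cannot be identically $-\infty$. Indeed, if $u|_Y\equiv-\infty$ on some component, then by the Siu semicontinuity theorem the set $\{x: \nu(u,x)\geq c\}$ is analytic for each $c>0$, and one shows (using e.g. the restriction of $u$ to a generic complex line or disc meeting $Y$, or the sub-mean-value inequality along $Y$) that the generic Lelong number of $u$ along $Y$ must be strictly positive — contradicting $\nu(S,x)=0$ on $Y$. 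Alternatively, one can invoke that a psh function which is $-\infty$ on a variety $Y$ of codimension $p$ has Lelong number at least some positive constant along $Y$ (this is exactly the kind of statement underlying Guedj's and Dinh--Sibony's arguments). This gives the contrapositive of what we need.

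With both pieces in hand, the proof is short: assume $\nu(S,x)=0$ for all $x\in E$; fix any $Y\in\EE_\DS$; then $Y\subset E$, so $\nu(S,x)=0$ for all $x\in Y$, hence by the previous paragraph the local potentials of $S$ are not identically $-\infty$ on $Y$; since $Y$ was arbitrary, $S$ satisfies the hypothesis of Theorem C, and therefore $d^{-n}(f^n)^*S\to T_f$ as $n\to+\infty$. I would remark that this recovers Guedj's theorem (the case where $\nu(S,x)=0$ everywhere) and sharpens it: one only needs the Lelong numbers to vanish on the proper algebraic set $E$, not on all of $\P^k$.

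The main obstacle is the potential-theoretic lemma in the middle step — precisely pinning down the implication ``$\nu(S,\cdot)\equiv0$ on $Y$'' $\Rightarrow$ ``potential $\not\equiv-\infty$ on $Y$'' with the right quantitative control. I expect this to follow from standard pluripotential theory (Siu's semicontinuity theorem together with the behaviour of Lelong numbers under restriction to subvarieties, or a direct sub-mean-value argument), but care is needed when $Y$ is singular: one should work on a resolution of $Y$, or restrict to the smooth locus and use that the singular locus has measure zero in $Y$ and that psh functions are determined by their values off a pluripolar set. None of this requires new ideas beyond what is already implicit in the cited work of Guedj and of Dinh--Sibony, so the corollary is genuinely immediate once Theorem C is available.
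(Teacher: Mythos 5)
Your reduction to Theorem C is the right instinct, but the middle step is broken in two ways. First, you have misquoted the hypothesis of Theorem C: it asks that $S$ put \emph{no mass} on the elements of $\EE_\DS$, not that its local potentials be $\not\equiv-\infty$ there — the latter is the hypothesis of Dinh--Sibony's original theorem, which Theorem C is precisely designed to weaken. Second, and more seriously, the lemma you rely on (``$\nu(S,x)=0$ at every point of $Y$ implies the potential is not identically $-\infty$ on $Y$'') is false: a psh function can be $-\infty$ on an analytic set while having zero Lelong numbers everywhere. For example $u(z)=-\sqrt{-\log|z_1|}$ is psh near $\{z_1=0\}$, is identically $-\infty$ there, and has $\nu(u,x)=0$ at every point; composing the $\omega$-psh function $\log\bigl(|z_1|/\|z\|\bigr)$ with the convex increasing function $t\mapsto-\sqrt{-t}$ produces a global quasi-psh analogue on $\P^k$. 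So ``potential $\equiv-\infty$ on $Y$'' does not force a positive (generic) Lelong number along $Y$, and the contradiction you sketch via Siu semicontinuity or sub-mean-value arguments cannot be repaired. Indeed, if your lemma were true, Corollary D would already follow from Dinh--Sibony's original theorem and Theorem C would carry no new content.

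The fix inside your own strategy is to verify the hypothesis Theorem C actually has: with $E=\bigcup_{Y\in\EE_\DS}Y$, vanishing of $\nu(S,\cdot)$ on $Y$ gives generic Lelong number zero along $Y$, hence by the Siu decomposition $S$ charges no hypersurface element of $\EE_\DS$, while a positive closed $(1,1)$-current never charges analytic sets of codimension at least two; thus $S$ has no mass on any element of $\EE_\DS$ and Theorem C applies. That is the one-line deduction the introduction alludes to. Be aware, though, that the paper's written proof of Corollary D is different and independent of Dinh--Sibony: it takes $E$ to be the algebraic totally invariant set of Theorem \ref{algebraictotinv} applied to $f$ on $\P^k$, uses Theorem \ref{asymptotic1} to get $\sup_{x\notin E}\mu_n(x)\leq C\rho^n$ with $\rho<d$, bounds $\nu\left(d^{-n}(f^n)^*S,x\right)$ by $d^{-n}\mu_n(x)\,\nu\left(S,f^n(x)\right)$ via Theorem \ref{cocycleineq1} (zero for $x\in E$ by total invariance, and $\leq C(\rho/d)^n$ off $E$), and concludes with Guedj's criterion \eqref{Guedjcharact}.
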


The proof of Corollary D follows by taking $E$ as the union of the elements of $\EE_\DS$ (which is algebraic). On the other hand, the tools introduced in this paper will allow us to present a direct proof of Corollary D.\\

In order to verify equidistribution, we will use a characterization due to V. Guedj (see \cite{MR2017143}, Theorem 1.4) which states that it is enough to test the asymptotic behavior of Lelong numbers. More precisely, he proved the equivalence

\begin{equation}\label{Guedjcharact}
d^{-n}(f^n)^*S\to T_f\Longleftrightarrow \sup_{x\in\P^k}\nu\left(d^{-n}(f^n)^*S,x\right)\to0.
\end{equation}\Mpe{Guedjcharact}

Our approach uses a mixture of analytic tools in order to control the asymptotic behavior of Lelong numbers and verify Guedj's condition \eqref{Guedjcharact}.\\

In particular, we use a technique due to J. P. Demailly of approximation of currents by currents with analytic singularities. As a consequence, we are able to reduce the general problem for positive closed (1,1)-currents to the case of the currents of integration of a suitable hypersurface. Applying Dinh-Sibony's result to this hypersurface allows us to obtain Theorem C.\\

For the proof of Theorem A, we proceed as in \cite{MR2032940} and study the Lelong numbers of $d^{-n}[\CC_{f^n}]$, where $\CC_{f^n}$ denotes the critical set of $f^n$. We prove that the (totally invariant) set

$$E:=\bigcup_{\delta>0}\bigcap_{n\in\N}\left\{x\in\P^k\mid\ord_x(\CC_{f^n})\geq\delta d^n\right\}$$ is algebraic, using a refined version, used in \cite{MR2468484}, of certain self-intersection inequalities \'{a} la Demailly.\\

We then proceed inductively on the irreducible components $X\subset E$. The main problem that arises, is that $X$ might be too singular, hence, making sense of the critical set of $f|_X$ is hard. Our assumptions on the singular locus of $X$ will let us get around this problem.\\


{\bf Acknowledments:} I want to thank my advisor Mattias Jonsson for all the help and restless support through the preparation of this article, explaining and correcting many of the ideas exposed here.\\

I would also like to take the opportunity to thank professors Sebastian Bouckson, Charles Favre, Tien-Cuong Dinh and Nessim Sibony for their invaluable explanations and comments. Interacting with all of them -- in Paris and Ann Arbor -- has been of extreme importance for my understanding of the subject.


\section{Background}

Throughout this paper $f:\P^k\to\P^k$ will denote a holomorphic map of degree $d\geq2$, i.e. the map $f$ is given by a $(k+1)$-tuple of homogeneous polynomials of degree $d\geq2$ in $k+1$ variables, having $(0,\ldots,0)\in\C^{k+1}$ as the only common root. For a detailed discussion on the properties of holomorphic (and meromorphic) dynamics on projective spaces we refer the reader to \cite{SibSurvey}.\\

Denoting by $\omega$ the Fubini-Study metric on $\P^k$ and using that $H^2_{\text{DR}}(\P^k;\R)$  is generated by $\{\omega\}$, we see that there exists a smooth function $u:\P^k\to\R$ such that

\begin{equation*}\label{cohom1}
d^{-1}f^*\omega=\omega+dd^cu,
\end{equation*} where $dd^c=\frac{\sqrt{-1}}{2\pi}\partial\bar\partial$. In particular, for every $n\in\N$ we observe that

\begin{equation*}\label{eq1}
d^{-n}(f^n)^*\omega=\omega+dd^cu_n,
\end{equation*} where $u_n:=\sum_{i=0}^{n-1}d^{-i}u\circ f^i$ is a sequence of smooth functions on $\P^k$. Since $u$ is bounded, it follows that the series defining $u_n$ converges uniformly to a - continuous - function $g_f$ on $\P^k$. Hence, the sequence of smooth forms $\omega+dd^cu_n$ converges in the sense of currents to the positive closed (1,1)-current

\begin{equation*}\label{greencurrent1}
T_f:=\omega+dd^cg_f.
\end{equation*}

The current $T_f$ is called the \emph{Green current} associated to $f$ and it plays a central role for the understanding of the dynamics given by the map $f$. It is invariant (i.e. $f^*T_f=dT_f$), it has Lelong numbers zero everywhere and its support equals the Julia set of $f$ (\cite{MR1285389}, \cite{HubbardPapadopol}).\\

As we stated at the introduction, given a positive closed (1,1)-current $S$ we can verify equidistribution (i.e. $d^{-n}(f^n)^*S$ converges to $T_f$ in the sense of currents) if the sequence $\sup_{x\in\P^k}\nu\left(d^{-n}(f^n)^*S,x\right)$ converges to 0 \cite{MR2017143}. We will show that there is a link between equidistribution and the appearence of certain exceptional sets.\\

It will be convenient to replace iterates $f^s$ by $f$. For this we need

\begin{lem}\label{wlog}
Let $S$ be a positive closed (1,1)-current of mass 1. Then the following are equivalent

\begin{itemize}
\item[(i)] $d^{-n}(f^n)^*S\to T_f$;
\item[(ii)] $d^{-ns}(f^{ns})^*S\to T_f$ for some $s\geq1$.
\end{itemize}
\end{lem}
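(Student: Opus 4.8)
The plan is to prove the equivalence by establishing both implications, with the nontrivial direction being (ii) $\Rightarrow$ (i).

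\textbf{Direction (i) $\Rightarrow$ (ii).} This is immediate: if $d^{-n}(f^n)^*S\to T_f$ for the full sequence $n\to+\infty$, then in particular the subsequence $n=ms$, $m\to+\infty$, converges to $T_f$, which is exactly statement (ii) with that value of $s$. So there is nothing to do here beyond observing that (ii) is a statement about one particular $s$ and picking $s=1$, or any $s$, from (i).

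\textbf{Direction (ii) $\Rightarrow$ (i).} Here I would use Guedj's characterization \eqref{Guedjcharact}: equidistribution for $f$ is equivalent to $\sup_{x\in\P^k}\nu\big(d^{-n}(f^n)^*S,x\big)\to0$ as $n\to+\infty$. Assume (ii) holds for some $s\geq1$; applying \eqref{Guedjcharact} to the map $g:=f^s$ (which is holomorphic of degree $d^s\geq2$ and has the same Green current $T_g=T_f$, since $g^{-m}(f^{ms})^*\to T_f$ forces $T_g=T_f$ by uniqueness), statement (ii) is equivalent to
\begin{equation*}
\sup_{x\in\P^k}\nu\big(d^{-ms}(f^{ms})^*S,x\big)\longrightarrow0\qquad(m\to+\infty).
\end{equation*}
Now write an arbitrary $n\in\N$ as $n=ms+r$ with $0\leq r<s$. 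Then $(f^n)^*S=(f^{ms})^*(f^r)^*S$, so
\begin{equation*}
d^{-n}(f^n)^*S=d^{-ms}(f^{ms})^*\big(d^{-r}(f^r)^*S\big).
\end{equation*}
The current $S_r:=d^{-r}(f^r)^*S$ is a positive closed $(1,1)$-current of mass $1$ (pull-back by $f$ multiplies mass by $d$, and we divide by $d^r$), and there are only finitely many such $S_r$ (one for each $r\in\{0,1,\dots,s-1\}$). The key step is then to bound $\sup_x\nu(d^{-n}(f^n)^*S,x)$ in terms of $\sup_x\nu(d^{-ms}(f^{ms})^*S_r,x)$. Since $\nu(d^{-r}(f^r)^*S,x)\le d^r\,\nu(S,\cdot)$ in an appropriate sense — more precisely, pulling back by a finite holomorphic map multiplies Lelong numbers by a controlled local factor bounded by the local multiplicity of $f^r$, which is uniformly bounded on $\P^k$, say by a constant $C_r$ — we get that if the condition $\sup_x\nu(d^{-ms}(f^{ms})^*T,x)\to0$ holds for $T=S$ it also holds for $T=S_r$ (because $S_r$ differs from $S$ only through a fixed finite map whose effect on Lelong numbers is uniformly bounded, and $d^{-ms}(f^{ms})^* = d^{-ms}(f^{ms})^*$ with the outer Lelong numbers controlled similarly). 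Hence each of the finitely many subsequences $\{n\equiv r\ (\mathrm{mod}\ s)\}$ has $\sup_x\nu(d^{-n}(f^n)^*S,x)\to0$, and taking the maximum over the finitely many residues $r$ gives $\sup_x\nu(d^{-n}(f^n)^*S,x)\to0$ along the full sequence. By \eqref{Guedjcharact} applied to $f$ this yields $d^{-n}(f^n)^*S\to T_f$, which is (i).

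\textbf{Main obstacle.} The delicate point is the bookkeeping with Lelong numbers under pull-back by the fixed iterate $f^r$: one must check that condition (ii) for $S$ really does transfer to condition (ii) for each $S_r=d^{-r}(f^r)^*S$, using that $f^r$ is a finite holomorphic map (so $\nu(g^*S,x)$ is comparable to $\nu(S,g(x))$ up to the local multiplicity of $g$ at $x$, uniformly bounded on the compact $\P^k$), and that this bounded distortion is harmless after dividing by $d^{ms}$ and letting $m\to\infty$. An alternative, perhaps cleaner route that avoids delicate Lelong-number estimates: note that for each fixed $r$, the sequence $m\mapsto d^{-ms}(f^{ms})^*S_r$ converges to $T_f$ by exactly the same argument that gives (ii) — indeed (ii) applied with the current $S_r$ in place of $S$ (the hypothesis is stated for the given $S$, but one can first prove (ii) $\Rightarrow$ ``(ii) holds for $d^{-r}(f^r)^*S$ as well'' directly from $d^{-ms}(f^{ms})^*(d^{-r}(f^r)^*S)=d^{r}\cdot d^{-(ms+r)}(f^{ms+r})^*S \cdot d^{-r}$, a subsequence of the object in (i) once one knows (i)... which is circular) — so in practice the Lelong-number bound is the honest way through, and that uniform local-multiplicity estimate for $f^r$ is the step I expect to require the most care.
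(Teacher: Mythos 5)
Your (i) $\Rightarrow$ (ii) matches the paper; for (ii) $\Rightarrow$ (i) you take a genuinely different and noticeably heavier route. The paper's proof is soft: writing $n=ms+l$ with $0\le l<s$, it applies the fixed operator $d^{-l}(f^l)^*$, which is continuous on positive closed $(1,1)$-currents, to the convergent sequence $d^{-ms}(f^{ms})^*S\to T_f$, so that along each residue class $d^{-n}(f^n)^*S\to d^{-l}(f^l)^*T_f$, and then uses the invariance $f^*T_f=dT_f$ to conclude $d^{-l}(f^l)^*T_f=T_f$; the finitely many residue classes give the full sequence. Your route goes through Guedj's characterization \eqref{Guedjcharact} for both $f$ and $f^s$ plus a Lelong-number comparison for pull-back by the fixed finite map $f^r$. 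This can be made to work, but two points need to be pinned down: (a) the clean way to use the distortion bound is via the commutation $d^{-ms}(f^{ms})^*\bigl(d^{-r}(f^r)^*S\bigr)=d^{-r}(f^r)^*\bigl(d^{-ms}(f^{ms})^*S\bigr)$, which puts the fixed map on the outside and gives directly $\nu\bigl(d^{-n}(f^n)^*S,x\bigr)\le d^{-r}C_r\,\nu\bigl(d^{-ms}(f^{ms})^*S,f^r(x)\bigr)$ — your phrasing about transferring hypothesis (ii) from $S$ to $S_r$ is this same computation, but as written it reads half-circular, as you yourself note; and (b) the uniform estimate $\nu\bigl((f^r)^*T,x\bigr)\le C_r\,\nu\bigl(T,f^r(x)\bigr)$ for all positive closed $T$ is asserted but not proved; it is true and standard (it is essentially the content of \cite{MR1724404}, and can also be extracted from Theorem \ref{cocycleineq1} together with the approximation technique of Section 5), so it should be cited rather than waved at. The trade-off is clear: the paper needs only continuity of $f^*$ on currents and the invariance of $T_f$, whereas your argument additionally invokes both implications of \eqref{Guedjcharact} and a Lelong-number distortion bound — more machinery for the same elementary lemma, though all of it is available in or near the paper, and your approach has the mild virtue of being quantitative.
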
\Mp{wlog}

\begin{proof}
Note that (i) $\Rightarrow$ (ii) follows immediately and that (ii) implies  $$d^{-n}(f^n)^*S\to d^{-l}\left(f^l\right)^*T_f$$ for some $l\in\{0,1,\ldots,s-1\}$. The $f$-invariance of $T_f$ gives $d^{-l}\left(f^l\right)^*T_f=T_f$.
\end{proof}


\subsection{Totally invariant algebraic sets}

A subset $X\subset\P^k$ is said to be \emph{totally invariant} if $f^{-s}(X)\subset X$ for some $s\geq1$. If $X\subset\P^k$ is an irreducible algebraic totally invariant set, then it follows that $f^{-s}(X)=X$. Moreover, if $X$ is a totally invariant algebraic set of codimension $p$ in $\P^k$ then the holomorphic map

$$g:=f^s|_X:X\to X$$ has topological degree $d^{sp}$ and $(f^s)^*[X]=d^{sp}[X]$, where $[X]$ denotes the current of integration of $X$.\\

A crucial property of totally invariant algebraic subsets of $\P^k$ is the following (non-trivial) well known fact

\begin{thm}\label{totallyinvariantsetsalgebraic}
The collection of all proper totally invariant algebraic subsets of $\P^k$ is finite.
\end{thm}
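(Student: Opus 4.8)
The plan is to bound the degree of every proper totally invariant algebraic subset of $\P^k$. For hypersurfaces this immediately gives finiteness, since distinct irreducible hypersurfaces each contribute at least $1$ to the degree of their union; the totally invariant subsets of higher codimension will then be disposed of by a downward induction on dimension. So the problem splits into two parts: a degree bound for totally invariant hypersurfaces, which is the heart of the matter, and an inductive reduction of the higher-codimension (and zero-dimensional) case to it.

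First I would reduce to a reduced hypersurface $V$ with $f^{-1}(V)=V$. If $X$ is a proper totally invariant irreducible hypersurface of minimal period $s$, then surjectivity of $f$ yields $f^i(X)=f^{-(s-i)}(X)$ for $0\le i\le s$, so the finite union $\bigcup_{i=0}^{s-1}f^i(X)$ is reduced, totally invariant, and satisfies $f^{-1}\bigl(\bigcup_i f^i(X)\bigr)=\bigcup_i f^i(X)$. Hence a degree bound for invariant hypersurfaces with $f^{-1}(V)=V$ controls both $\deg X$ and the period $s$ at once.

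Next, for a reduced $V=\bigcup_j V_j$ with $f^{-1}(V)=V$, I would write $f^*[V]=\sum_j m_j[V_j]$ with integers $m_j\ge 1$ (every $V_j$ occurs since $f^{-1}(V)=V$, and no other component occurs since $f^*[V]$ is supported on $f^{-1}(V)$). Comparing degrees gives $\sum_j(m_j-1)\deg V_j=(d-1)\sum_j\deg V_j>0$, so some $m_{j_0}\ge 2$. A local computation at a generic point of $V_{j_0}$ — where $f$ carries a defining function of the target component to the $m_{j_0}$-th power of a defining function of $V_{j_0}$ times a unit, so that the normal derivative of $f$ degenerates — shows $V_{j_0}\subset\CC_f=\{\Jac f=0\}$, a hypersurface of degree $(k+1)(d-1)$. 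Using that $f$ permutes the irreducible components of $V$, iterating, and invoking $\CC_{f^n}=\bigcup_{j=0}^{n-1}f^{-j}(\CC_f)$, one propagates this to the whole of $V$, and then — and this is the technical heart — one must extract from the normal multiplicities $m_j$ along the finitely many forward-orbit cycles of components an honest bound $\deg V\le (k+1)(d-1)$; alternatively one may invoke the sharper estimate $\deg V\le k+1$ of Cerveau and Lins Neto, proved via logarithmic $1$-forms on $\P^k\setminus V$.

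For a totally invariant irreducible $X$ of codimension $p\ge 2$ with $f^{-1}(X)=X$ one has $f^*[X]=d^p[X]$, and since $d^p\ge d^2>1$ the analogous local computation at a generic smooth point of $X$ forces $\Jac f$ to vanish along $X$, so $X\subset\CC_f$; slicing $\CC_f$ by a generic hyperplane, inducting on $\dim X$, and using the first-paragraph reduction then leaves only finitely many candidates in each dimension, including the zero-dimensional (exceptional point) case. The step I expect to be the real obstacle is the degree bound for the invariant hypersurface $V$: the crude inclusion $V\subset\bigcup_{m\ge 0}f^{-m}(\CC_f)$ is cheap but does not a priori bound $\deg V$, so one must either track the normal multiplicities carefully around the orbit cycles or bring in the logarithmic-form machinery; once $\deg V$ (hence the period) is bounded, the remaining bookkeeping is routine.
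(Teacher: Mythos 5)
You should know at the outset that the paper does not prove this theorem at all: it cites Dinh--Sibony \cite{MR2468484} (and Forn{\ae}ss--Sibony \cite{MR1285389} for $k=2$), so the only question is whether your sketch stands on its own. Its hypersurface half essentially does, and the step you flag as the ``technical heart'' closes in two lines: since $f$ permutes the components $V_j$ of $V$, write $f^*[V_j]=m_j[V_{\sigma^{-1}(j)}]$, so that $m_j\deg V_{\sigma^{-1}(j)}=d\deg V_j$; at a generic point of $V_{\sigma^{-1}(j)}$ the map is transversally $z\mapsto z^{m_j}\cdot(\mathrm{unit})$, so $\ord_{V_{\sigma^{-1}(j)}}(\CC_f)\geq m_j-1$, and summing over the distinct components gives $(d-1)\deg V=\sum_j(m_j-1)\deg V_{\sigma^{-1}(j)}\leq\deg\CC_f=(k+1)(d-1)$, i.e.\ $\deg V\leq k+1$. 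Applied to the union of orbits of any finite set of distinct totally invariant irreducible hypersurfaces, this bounds their number (and the periods) by $k+1$, so no appeal to Cerveau--Lins Neto is needed.

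The genuine gap is your last paragraph. From $f^*[X]=d^p[X]$ you do get $X\subset\CC_f$ at generic points, but containment in a fixed hypersurface of degree $(k+1)(d-1)$ gives no control whatsoever on the number, the degrees, or the periods of totally invariant subvarieties of codimension at least $2$: a surface can contain infinitely many curves, so ``finitely many candidates in each dimension'' does not follow. Moreover, slicing $\CC_f$ by a generic hyperplane destroys invariance -- the dynamics does not restrict to the slice -- so no induction on $\dim X$ is actually set up, and the zero-dimensional case (totally invariant points, whose periods are not a priori bounded) is untouched by any degree bound on hypersurfaces. This higher-codimension finiteness is precisely the nontrivial content of the theorem in \cite{MR2468484}, where it is obtained by a genuinely different mechanism (induction on minimal totally invariant sets and the dynamics induced on them, with multiplicity estimates for $f^n$ rather than for $f$ alone). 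As written, your proposal proves finiteness only for the totally invariant hypersurfaces.
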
\Mp{totallyinvariantsetsalgebraic}

This says that the collection of all proper totally invaraint algebraic subsets of $\P^k$ is finite. A proofs of Theorem \ref{totallyinvariantsetsalgebraic} can be found in \cite{MR2468484} (see also \cite{MR1285389} for the case $k=2$).\\

For the more general situation $g:X\to X$ where $g$ is a regular map and $X$ is a projective variety, the same conclusion can be derived from the work of Dinh-Sibony in \cite{MR2468484}, giving us the more useful result: Let $g:X\to X$ be a regular self-map of a projective variety $X$. Then, the collection of all proper totally invariant subsets of $X$ is finite.


\section{Orders of vanishing}

In this section we discuss orders of vanishing on algebraic varieties, where a major difficulty will be to deal with the singular locus. The standard references \cite{MR1492525} and \cite{KollarMori} (see also \cite{ClemensKollarMori88}) contain a details discussion of all the concepts discussed in this section.\\

Let $X$ be an irreducible projective normal variety of dimension $k$, and assume $X$ to be $\Q$-factorial, i.e.\ every Weil divisor is $\Q$-Cartier. If $\pi:Y\to X$ is a birational morphism, we say that $E\subset Y$ is a \emph{divisor over $X$} if $E$ is a smooth prime divisor in $Y$. We say that $E$ \emph{lies over a point $x\in X$} if $\pi(E)=\{x\}$.\\

If $\phi$ is a rational function on $X$ and $E$ a divisor over $X$, we write $\ord_E(\phi)$ for the order of vanishing of $\phi\circ\pi$ along the divisor $E$. Similarly, if $D$ is a Weil divisor on $X$, we set $\ord_E(D):=\frac1{m}\ord_E\left(\pi^*(mD)\right)$ where $m\in\N$ is chosen so that $mD$ is Cartier. As usual, we denote by $K_X$ the canonical divisor class of $X$.\\

Above, we may assume that $\pi:Y\to X$ is a \emph{log-resolution} of $X$, i.e.\ the exceptional locus $\Exc(\pi)$ of $\pi$ has simple normal crossing. In this case, there exists a unique divisor $K_{Y/X}$ on $Y$ supported on $\Exc(\pi)$, the \emph{relative canonical divisor}, which is in the divisor class of $K_Y-\pi^*K_X$. If both $X$ and $Y$ are smooth, then $K_{Y/X}$ is nothing but the effective divisor defined by the Jacobian $\Jac(\pi)$ of $\pi$.\\

Given a prime divisor $E$ over $X$, we define the \emph{log-discrepancy} $a_E$ of $E$ by

\begin{equation}\label{deflogdiscrepancy}
a_E:=\ord_E\left(K_{Y/X}\right)+1.
\end{equation}\Mpe{deflogdiscrepancy}

We say that $X$ is klt (short for Kawamata log-terminal) if $a_E>0$ for every prime divisor $E$ over $X$.\\

Let us give a simple example: Let $\pi:Y\to X$ be the blow-up of $X$ at a smooth point $0\in X$. We can choose local coordinates $(x_1,\ldots,x_k)$ so that our map $\pi$ can be written as

$$\pi(x_1,\ldots,x_k)=(x_1x_2,\ldots,x_{k-1}x_k,x_k),$$ giving us that $\Jac(\pi)=x_k^{k-1}$. Hence, if $E=\pi^{-1}(0)$ is the exceptional prime divisor above $0$, we have that

\begin{equation}\label{logdisc1}
a_E=\ord_E\left(\Jac(\pi)\right)+1=k.
\end{equation}\Mpe{logdisc1}

Recall that if $x\in X$ and $\phi\in\OO_{X,x}$, the order of vanishing $\ord_x(\phi)$ of $\phi$ at $x$ is defined to be 

$$\ord_x(\phi):=\max\left\{s\in\N\mid \phi\in\mmm_x^s\right\}$$ where $\mmm_x$ denotes the maximal ideal of the local ring $\OO_{X,x}$. If $x$ is smooth and $E$ is an exceptional divisor of a single blowup (at $x$), then $\ord_x(\phi)$ and $\ord_E(\phi)$ coincide. However, when $x\in X$ is singular, $\ord_x$ may not be a valuation.\\

The following well known lemma will play an important role in this paper

\begin{lem}\label{tougeronlemma}
Let $E$ be a prime divisor over $X$ above a smooth point $x\in X$. Then

$$\ord_E(D)\leq a_E\ord_x(D)$$ for every divisor $D\subset X$.
\end{lem}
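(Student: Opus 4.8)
The plan is to prove a slightly more flexible statement by induction and recover Lemma~\ref{tougeronlemma} as the case where the center is a point. Since $\ord_E(D)$, $a_E$ and $\ord_x(D)$ depend only on an arbitrarily small neighborhood of $x$, I would first replace $X$ by a smooth neighborhood of $x$, so that we may assume $X$ is smooth. More generally I would prove: if $W\subset X$ is an irreducible subvariety with $X$ smooth along the generic point $\eta_W$ and $E$ is a prime divisor over $X$ with center $c_X(E)=W$, then $\ord_E(D)\le a_E\,\mult_W(D)$ for every divisor $D$, where $\mult_W(D)$ denotes the order of vanishing of $D$ at $\eta_W$; the lemma is the case $W=\{x\}$, for which $\mult_W=\ord_x$. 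After localizing at $\eta_W$ we may assume $X=\operatorname{Spec}R$ for a regular local ring $(R,\mmm)$ of dimension $c$ with $W=\{\mmm\}$ the closed point, so that $\mult_W=\ord_\mmm$.

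The argument is by induction on the positive integer $a_E$. The base case is the one where $E$ is the exceptional divisor of the blow-up $\mu\colon X_1\to X$ of the smooth point $\mmm$: then $\ord_E=\ord_\mmm$ and, exactly as in the computation \eqref{logdisc1}, $a_E=c\ge 1$, so the inequality $\ord_E(D)=\ord_\mmm(D)\le c\,\ord_\mmm(D)$ is immediate. In particular, when $a_E=1$ the inductive step below forces $E$ to be this exceptional divisor, so the induction is well founded.

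For the inductive step, suppose $E$ is not the exceptional divisor $F$ of $\mu\colon X_1\to X$. Since $c_X(E)=\{\mmm\}$ and $\mu$ is an isomorphism over $X\setminus\{\mmm\}$, the center $Z_1:=c_{X_1}(E)$ lies in $F$, so $b:=\ord_E(F)\ge 1$; also $c\ge 2$. Enlarging the resolution of $E$, I may assume it dominates $X_1$. From $K_{X_1/X}=(c-1)F$ (the higher-dimensional form of \eqref{logdisc1}) and additivity of relative canonical divisors I get $a_E=a_E'+(c-1)b$, where $a_E'$ is the log-discrepancy of $E$ over $X_1$, and in particular $a_E'<a_E$. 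Writing $m:=\ord_\mmm(D)$, the total transform is $\mu^*D=D_1+mF$ with $D_1$ the strict transform, hence $\ord_E(D)=\ord_E(D_1)+mb$. The inductive hypothesis, applied to $D_1$, to $E$ viewed as a divisor over $X_1$, and to its center $Z_1$ (legitimate since $a_E'<a_E$ and $X_1$ is smooth), gives $\ord_E(D_1)\le a_E'\,\mult_{Z_1}(D_1)$. Combining this with the fact that multiplicity does not increase under blowing up a smooth point, i.e. $\mult_{Z_1}(D_1)\le\mult_\mmm(D)=m$, I obtain
\[
\ord_E(D)=\ord_E(D_1)+mb\le a_E'm+mb=(a_E'+b)m\le\big(a_E'+(c-1)b\big)m=a_E\,m,
\]
the last inequality using $c\ge 2$ and $b\ge 1$. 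This closes the induction.

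\textbf{Main obstacle.} The delicate point is that after blowing up $x$ the center $Z_1$ of $E$ need no longer be a point and may even be singular; this is precisely why the statement is formulated for an arbitrary center $W$ and why one localizes at $\eta_W$ to return to a regular local ring whose closed point is the center. The only external input I would cite is the elementary fact that the multiplicity of the strict transform at a point of the exceptional divisor is at most the multiplicity of the original divisor at the smooth point being blown up; everything else is the bookkeeping of relative canonical divisors and total transforms carried out above.
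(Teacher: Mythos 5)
Your argument is correct, but note that the paper does not actually prove Lemma \ref{tougeronlemma}: it only cites Tougeron (\cite{MR0440598}, p.~178, Lemma 1.3), so your proof is necessarily a different route from the one the paper relies on. The classical argument behind the citation is differential: writing $D=\divi(\phi)$ near the smooth point with $m=\ord_x(\phi)$, some $m$-th order partial derivative of $\phi$ is a unit at $x$, and the relation $\nabla(\phi\circ\pi)=(D\pi)^{T}\,(\nabla\phi\circ\pi)$, inverted via the adjugate and $\Jac(\pi)$, shows each differentiation lowers $\ord_E$ by at most $a_E$; hence $0\geq\ord_E(\phi)-m\,a_E$. Your proof instead is an algebraic induction on the log-discrepancy: blow up the point, use $K_{X_1/X}=(c-1)F$ and additivity to get $a_E=a'_E+(c-1)\ord_E(F)$, split $\mu^*D=D_1+mF$, and invoke the classical fact that the multiplicity of the strict transform along any subvariety of the exceptional divisor is at most $\ord_x(D)$. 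This is a legitimate and self-contained alternative; what it buys is a purely valuative/birational proof (and the natural generalization to centers $W$ of any codimension along which $X$ is smooth), at the cost of importing the non-increase of multiplicity under point blow-ups, which itself needs a short Taylor-expansion argument. Two small points of bookkeeping you should tighten: the comparison $\mult_{Z_1}(D_1)\leq m$ is a statement at the generic point of $Z_1$, so you should say explicitly that it follows from the bound at a general closed point of $Z_1$ (upper semicontinuity of multiplicity); and the remark that ``$a_E=1$ forces $E$ to be the exceptional divisor'' is slightly off --- if $a_E=1$ the center must have codimension $c=1$, where there is no exceptional divisor and the inequality is the trivial equality $\ord_E=\ord_\mmm$, so the base of the induction should be stated as ``$c=1$, or $E$ the divisor of the point blow-up.'' Neither issue affects the validity of the proof.
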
\Mp{tougeronlemma}

The proof of Lemma \ref{tougeronlemma} can be found in \cite{MR0440598} p.178 Lemma 1.3.\\

Let $g:X\to X$ be a surjective regular map. Then there exists a unique Weil divisor $\CC_g$ on $X$, the \emph{critical divisor}, whose restriction to $\Xreg\cap g^{-1}(\Xreg)$ equals the Cartier divisor

$$\left\{x\in\Xreg\cap g^{-1}(\Xreg)\mid\Jac(g)=0\right\}.$$ Since $X$ is $\Q$-factorial, $\CC_g$ is $\Q$-Cartier. It belongs to the divisor class of $g^*K_X-K_X$.\\

We need

\begin{lem}\label{valuationstuff1}
Let $\pi:Y\to X$ be a log-resolution and let $E\subset Y$ be a prime divisor. Then, there exists a log-resolution $\pi':Y'\to X$ and a prime divisor $E'\subset Y'$ such that the meromorphic lifting $\bar g:Y\dashrightarrow Y'$ satisfies $\bar g(E)=E'$.
\end{lem}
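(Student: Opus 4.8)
The plan is to set $g=f^s$ (so that $X$ is totally invariant, $g^{-1}(X)=X$, and $g|_X:X\to X$ is a surjective regular self-map) and then to realize the sought-after $Y'$ by resolving the natural candidate, namely the ``image'' configuration of $E$ under $g$. Concretely, given the log-resolution $\pi:Y\to X$ and the prime divisor $E\subset Y$, I would first dominate the situation by a common resolution: pick a log-resolution $\pi_0:Y_0\to X$ that factors through $Y$ and on which the induced meromorphic map to $X$ (via $g\circ\pi$) becomes a morphism $h:Y_0\to X$; equivalently $\pi_0$ resolves the indeterminacy of $g\circ\pi:Y\dashrightarrow X$. Let $\widetilde E\subset Y_0$ be the strict transform of $E$. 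Then $h(\widetilde E)$ is an irreducible closed subset of $X$, and the key point is to show it is again a divisor, i.e. of codimension one; call it $Z=h(\widetilde E)$.

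The heart of the argument is this codimension-one claim, and I expect it to be the main obstacle. The reason it should hold: $g|_X$ is finite (its topological degree is $d^{s\,\codim X}$, as recalled in the Background section), hence $g$ does not contract any subvariety of $X$, and pre/post-composing with the proper birational $\pi_0$ and $\pi$ cannot create a drop in dimension along a divisor except possibly over the indeterminacy/exceptional loci — but by choosing $\pi_0$ to be a log-resolution dominating $Y$ and resolving $g\circ\pi$, the generic point of $E$ (equivalently of $\widetilde E$) maps to the generic point of $\overline{\pi(E)}$'s image, and finiteness of $g$ keeps this codimension one. More carefully: over the generic point of $E$, the map $\pi$ is an isomorphism onto a neighborhood of the generic point of the center $\pi(E)\subset X$ (if $E$ is not $\pi$-exceptional) or $E$ dominates $\pi(E)$; in either case $g$ sends this center to an irreducible subset of the same dimension, and then one resolves $X$ again to extract a prime divisor $E'$ over $X$ whose center is $Z$ — or, if $Z$ itself has codimension one in $X$, takes $E'$ to be (the strict transform of) $Z$. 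Either way $E'$ is a prime divisor over $X$, and by construction the meromorphic lift $\bar g:Y\dashrightarrow Y'$ sends $E$ onto $E'$.

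To finish, I would take $\pi':Y'\to X$ to be a log-resolution of $X$ that dominates the blow-up of $X$ along $Z$ (or any model where $Z$ is extracted as a smooth prime divisor $E'$), chosen also to dominate $h(Y_0)$ so that the composite $Y_0\to X\dashrightarrow Y'$ is a morphism carrying $\widetilde E$ onto $E'$. Since $Y_0$ dominates $Y$, the resulting rational map $Y\dashrightarrow Y'$ is exactly the meromorphic lifting $\bar g$ of $g$, and chasing generic points through $Y_0$ gives $\bar g(E)=E'$ as required. The only genuine subtlety — worth stating as a small sub-lemma — is the non-contraction fact: a finite surjective $g:X\to X$ together with proper birational modifications cannot send a prime divisor to something of codimension $\geq 2$; this follows from the finiteness of $g$ on $X$ and the fact that $\pi,\pi',\pi_0$ are isomorphisms over dense opens, so one reduces to the statement over the smooth loci where it is classical.
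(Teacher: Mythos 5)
There is a genuine gap at the central step of your argument. The claim that $Z=h(\widetilde E)\subset X$ has codimension one is false precisely in the case the paper needs: $E$ may be $\pi$-exceptional (in the application of this lemma, $\pi$ is the blow-up at a point $x$ and $E=\pi^{-1}(x)$), and then $h(\widetilde E)=g(\pi(E))=\{g(x)\}$ is a point, of codimension $k\geq2$; note also that $g\circ\pi$ is already a morphism, so no indeterminacy is being resolved there. The lemma does not assert that the center of $E'$ in $X$ is a divisor --- in the application $E'$ lies over the point $g(x)$ --- so your ``non-contraction'' sub-lemma, read at the level of images in $X$, is both false and beside the point. Read instead at the level of models (``for a suitable log-resolution $Y'$ the image $\bar g(E)$ is a divisor in $Y'$''), it is not a small sub-lemma reducible to the smooth locus: over the dense opens where $\pi,\pi',\pi_0$ are isomorphisms an exceptional $E$ is not even visible, so nothing classical applies. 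And when you fall back on ``extract a prime divisor $E'$ over $X$ whose center is $Z$,'' nothing in your construction guarantees $\bar g(E)=E'$: there are many divisorial (and non-divisorial) valuations centered at $Z$, and the whole content of the lemma is that the specific pushforward $g_*\ord_E$ is divisorial, i.e.\ that $E$ is not contracted on every model over $X$ and determines one particular $E'$. The sentence ``by construction the meromorphic lift sends $E$ onto $E'$'' assumes exactly what is to be proved.

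The paper supplies the missing ingredient valuation-theoretically: set $\nu=\ord_E$ and $\nu'=g_*\nu$; then $\Gamma_{\nu'}\subset\Gamma_{\nu}=\Z$ forces $\Gamma_{\nu'}=r\Z$, and since $g$ is finite the residue field $\K(\nu)$ is a finite extension of $\K(\nu')$, so $\K(\nu')$ has transcendence degree $\dim(X)-1$ over $\C$; by the characterization of divisorial valuations (Vaqui\'e, Prop.\ 10.1, essentially Zariski--Abhyankar) this gives $\nu'=r\ord_{E'}$ for a prime divisor $E'$ over $X$, and one then takes any log-resolution $Y'$ on which $E'$ appears. If you want a geometric proof, you must at minimum show that the restriction of a divisorial valuation along the finite extension $g^*\C(X)\subset\C(X)$ is again divisorial; your dominating-model constructions do not address this, because the contraction to be ruled out occurs on the birational models above $X$, where $E$ is exceptional, not on $X$ itself where $g$ is finite.
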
\Mp{valuationstuff1}

\begin{proof}
To any rank 1 valuation $\nu:\C(X)\setminus\{0\}\to\R$ of the function field $\C(X)$ we can associate two basic invariants: the \emph{value group} $$\Gamma_{\nu}:=\left\{\nu(\phi)\mid\phi\in\C(X)\setminus\{0\}\right\}\subset\R$$ and the \emph{residue field} $$\K(\nu):=\{\nu\geq0\}/\{\nu>0\}.$$

A valuation $\nu$ is of the form $r\ord_E$ where $r>0$ and $E$ is a divisor over $X$ if and only if $\Gamma_{\nu}=r\Z$ and $\K(\nu)$ has trascendence degree $\dim(X)-1$ over $\C$ (see \cite[Proposition 10.1]{vaquievaluations}).\\

In our situation, set $\nu:=\ord_E$ and $\nu':=g_*\nu$. Then

$$\Gamma_{\nu'}\subset\Gamma_{\nu}=\Z$$ hence $\Gamma_{\nu'}=r\Z$ for some $r\in\N$.\\

Furthermore, since $g$ is a finite map, $\K(\nu)$ is a finite extension of $\K(\nu')$, therefore both $\K(\nu)$ and $\K(\nu')$ have the same trascendence degree over $\C$, i.e.\ $\dim(X)-1$. From the above description we have that $\nu'=r\ord_{E'}$ for some prime divisor $E'$ over $X$.
\end{proof}

As a consequence of Lemma \ref{valuationstuff1}, we know we can choose $x''\in E$ generic, such that $\bar g$ is holomorphic at $x''$, the critical set $\CC_{\bar g}$ of $\bar g$ is smooth at $x''$ and $\bar g(\CC_{\bar g})$ smooth at $y'=\bar g(x'')$. Picking local coordinates $(z,z_k)$ and $(w,w_k)$ around $x''$ and $y'$ in such way that $E=\{z_k=0\}$ and $E'=\{w_k=0\}$, we see that

$$(w,w_k)=\bar g(z,z_k)=(z,z_k^r).$$ In particular, we notice that

\begin{equation}\label{mvalu}
\ord_E(\CC_{\bar g})=r-1
\end{equation}\Mpe{mvalu} 

and

\begin{equation}\label{mvaluation}
g_*\ord_E=r\ord_{E'}.
\end{equation}\Mpe{mvaluation}

\begin{pro}\label{propequaldiscrep}
With the same notation as above, the following identity holds
$$ra_{E'}=a_E+\ord_E(\CC_g).$$
\end{pro}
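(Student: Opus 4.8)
The plan is to compute the log-discrepancy $a_{E'}$ of the target divisor $E'$ by pulling back the relevant canonical divisors through the local model $\bar g(z,z_k) = (z, z_k^r)$ established above, and then compare with $a_E$. First I would fix log-resolutions $\pi:Y\to X$ containing $E$ and $\pi':Y'\to X$ containing $E'$, together with the meromorphic lift $\bar g:Y\dashrightarrow Y'$ from Lemma \ref{valuationstuff1}, and recall the two basic identities derived from the local normal form: equation \eqref{mvalu}, $\ord_E(\CC_{\bar g}) = r-1$, and equation \eqref{mvaluation}, $g_*\ord_E = r\ord_{E'}$. The key observation is that the critical divisor behaves well under composition: since $g\circ\pi = \pi'\circ\bar g$ (as rational maps), the chain rule for Jacobians gives, on the level of divisor classes pulled back appropriately,
\begin{equation*}
\pi^*\CC_g + K_{Y/X} \ \sim\ \CC_{\bar g} + \bar g^*K_{Y'/X}
\end{equation*}
up to terms supported away from $E$; more precisely, both sides represent $(g\circ\pi)^*K_X - K_Y$ read off in two ways, once as $\pi^*(g^*K_X - K_X) + \pi^*K_X - K_Y = \pi^*\CC_g - K_{Y/X}$... — I would set this up carefully so that evaluating $\ord_E$ on both sides isolates exactly the desired relation.

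The central computation is to apply $\ord_E$ to the identity relating the relative canonical divisors of $\pi$, $\pi'$ and $\bar g$. Writing $K_{Y/Y'}$ (relative to $\bar g$) we have $\ord_E(K_{Y/Y'}) = \ord_E(\Jac(\bar g)) = r - 1$ by the local model, which is precisely \eqref{mvalu}. On the other hand the functoriality $K_{Y/X} \sim K_{Y/Y'} + \bar g^* K_{Y'/X}$ (valid since $g\circ\pi = \pi'\circ\bar g$ and $g$ itself is a self-map of $X$, so the ambient relative canonical classes cancel: the discrepancy of $\bar g$ over $X$ is the discrepancy over $Y'$ plus the pullback of the discrepancy of $\pi'$, minus the $\CC_g$ correction) lets me write
\begin{equation*}
\ord_E(K_{Y/X}) = (r-1) + \ord_E\!\left(\bar g^*K_{Y'/X}\right) = (r-1) + r\,\ord_{E'}(K_{Y'/X}),
\end{equation*}
where the last equality uses \eqref{mvaluation}, i.e.\ $\ord_E(\bar g^*D') = (g_*\ord_E)(D') = r\,\ord_{E'}(D')$ for any divisor $D'$ on $Y'$. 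Adding $\ord_E(\pi^*\CC_g) = \ord_E(\CC_g)$ to account for the fact that $\pi'\circ\bar g$ is the lift of $g$ rather than an honest morphism over $X$ (this is where the critical divisor $\CC_g$, in the class $g^*K_X - K_X$, enters), I obtain
\begin{equation*}
\ord_E(K_{Y/X}) + \ord_E(\CC_g) = (r-1) + r\,\ord_{E'}(K_{Y'/X}).
\end{equation*}
Adding $r$ to both sides and using the definition \eqref{deflogdiscrepancy}, $a_E = \ord_E(K_{Y/X}) + 1$ and $a_{E'} = \ord_{E'}(K_{Y'/X}) + 1$, converts this into $a_E + \ord_E(\CC_g) = r\,a_{E'}$, which is the claim.

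The main obstacle, and the step I would spend the most care on, is making the divisor-class bookkeeping rigorous: the composition $g\circ\pi = \pi'\circ\bar g$ holds only as rational maps, $\bar g$ has an indeterminacy locus, and $\CC_g$, $K_X$ are only $\Q$-Cartier, so the chain-rule identity for relative canonical divisors must be justified either by restricting to the generic point of $E$ (where by Lemma \ref{valuationstuff1} and the subsequent genericity we arranged $\bar g$ holomorphic with the clean local form $(z,z_k)\mapsto(z,z_k^r)$) or by working valuation-theoretically, applying $\ord_E$ directly to the local expression $\Jac(g\circ\pi) = \Jac(g)\circ\pi \cdot \Jac(\pi)$ and $\Jac(\pi'\circ\bar g) = \Jac(\pi')\circ\bar g\cdot\Jac(\bar g)$ and tracking how $\ord_E$ transforms each factor via $g_*\ord_E = r\,\ord_{E'}$. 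Since every identity in the target is linear in $\ord_E$ of various divisors, the cleanest route is to never leave the world of the single valuation $\ord_E$: interpret $a_E$, $a_{E'}$, $\ord_E(\CC_g)$ all as numbers attached to $\ord_E$ (respectively $g_*\ord_E/r = \ord_{E'}$), and verify the relation by a purely local computation at the generic point $x''\in E$ using the explicit coordinates already set up before the proposition. I would also double-check the edge case where $E$ itself is $g$-exceptional or maps into $\CC_g$, confirming the genericity of $x''$ was chosen to avoid any such degeneracy.
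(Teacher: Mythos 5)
Your proposal is correct and follows essentially the same route as the paper: derive the Jacobian chain-rule identity $\pi^*\CC_g+K_{Y/X}=\CC_{\bar g}+\bar g^*K_{Y'/X}$ from $g\circ\pi=\pi'\circ\bar g$, then apply $\ord_E$ using \eqref{mvalu} and \eqref{mvaluation} and the definition \eqref{deflogdiscrepancy}, exactly as in the paper (which phrases the chain rule via pulling back a meromorphic $k$-form and handles the singular case by extending the form from $\Xreg$ using $\Q$-factoriality, matching your valuation-theoretic caveat). The only blemishes are cosmetic: the garbled sign in the aside ``$\pi^*\CC_g-K_{Y/X}$'' and the intermediate statement $K_{Y/X}\sim K_{Y/Y'}+\bar g^*K_{Y'/X}$, which is missing the $\pi^*\CC_g$ term you do reinstate before the final, correct computation.
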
\Mp{propequaldiscrep}

\begin{proof}
Let us first assume $X$ to be smooth and fix $\omega$ a meromorphic k-form on $X$, i.e.\ $\omega$ can be writen as 

$$\omega(x)=h(x)dx_1\wedge\cdots\wedge dx_k,$$ where $x=(x_1,\ldots,x_k)$ is some local chart and $h$ a meromorphic function on $X$.\\

From the commutative diagram

\begin{equation}\label{comm1}
\xymatrix{\ar @{} [dr] |{\circlearrowright}
E\subset Y \ar[d]_{\pi} \ar@{-->}[r]^{\bar g} & Y'\supset E' \ar[d]^{\pi'}\\
X \ar[r]_{g} & X }
\end{equation}\Mpe{comm1}  we obtain (in local coordinates) that 

\begin{equation}\label{eqdiscleft}
\begin{array}{lcl}
\pi^*g^*\omega&=&\pi^*g^*(h(x)dx_1\wedge\cdots\wedge dx_k)\\
&=&\pi^*(h\circ g(x)\Jac(g)(x)dx_1\wedge\cdots\wedge dx_k)\\
&=&h\circ g\circ \pi(y)\Jac(g)\circ\pi(y)\Jac(\pi)dy_1\wedge\cdots\wedge dy_k.
\end{array}
\end{equation}\Mpe{eqdiscleft}

On the other hand

\begin{equation}\label{eqdiscright}
\begin{array}{lcl}
\bar g^*\pi'^*\omega&=&\bar g^*\pi'^*(h(x)dx_1\wedge\cdots\wedge dx_k)\\
&=&\bar g^*(h\circ \pi'(y)\Jac(\pi')dy_1\wedge\cdots\wedge dy_k)\\
&=&h\circ\pi'\circ\bar g(y)\Jac(\pi')\circ \bar g(y)\Jac(\bar g)(y)dy_1\wedge\cdots\wedge dy_k.
\end{array}
\end{equation}\Mpe{eqdiscright}

Using the identity $\pi^*g^*=\bar g^*\pi'^*$ from \eqref{comm1} in \eqref{eqdiscleft} and \eqref{eqdiscright}, we obtain the following identity of (Cartier) divisors

\begin{equation}\label{divisorsidentity}
\pi^*\CC_g+K_{Y/X}=\bar g^*K_{Y'/X}+\CC_{\bar g}.
\end{equation}

This in particular implies that

\begin{equation*}
\ord_E(\CC_g)+\underbrace{\ord_E(K_{Y/X})}_{a_E-1}=\underbrace{\bar g_*\underbrace{\ord_E(K_{Y'/X})}_{a_{E'}-1}}_{r(a_{E'}-1)}+\underbrace{\ord_E(\CC_{\bar g})}_{r-1}
\end{equation*} by \eqref{mvalu} and \eqref{mvaluation}, giving us that $ra_{E'}=a_E+\ord_E(\CC_g)$.\\

If $X$ is not smooth, we pick a meromorphic k-form $\omega$ on $\Xreg$. Note that $\divi(\omega)$ extends uniquely as a Weil divisor on $X$ and our assumption on $X$, namely, $X$ is $\Q$-factorial, allows us to obtain the identity \eqref{divisorsidentity} in the singular case. The computation then follows identically as in the smooth case.

\end{proof}


\subsection{Comparison of orders of vanishing}

By definition, $x\in X$ is an \emph{isolated quotient singularity} if there exists a finite group $G_x\subset GL(k,\C)$ acting freely on $\C^k\setminus\{0\}$ such that  $$(X,x)\cong(\C^k,0)/G_x.$$

From now on, we will assume that our variety $X$ has at worst isolated quotient singularities. This in particular implies that $X$ is $\Q$-factorial and with klt singularities (see \cite{KollarMori}, Prop. 5.15 and Prop. 5.20).\\

\begin{lem}\label{lemmaklt}
For every $y\in X$ there exists a constant $C_y\geq1$ such that

$$\ord_y\phi\leq\ord_0\phi\circ\varrho\leq C_y\ord_y\phi$$ for every holomorphic germ $\phi\in\OO_{X,y}$. If $y$ is smooth, we can pick $C_y=1$.
\end{lem}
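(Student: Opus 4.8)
The plan is to reduce the statement to the case where the quotient map $\varrho:(\C^k,0)\to(X,y)\cong(\C^k,0)/G_y$ is literally the quotient by a finite subgroup $G_y\subset GL(k,\C)$ acting freely on $\C^k\setminus\{0\}$; when $y$ is smooth we take $G_y$ trivial and $\varrho$ an isomorphism, so $C_y=1$ works. For the lower bound $\ord_y\phi\le\ord_0(\phi\circ\varrho)$: writing $\phi\in\OO_{X,y}$ as a $G_y$-invariant germ on $(\C^k,0)$ (this is the content of the identification $\OO_{X,y}=\OO_{\C^k,0}^{G_y}$), and noting that $\varrho$ is given in suitable coordinates by a collection of homogeneous generators of the invariant ring, one checks that $\varrho$ maps $\mmm_0^s$ into $\mmm_y^s$ up to the obvious identification — more directly, if $\phi$ vanishes to order $m$ at $y$ in the sense of the maximal ideal of $\OO_{X,y}$, then since $\mmm_y\subset\OO_{X,y}$ pulls back into $\mmm_0\subset\OO_{\C^k,0}$ (as $\varrho(0)=y$), we get $\phi\circ\varrho\in\mmm_0^m$, i.e. $\ord_0(\phi\circ\varrho)\ge m=\ord_y\phi$. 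This direction is elementary and needs no finiteness of the group.

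For the upper bound $\ord_0(\phi\circ\varrho)\le C_y\ord_y\phi$, the key point is that $G_y$ is \emph{finite}. The idea is: given $\phi\in\OO_{X,y}$, view $\psi:=\phi\circ\varrho$ as a $G_y$-invariant germ in $\OO_{\C^k,0}$ with $\ord_0\psi=:N$. I want to produce an element of the maximal ideal $\mmm_y\subset\OO_{X,y}$ whose $\varrho$-pullback vanishes to order exactly (a bounded multiple of) $1$ — equivalently, to bound $N$ by a constant times $\ord_y\phi$. Concretely, let $h_1,\dots,h_r$ be homogeneous generators of the invariant ring $\C[x_1,\dots,x_k]^{G_y}$ of degrees $e_1,\dots,e_r$ and set $C_y:=\max_i e_i$ (the maximal degree of a minimal generator, finite because $G_y$ is finite by Noether's bound). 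Then every $G_y$-invariant germ lies in the subalgebra generated by $h_1,\dots,h_r$, so $\psi=\sum c_\alpha h_1^{\alpha_1}\cdots h_r^{\alpha_r}$; the invariant germ $\psi$ lies in $\mmm_0^N$, and one argues that this forces $\psi$ to be expressible using monomials in the $h_i$ of total weighted degree $\ge N$, hence each such monomial $h^\alpha$ has $\ord_0 h^\alpha = \sum\alpha_i e_i\ge N$, so $\sum\alpha_i\ge N/C_y$; thus $\phi=\sum c_\alpha \bar h_1^{\alpha_1}\cdots \bar h_r^{\alpha_r}$ (where $\bar h_i\in\mmm_y$ are the images of the generators) lies in $\mmm_y^{\lceil N/C_y\rceil}$, giving $\ord_y\phi\ge N/C_y$, i.e. $\ord_0(\phi\circ\varrho)=N\le C_y\ord_y\phi$.

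The step I expect to be the main obstacle is the clean bookkeeping in the upper-bound argument: passing from "$\psi\in\mmm_0^N$ is $G_y$-invariant" to "$\psi$ is a polynomial in the $h_i$ with only high-order terms" requires care, because a priori low-degree monomials in the $h_i$ could conspire to cancel their low-order parts. The way around this is to use a graded/filtered argument: the $\mmm_0$-adic filtration on $\OO_{\C^k,0}$ restricts to the subalgebra $R:=\C[[x]]^{G_y}$, and because each generator $h_i$ is homogeneous of degree $e_i$, the induced filtration on $R$ is exactly the one in which $\bar h_i$ has degree $e_i$; then $\psi\in\mmm_0^N\cap R$ forces $\psi$ to lie in the ideal of $R$ generated by monomials $h^\alpha$ with $\sum\alpha_i e_i\ge N$, and since the ideal $(\bar h_1,\dots,\bar h_r)=\mmm_y$ satisfies $\mmm_y^t\ni$ all such monomials once $t\le N/C_y$, we conclude $\psi\in\mmm_y^{\lfloor N/C_y\rfloor}\OO_{X,y}$. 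Once this is in place the claimed double inequality follows, with $C_y$ the maximal degree of a minimal homogeneous generator of the invariant ring — in particular $C_y=1$ exactly when $G_y$ is trivial, i.e. $y$ smooth.
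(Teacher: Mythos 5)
Your proof is correct, but it takes a different route from the paper's. The paper argues abstractly: since $\varrho$ is a finite germ with $\varrho^{-1}(y)=\{0\}$, one can fix a single constant $C=C(y)$ with $\mmm_0^{lC}\subset\varrho^*\mmm_y^{l}\subset\mmm_0^{l}$ for all $l$, and then read off $\ord_y(\phi)\leq\ord_0(\phi\circ\varrho)\leq(\ord_y(\phi)+1)C\leq 2C\,\ord_y(\phi)$, taking $C_y=2C$; no use is made of the graded structure of the invariant ring. You instead exploit the linearity of the $G_y$-action: homogeneous components of an invariant series are invariant, each homogeneous invariant of degree $j$ is a combination of monomials $h^\alpha$ of weighted degree exactly $j$, so $\phi\circ\varrho\in\mmm_0^N$ forces $\phi\in\mmm_y^{\lceil N/\max_i e_i\rceil}$. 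This buys you an explicit constant $C_y=\max_i e_i$ (finite by Noether's bound) and the sharp inequality $\ord_0(\phi\circ\varrho)\leq C_y\ord_y\phi$ without the factor $2$ that the paper's off-by-one costs; the paper's argument, in exchange, is two lines and needs only that $\varrho$ is finite with $\varrho^{-1}(y)=\{0\}$, not the quotient structure. Two points to tighten in your write-up, both standard: (a) you pass between the convergent local ring $\OO_{X,y}$ and the formal ring $\C[[x]]^{G_y}$ — either run the argument with convergent series in the $h_i$, or note that orders are unchanged under completion since $\hat\mmm_y^{\,t}\cap\OO_{X,y}=\mmm_y^t$; (b) the infinite sum $\sum_{j\geq N}\psi_j$ of homogeneous invariant pieces should be placed in $\mmm_y^t$ not term by term but by lifting to a series in $\C[[z_1,\dots,z_r]]$ all of whose monomials have weighted degree at least $N$ (or by closedness of ideals in the complete local ring); your graded-filtration remark is exactly the right mechanism, so the proof stands.
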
\Mp{lemmaklt}

\begin{proof}
Let $\phi\in\OO_{X,y}$ and denote by $t:=\ord_y\phi$. We then have that

$$\phi\in\mmm_y^t\quad\text{and}\quad\phi\notin\mmm_y^{t+1},$$ hence 

$$\phi\circ\varrho\in\varrho^*\mmm_y^t\quad\text{and}\quad\phi\circ\varrho\notin\varrho^*\mmm_y^{t+1}.$$

We can pick $C=C(y)\in\N$ such that for every $l\in\N$ it follows that

$$\mmm_0^{lC}\subset\varrho^*\mmm_y^l\subset\mmm_0^l,$$ hence

$$\phi\circ\varrho\in\mmm_0^t\quad\text{and}\quad\phi\circ\varrho\notin \mmm_0^{(t+1)C}$$

$$\ord_y(\phi)\leq\ord_0(\phi\circ\varrho)\leq(\ord_y(\phi)+1)C\leq2C\ord_y(\phi).$$ 

Take $C_y=2C$ and the inequalities follow.

\end{proof}

If $g:X\to X$ is a holomorphic map with $y=g(x)$ and

$$\varrho:(\C^k,0)\to(X,y)\quad\text{and}\quad\varrho':(\C^k,0)\to(X,x)$$ are the quotient maps of $y$ and $x$ respectively, the map $g\circ\varrho':(\C^k,0)\to(X,y)$ can be lifted to a continuous (hence holomorphic) map $\hat g:(\C^k,0)\to(\C^k,0)$ such that the following diagram commutes

\begin{equation}\label{liftholomorphic}
\xymatrix{\ar @{} [dr]
(\C^k,0)\ar[d]_{\varrho'} \ar[r]^{\hat g}&(\C^k,0)\ar[d]^{\varrho}\\
(X,x)\ar[r]_{g}&(X,y)}
\end{equation}\Mpe{liftholomorphic} In particular, we have that $\CC_{\hat g}=\varrho^*\CC_g$.\\

We state the main result of this section

\begin{thm}\label{cocycleineq1}
There exists a positive constant $C_X\geq1$ independent of $g$ such that the following inequality holds

$$\ord_x\phi\circ g\leq C_X\left(k+\ord_x\CC_{g}\right)\ord_{g(x)}\phi$$ for every $\phi\in\OO_{X,x}$ and every $x\in X$.
\end{thm}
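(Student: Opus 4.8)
\textbf{Proof strategy for Theorem \ref{cocycleineq1}.}

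The plan is to reduce the statement on the (possibly singular) variety $X$ to the analogous statement on a smooth model, by passing through the quotient charts supplied by the isolated quotient singularity hypothesis, and there exploit Lemma \ref{tougeronlemma} together with Proposition \ref{propequaldiscrep} to produce a uniform constant. First I would fix $x\in X$, set $y=g(x)$, and choose the quotient maps $\varrho':(\C^k,0)\to(X,x)$ and $\varrho:(\C^k,0)\to(X,y)$, together with the holomorphic lift $\hat g:(\C^k,0)\to(\C^k,0)$ from diagram \eqref{liftholomorphic}, so that $\varrho\circ\hat g = g\circ\varrho'$ and $\CC_{\hat g}=\varrho^*\CC_g$. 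Given $\phi\in\OO_{X,x}$, apply Lemma \ref{lemmaklt} at the point $y$: this gives constants $C_y\geq1$ with $\ord_{y}\phi\le\ord_0(\phi\circ\varrho)\le C_y\ord_{y}\phi$, and similarly at $x$. The point $\phi\circ g\circ\varrho' = (\phi\circ\varrho)\circ\hat g$ is a holomorphic germ on $(\C^k,0)$, so the problem is transported to a statement purely about the holomorphic self-germ $\hat g$ of $(\C^k,0)$, where $\ord_0$ \emph{is} a valuation. There the crude bound $\ord_0(\psi\circ\hat g)\le \ord_{0}(\hat g)\cdot\ord_0(\psi)$ holds, where $\ord_0(\hat g):=\min_i\ord_0(\hat g_i)$ is the order of vanishing of the components of $\hat g$; combining this with the two applications of Lemma \ref{lemmaklt} yields $\ord_x(\phi\circ g)\le C_y\,\ord_0(\hat g)\,\ord_{y}\phi$ (up to the bookkeeping of whether we read $\phi\circ g$ through $\varrho'$ at $x$, which costs only the harmless factor from Lemma \ref{lemmaklt} at $x$).

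The crux is then to bound $\ord_0(\hat g)$ by $C(k+\ord_x\CC_g)$ with $C$ depending only on $X$, and \emph{not} on $g$. I would do this by relating $\ord_0(\hat g)$ to the log-discrepancy of a well-chosen divisor over $X$. Concretely, let $E$ be the exceptional divisor of the blow-up of $(\C^k,0)$ at $0$; then $\ord_0(\hat g)=\ord_E(\hat g^*\mmm_0)$, and since $\mmm_0$ on the source chart corresponds (via $\varrho'$, up to a bounded comparison as in the proof of Lemma \ref{lemmaklt}) to $\mmm_x$ on $X$, this is comparable to $\ord_{E'}(g^*\mmm_x)$ for the divisor $E'$ over $X$ that $E$ maps to. Now by the chain-rule identity \eqref{divisorsidentity}/Proposition \ref{propequaldiscrep}, the behaviour of discrepancies under $g$ is governed precisely by $\CC_g$: one has $ra_{E''}=a_E+\ord_E(\CC_g)$ for the appropriate pushforward divisor $E''$, and the multiplicity $r$ encodes exactly how much $g$ stretches orders of vanishing in the normal direction to $E$. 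Chaining the inequality of Lemma \ref{tougeronlemma} (applicable because $x$, being a quotient singularity, is smooth on the chart, so $\ord_E(D)\le a_E\ord_x(D)$) with the discrepancy identity, and using that the exceptional-divisor discrepancy of a single blow-up equals $k$ as in \eqref{logdisc1}, produces a bound of the shape $\ord_0(\hat g)\le C_X\,(a_E+\ord_E(\CC_g))\le C_X(k+\ord_x\CC_g)$ once we verify $\ord_E(\CC_g)$ (on the chart) is controlled by $\ord_x\CC_g$ up to the $X$-dependent constant from Lemma \ref{lemmaklt}.

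I expect the main obstacle to be the uniformity in $g$: a priori both the lift $\hat g$ and the divisor $E'=\bar g(E)$ over $X$ depend on $g$, so one must argue that the \emph{only} $g$-dependent quantity entering the final estimate is $\ord_x\CC_g$, with all remaining constants absorbed into a single $C_X$ determined by the finitely many quotient-singularity types occurring on $X$ (there are only finitely many singular points, each with its finite group $G_x\subset GL(k,\C)$, so $\sup_{x\in X}C_x<\infty$ and $\sup_x C_x$ can be taken as part of $C_X$; at smooth points all these constants are $1$). The technical care is to make sure the divisor-theoretic identity \eqref{divisorsidentity} is being used on the \emph{smooth chart} $(\C^k,0)$ where $\ord_0$ is a valuation and Lemma \ref{tougeronlemma} has content, rather than on $X$ directly, and then to transfer back using Lemma \ref{lemmaklt} only at the endpoints $x$ and $y$. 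Once the accounting is set up so that every constant is either the blow-up discrepancy $k$, the term $\ord_x\CC_g$, or one of the finitely many $C_y$'s, taking $C_X:=\bigl(\sup_{y\in X}C_y\bigr)^2$ (or a similar fixed power) finishes the proof.
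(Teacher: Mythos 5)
Your outer reduction---transporting the problem to the smooth chart $(\C^k,0)$ via the lift $\hat g$ from \eqref{liftholomorphic}, paying only the constants of Lemma \ref{lemmaklt} at $x$ and $y$, and getting uniformity from the finitely many quotient-singular points---is exactly Step 2 of the paper's proof, and you correctly name the remaining ingredients (Lemma \ref{tougeronlemma}, Proposition \ref{propequaldiscrep}, the value $a_E=k$ from \eqref{logdisc1}). The genuine gap is in the smooth-chart step itself: the ``crude bound'' $\ord_0(\psi\circ\hat g)\leq\ord_0(\hat g)\,\ord_0(\psi)$ with $\ord_0(\hat g):=\min_i\ord_0(\hat g_i)$ is false --- the minimum of the component orders bounds $\ord_0(\psi\circ\hat g)$ from \emph{below}, not above. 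For instance $\hat g(z_1,z_2)=(z_1^2,z_2)$ and $\psi=w_1$ give $\ord_0(\psi\circ\hat g)=2>1\cdot1$. Worse, no bound in terms of the component orders alone can hold: for $\hat g(z_1,z_2)=(z_1^2,\,z_1+z_2^3)$ and $\psi=w_1-w_2^2$ one gets $\ord_0(\psi\circ\hat g)=4$ while $\ord_0(\psi)=1$ and both components have order at most $2$ (here $\ord_0\CC_{\hat g}=3$, so the theorem's bound $(k+\ord_0\CC_{\hat g})\ord_0(\psi)=5$ is respected). Hence the quantity your second paragraph proposes to control by discrepancies, $\min_i\ord_0(\hat g_i)$, is simply not the one governing $\ord_x(\phi\circ g)$, and the subsequent discrepancy discussion cannot repair the first step.

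What replaces the false product bound---and is the paper's Step 1---is a pushforward-of-valuations argument applied for each germ $\phi$ directly: let $E$ be the exceptional divisor of the blow-up at the (smooth) source point, so $\ord_E$ agrees there with the order of vanishing; by Lemma \ref{valuationstuff1} there is a divisor $E'$ over the image point with $g_*\ord_E=r\,\ord_{E'}$, whence $\ord_x(\phi\circ g)=\ord_E(\phi\circ g)=r\,\ord_{E'}(\phi)\leq r\,a_{E'}\,\ord_{g(x)}(\phi)$, where Lemma \ref{tougeronlemma} is applied to $E'$ lying over the smooth \emph{target} point (not at the source, as in your sketch); then Proposition \ref{propequaldiscrep} together with \eqref{logdisc1} gives $r\,a_{E'}=a_E+\ord_E(\CC_g)=k+\ord_x(\CC_g)$, i.e.\ inequality \eqref{smooth1}. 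Applying this to $\hat g$ on the chart and then running your Lemma \ref{lemmaklt} bookkeeping (with $\ord_0\CC_{\hat g}=\ord_0(\varrho'^*\CC_g)\leq C'_x\,\ord_x\CC_g$ and $C_X:=\max_x C_xC'_x$, finite as you observe) yields the theorem; with that substitution your outline coincides with the paper's proof.
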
\Mp{cocycleineq1}

\begin{proof}
\textbf{Step 1:} Assume $x$, $y=g(x)\in X$ both smooth.\\

Let $\pi:Y\to X$ be the blow-up  of $X$ at $x$ and $E=\pi^{-1}(x)$ the exceptional divisor. By Lemma \ref{valuationstuff1} we can find a birational morphism $\pi':Y'\to X$ and a divisor $E'\subset Y'$ such that the lift $\bar g:Y\dashrightarrow Y'$ of $g$ satisfies $\bar g(E)=E'$ and $g_*\ord_E=r\ord_{E'}$ for some $r\in\N$. Since $\pi(E)=x$ we must have $\pi'(E')=y$. Hence, for every $\phi\in\OO_{X,y}$ we have

$$\ord_x(\phi\circ g)=\ord_E(\phi\circ g)=r\ord_{E'}(\phi)\leq ra_{E'}\ord_y(\phi)$$ where the last inequality follows from Proposition \ref{tougeronlemma}.\\

By Proposition \ref{propequaldiscrep} we have that 

$$ra_{E'}=a_E+\ord_E(\CC_g),$$ where $a_E=k$ by \eqref{logdisc1}. Therefore
 
\begin{equation}\label{smooth1} 
\ord_x(\phi\circ g)\leq \left(k+\ord_x(\CC_g)\right)\ord_y(\phi).
\end{equation}\Mpe{smooth1}

\textbf{Step 2:} The general case follows using diagram \eqref{liftholomorphic}. Let $\phi\in\OO_{X,y}$. For the left inequality of Theorem \ref{cocycleineq1} we have that

$$\ord_x(\phi\circ g)\leq\ord_0(\phi\circ g\circ\varrho')=\ord_0(\phi\circ\varrho\circ\hat g).$$

Inequality \eqref{smooth1} implies that

$$\ord_0(\phi\circ\varrho\circ\hat g)\leq\left(k+\ord_0(\Jac(\hat g))\right)\ord_0(\phi\circ\varrho)$$ and by Lemma \ref{lemmaklt} we have that

$$\ord_0(\phi\circ\varrho)\leq C_x\ord_{g(x)}(\phi)$$ for some $C_x\geq1$. Note that $$\ord_0(\Jac(\hat g))=\ord_0\CC_{\hat g}=\ord_0(\varrho'^*\CC_g)\leq C_x'\ord_x\CC_g$$ for some $C_x'\geq1$. Taking $C_X:=\max_{x\in X}\{C_xC'_x\}$ we obtain the desired inequality.\\

\end{proof}


\section{The Jacobian cocycle}

In this section we proceed to define one of our key tools, the Jacobian cocycle. For an extensive discussion on (analytic) cocycles we refer the reader to \cite{FavreThesis}, \cite{MR1724404} and \cite{MR2513537}.

\subsection{Definition and properties of the Jacobian cocycle}

Let $X$ be an irreducible normal projective variety with at worst isolated quotient singularities and let $g:X\to X$ be a surjective holomorphic self-map. Then $X$ is $\Q$-factorial and klt (see Section 3). For every $n\in \N$ we denote by $\mu_n^X$ the Zariski usc function 

$$X\ni x\mapsto\mu_n^X(x):=C_X(\dim(X)+\ord_x\CC_{g^n})$$ on $X$, with $C_X\geq1$ as in Theorem \ref{cocycleineq1}. If $Z\subset X$ is an irreducible algebraic subset, we denote by $\mu_n^X(Z)$ the generic value of $\mu_n^X$ on $Z$ given by $$\mu_n^X(Z):=\min_{x\in Z}\left\{\mu_n^X(x)\right\}=C_X\left(\dim(X)+\ord_Z(\CC_{g^n})\right).$$

It is easy to see that the identity

\begin{equation}\label{criticalcocycle2}
\CC_{g^{n+m}} = \CC_{g^n}+(g^n)^*\CC_{g^m}
\end{equation}\Mpe{criticalcocycle2} follows on a suitable Zariski open subset of $X$ for every $n$, $m\in\N$. Since $X$ is $\Q$-factorial, the identity extends to all of $X$ as $\Q$-Cartier divisors.\\

\begin{pro}\label{propwelldefcocycle}
The following is true for the family $\left(\mu_n^X\right)_{n\in\N}$\\
\begin{itemize}\Mp{propwelldefcocycle}

\item[(i)] (Comparison) Let $D$ be a divisor in $X$. Then for every $x\in X$,

$$\ord_x\left((g^n)^*D\right)\leq\mu_n^X(x)\ord_{g^n(x)}(D),$$

\item[(ii)] (Submultiplicativity) for every $n$, $m\in\N$ and for every $x\in X$ the following inequality holds
$$\mu^X_{n+m}(x)\leq\mu^X_n(x)\mu^X_m\left(g^n(x)\right).$$

\end{itemize}

\end{pro}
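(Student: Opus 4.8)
The plan is to read off (i) directly from Theorem \ref{cocycleineq1} applied to the iterate $g^n$, and then to deduce (ii) from (i) together with the cocycle identity \eqref{criticalcocycle2} for the critical divisor and a short elementary estimate. For (i): the map $g^n\colon X\to X$ is again a surjective regular self-map, and the constant $C_X$ furnished by Theorem \ref{cocycleineq1} does not depend on the map, so applying that theorem to $g^n$ gives $\ord_x(\phi\circ g^n)\le C_X\big(\dim X+\ord_x\CC_{g^n}\big)\ord_{g^n(x)}(\phi)=\mu_n^X(x)\,\ord_{g^n(x)}(\phi)$ for every $\phi\in\OO_{X,g^n(x)}$. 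To pass from functions to divisors one may assume $D$ is effective near $g^n(x)$ (otherwise write it as a difference of two such divisors); pick $m\in\N$ so that $mD$ is Cartier near $g^n(x)$ with local equation $\psi\in\OO_{X,g^n(x)}$, and note that $\psi\circ g^n$ is a local equation of $(g^n)^*(mD)$ near $x$. Then
\[
\ord_x\big((g^n)^*D\big)=\tfrac1m\ord_x(\psi\circ g^n)\le\tfrac1m\,\mu_n^X(x)\,\ord_{g^n(x)}(mD)=\mu_n^X(x)\,\ord_{g^n(x)}(D),
\]
which is (i).

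For (ii), apply $\ord_x$ to \eqref{criticalcocycle2}, i.e.\ to $\CC_{g^{n+m}}=\CC_{g^n}+(g^n)^*\CC_{g^m}$. When $x$ is a smooth point $\ord_x$ is a valuation, hence additive on effective $\Q$-Cartier divisors, and combining this with (i) applied to $D=\CC_{g^m}$ gives $\ord_x\CC_{g^{n+m}}\le\ord_x\CC_{g^n}+\mu_n^X(x)\,\ord_{g^n(x)}\CC_{g^m}$. Writing $k=\dim X$ and $B=\ord_{g^n(x)}\CC_{g^m}$, we get
\[
\mu_{n+m}^X(x)=C_X\big(k+\ord_x\CC_{g^{n+m}}\big)\le C_X\big(k+\ord_x\CC_{g^n}\big)+C_X\mu_n^X(x)B=\mu_n^X(x)\,(1+C_XB),
\]
and since $C_X\ge1$ and $k\ge1$ we have $1+C_XB\le C_Xk+C_XB=\mu_m^X(g^n(x))$, which proves (ii) at smooth points.

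The one subtlety is that at a singular point $x$ the order function $\ord_x$ need not be a valuation --- it is merely super-additive --- so the additivity step above can fail there. To get around this, let $\varrho_x\colon(\C^k,0)\to(X,x)$ and $\varrho_y\colon(\C^k,0)\to(X,y)$ be the quotient maps at $x$ and at $y:=g^n(x)$, and set $\nu_x:=\ord_0\circ\varrho_x^*$; this is a genuine valuation on $\C(X)$, and by Lemma \ref{lemmaklt} it satisfies $\ord_x\le\nu_x\le C_x\,\ord_x$ on $\OO_{X,x}$. Rerunning the computation with $\nu_x$ in place of $\ord_x$ --- using additivity of $\nu_x$, the lift $\hat g\colon(\C^k,0)\to(\C^k,0)$ of $g^n$ supplied by the construction of diagram \eqref{liftholomorphic} (so that $g^n\circ\varrho_x=\varrho_y\circ\hat g$ and $\CC_{\hat g}=\varrho_x^*\CC_{g^n}$, whence $\nu_x\big((g^n)^*\CC_{g^m}\big)=\ord_0\big(\hat g^*\varrho_y^*\CC_{g^m}\big)$ and $\ord_0\CC_{\hat g}=\nu_x(\CC_{g^n})$), and the smooth-case inequality \eqref{smooth1} applied to $\hat g$ --- leads to $k+\nu_x(\CC_{g^{n+m}})\le\big(k+\nu_x(\CC_{g^n})\big)\big(k+\nu_y(\CC_{g^m})\big)$. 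Converting back via Lemma \ref{lemmaklt} this becomes $\mu_{n+m}^X(x)\le\big(C_*^2/C_X\big)\,\mu_n^X(x)\,\mu_m^X(g^n(x))$ with $C_*:=\max_{\xi\in X}C_\xi$, so it suffices to fix once and for all in the definition of the $\mu_n^X$ a constant $C_X\ge C_*^2$ --- which is legitimate since Theorem \ref{cocycleineq1} holds with any larger constant. I expect this last point --- forcing $\ord_x$ to behave like a valuation through $\varrho_x$ at singular points, and keeping track of the loss from Lemma \ref{lemmaklt} --- to be the only real obstacle; the rest is formal.
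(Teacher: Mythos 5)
Your proposal is correct, and for both parts it follows the same route as the paper: (i) is read off from Theorem \ref{cocycleineq1} applied to the iterate $g^n$ (the constant being independent of the map), and (ii) comes from the chain-rule identity \eqref{criticalcocycle2} combined with (i) applied to $D=\CC_{g^m}$, plus the elementary estimate $1+C_X\ord_{g^n(x)}\CC_{g^m}\le C_X(\dim X+\ord_{g^n(x)}\CC_{g^m})$. The one place you go beyond the paper is the singular locus: the paper's proof simply writes $\ord_x(\CC_{g^{n+m}})=\ord_x(\CC_{g^n})+\ord_x\big((g^n)^*\CC_{g^m}\big)$, i.e.\ it treats $\ord_x$ as additive on this decomposition at every point, whereas (as you note) $\ord_x$ is only superadditive at a singular point, and superadditivity is the wrong direction here. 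Your repair --- replacing $\ord_x$ by the genuine valuation $\ord_0\circ\varrho_x^*$ coming from the local quotient map, rerunning the smooth-case estimate \eqref{smooth1} for the lift $\hat g$, and then absorbing the loss from Lemma \ref{lemmaklt} by fixing $C_X\ge\max_\xi C_\xi^2$ in the definition of $\mu_n^X$ --- is legitimate (Theorem \ref{cocycleineq1} and everything downstream tolerate a larger constant, and the isolated quotient singularities are exactly where Lemma \ref{lemmaklt} applies), so your version is actually the more careful one at the finitely many singular points. Incidentally, the paper's intermediate display bounding $\mu_{n+m}^X(x)$ by $\mu_n^X(x)\left(\frac1{C_X}+\ord_{g^n(x)}\CC_{g^m}\right)$ drops a factor of $C_X$; your bound $\mu_n^X(x)\left(1+C_X\ord_{g^n(x)}\CC_{g^m}\right)$ is the correct intermediate step, and the final submultiplicativity holds either way.
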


\begin{proof}

Part (i) follows immediately from Theorem \ref{cocycleineq1}.\\

For proving (ii), observe that from \eqref{criticalcocycle2} we obtain

\begin{multline*}
\mu_{n+m}^X(x)=C_X(\dim(X)+\ord_x(\CC_{g^{n+m}}))=\\=C_X(\dim(X)+\ord_x(\CC_{g^n})+\ord_x((g^n)^*\CC_{g^m})).
\end{multline*}

By Theorem \ref{cocycleineq1} we have that

$$\ord_x((g^n)^*\CC_{g^m})\leq C_X\left(\dim(X)+\ord_x(\CC_{g^n})\right)\ord_{g^n(x)}(\CC_{g^m})$$ implying that

\begin{multline*}
\mu_{n+m}^X(x)\leq \left(C_X(\dim(X)+\ord_x(\CC_{g^n}))\right)\left(\frac1{C_X}+\ord_{g^n(x)}(\CC_{g^m})\right)\leq\\\leq\mu_n^X(x)\mu_m^X(g^n(x)).
\end{multline*}

\end{proof}

By the submultiplicativity property, it is easy to see that the function

$$X\ni x\mapsto \mu_{\infty}^X(x):=\lim_{n\to+\infty}\left(\mu_n^X(x)\right)^{\frac1{n}}$$ is well defined (i.e. the limit always exist for every $x\in X$). It satisfies $$\mu_{\infty}^X\circ g=\mu_{\infty}^X.$$

\subsection{Jacobian cocycles and totally invariant sets}

Let $f:\P^k\to\P^k$ be a holomorphic map of algebraic degree $d\geq2$ and let $X\subset\P^k$ be an irreducible algebraic set such that $f^{-1}(X)=X$. Define $g:=f|_X:X\to X$ and take $\tilde g:\tilde X\to \tilde X$ to be the lift of $g$ to the normalization $\pi:\tilde X\to X$. The commutative diagram

\begin{equation*}
\xymatrix{\ar @{} [dr] 
\tilde X\ar[r]^{\tilde g}\ar[d]_{\pi}&\tilde X\ar[d]^{\pi}\\
X\ar[r]_{g}\ar[d]_{\iota}&X\ar[d]^{\iota}\\
\P^k\ar[r]_{f}&\P^k
}\end{equation*} where $\iota:X\to\P^k$ denotes the inclusion map, gives us the following commutative diagram of groups and homomorphisms

\begin{equation*}
\xymatrix{\ar @{} [dr] 
\Z\cong H^2(\P^k;\Z)\ar[r]^{f^* =\, d\cdot}\ar[d]_{(\iota\pi)^*}& H^2(\P^k;\Z)\cong\Z\ar[d]^{(\iota\pi)^*}\\
H^2(\tilde X;\Z) \ar[r]_{\tilde g^*} & H^2(\tilde X;\Z) }
\end{equation*} where $\iota:X\hookrightarrow\P^k$ In particular, if $\omega$ is the Fubini-Study metric on $\P^k$ then $\{\omega\}$ generates $H^2(\P^k,\Z)$ and $f^*\{\omega\}=d\{\omega\}$, therefore

$$\tilde g^*\pi^*\iota^*\{\omega\}=\pi^*g^*\iota^*\{\omega\}=\pi^*\iota^*f^*\{\omega\}=d\cdot\pi^*\iota^*\{\omega\}$$ giving us a $\tilde g^*$-invariant class $\{\omega_{\tilde X}\}:=\{(\iota\pi)^*\omega\}$ in $H^2(\tilde X;\Z)$.The class $\{\omega\}$ represents the first Chern class of the ample line bundle $\OO(1)$ in $\P^k$, which induces an ample line bundle $\OO_{\tilde X}(1):=(\iota\pi)^*\OO(1)$ on $\tilde X$ with

$$\tilde g^*c_1\left(\OO_{\tilde X}(1)\right)=\tilde g^*\{\omega_{\tilde X}\}=d\{\omega_{\tilde X}\}.$$

The ample classes on $\tilde X$ form a strict open convex cone which is invariant by $\tilde g^*$ in the finite dimensional vector space $H^2(\tilde X;\R)$ (for details see \cite{GH}, \cite{MR2095471}). Using that there exists an invariant ample class on $\tilde X$, namely $\{\omega_{\tilde X}\}$ with $(\tilde g^*)^n\{\omega_{\tilde X}\}=d^n\{\omega_{\tilde X}\}$ for all $n\in\N$, it is possible to conclude that $\|(\tilde g^*)^n\|\lesssim d^n$ for every $n\in\N$.\\

Moreover, if we assume $\tilde X$ to have at worst isolated quotient singularities then we obtain

\begin{lem}\label{boundedmassforthecriticalset}
In the same setting as above, there exists a positive constant $A$ independent of $x\in\tilde X$ and $n\in\N$ such that $$\mu_n^{\tilde X}(x)\leq A d^n.$$ 
\end{lem}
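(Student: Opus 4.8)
The plan is to bound the quantity $\mu_n^{\tilde X}(x) = C_{\tilde X}(\dim \tilde X + \ord_x \CC_{\tilde g^n})$ uniformly by controlling the orders of vanishing of the critical divisors $\CC_{\tilde g^n}$. Since $\dim \tilde X$ and $C_{\tilde X}$ are fixed constants, everything reduces to showing that $\sup_{x \in \tilde X} \ord_x(\CC_{\tilde g^n}) \lesssim d^n$. The natural idea is to reinterpret $\ord_x(\CC_{\tilde g^n})$ cohomologically: since $\CC_{\tilde g^n}$ is a $\Q$-Cartier Weil divisor in the class of $(\tilde g^n)^* K_{\tilde X} - K_{\tilde X}$, and the order of vanishing of a divisor at a point can be bounded in terms of the intersection of its class with an ample class (via the local structure of $\tilde X$ near its finitely many singular points, using Lemma \ref{lemmaklt}-type comparisons between $\ord_x$ and $\ord_0$ on the quotient chart), it suffices to bound the numerical size of the class $[\CC_{\tilde g^n}]$ in $H^2(\tilde X;\R)$.

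First I would make precise the reduction: there is a constant $B \geq 1$, depending only on $\tilde X$ and on a fixed ample class $\{\omega_{\tilde X}\}$, such that for every effective $\Q$-Cartier divisor $D$ on $\tilde X$ and every $x \in \tilde X$ one has $\ord_x(D) \leq B \cdot (\{D\} \cdot \{\omega_{\tilde X}\}^{\dim \tilde X - 1})$, or more precisely $\ord_x(D) \leq B \,\|\{D\}\|$ for a fixed norm on $H^2(\tilde X;\R)$. For $\tilde X$ smooth this is classical (blow up $x$ and use that the multiplicity is at most the degree against a hyperplane class); at an isolated quotient singularity one passes to the quotient chart $\varrho:(\C^k,0)\to(\tilde X,x)$, applies Lemma \ref{lemmaklt} to compare $\ord_x$ with $\ord_0$, and absorbs the finite local constant into $B$ since there are only finitely many singular points. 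I would then note that effectivity of $\CC_{\tilde g^n}$ (it is an honest critical divisor, hence effective where it is Cartier, and extends as an effective Weil divisor) makes the comparison with the ample class legitimate.

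Next, the cohomological estimate: $\{\CC_{\tilde g^n}\} = (\tilde g^n)^* K_{\tilde X} - K_{\tilde X}$ in $H^2(\tilde X;\R)$. Writing $K_{\tilde X}$ in terms of a basis and using the discussion preceding the lemma — namely that the pull-back operators $(\tilde g^*)^n$ on $H^2(\tilde X;\R)$ satisfy $\|(\tilde g^*)^n\| \lesssim d^n$, which follows from the existence of the invariant ample class $\{\omega_{\tilde X}\}$ with $(\tilde g^*)^n\{\omega_{\tilde X}\} = d^n \{\omega_{\tilde X}\}$ and the standard cone argument — we immediately get $\|\{\CC_{\tilde g^n}\}\| \leq \|(\tilde g^*)^n\|\,\|K_{\tilde X}\| + \|K_{\tilde X}\| \lesssim d^n$. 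Combining with the previous paragraph, $\ord_x(\CC_{\tilde g^n}) \leq B \|\{\CC_{\tilde g^n}\}\| \leq B' d^n$ uniformly in $x$ and $n$, and therefore $\mu_n^{\tilde X}(x) = C_{\tilde X}(\dim \tilde X + \ord_x \CC_{\tilde g^n}) \leq C_{\tilde X}(\dim \tilde X + B' d^n) \leq A d^n$ for a suitable $A$ (using $d \geq 2$ so that the additive constant is absorbed), which is the claim.

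The main obstacle I anticipate is the passage from the cohomology class $\{D\}$ to the pointwise order of vanishing $\ord_x(D)$ at a singular point $x$: one must be careful that $D$ need not be Cartier there, so the naive "blow up and compare with a hyperplane section" argument has to be run on the quotient chart or on a resolution, keeping track of the relative canonical divisor and using that $\tilde X$ is klt with only finitely many (isolated, quotient) singular points so the relevant local constants are uniformly bounded. A secondary subtlety is to ensure the norm estimate $\|(\tilde g^*)^n\| \lesssim d^n$ is genuinely available as claimed in the text — but since the excerpt asserts it from the invariant-ample-cone argument, I would simply cite that and focus the write-up on the local comparison at the singularities.
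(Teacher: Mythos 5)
Your argument is correct, but it is organized differently from the paper's own proof. The paper argues via the cocycle identity $\CC_{\tilde g^n}=\sum_{i=0}^{n-1}(\tilde g^i)^*\CC_{\tilde g}$ from \eqref{criticalcocycle2}: it bounds the order of vanishing of each summand $(\tilde g^i)^*\CC_{\tilde g}$ at any point by a constant times $d^i$ (setting $A'=\sup_x\ord_x(\CC_{\tilde g})$) and then sums the geometric series to get $\mu_n^{\tilde X}(x)\leq Ad^n$. You instead bound $\ord_x(\CC_{\tilde g^n})$ in one stroke from its divisor class $(\tilde g^n)^*K_{\tilde X}-K_{\tilde X}$, using the estimate $\|(\tilde g^*)^n\|\lesssim d^n$ stated just before the lemma together with a uniform comparison $\ord_x(D)\leq B\deg_{\omega_{\tilde X}}(D)$ for effective $\Q$-Cartier divisors, treating the finitely many quotient-singular points through the local charts and Lemma \ref{lemmaklt}. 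The two routes are the same in substance -- both come down to the linear growth (of order $d^n$) of the class of the critical divisor, which is what the invariant ample class buys -- and in fact the step the paper leaves implicit, namely why $\ord_x\bigl((\tilde g^i)^*\CC_{\tilde g}\bigr)$ is $O(d^i)$ uniformly in $x$ (iterating Theorem \ref{cocycleineq1} would be circular here), is precisely the multiplicity-versus-degree comparison that you spell out; so your write-up, though longer, makes explicit the point on which the paper is terse. Two minor simplifications: you can dispense with the operator norm entirely via the projection formula, $(\tilde g^n)^*D\cdot\omega_{\tilde X}^{k-1}=D\cdot(\tilde g^n)_*\omega_{\tilde X}^{k-1}=d^n\,D\cdot\omega_{\tilde X}^{k-1}$, applied either termwise to $\CC_{\tilde g}$ as in the paper or to $K_{\tilde X}$ as in your scheme; and for the comparison $\ord_x(D)\leq B\deg_{\omega_{\tilde X}}(D)$ at a singular point, the cleanest argument is to cut a general complete-intersection curve through $x$ by members of a fixed very ample system, pull back by the quotient map $\varrho$, and compare local intersection numbers there, absorbing the order of the group $G_x$ and the constant of Lemma \ref{lemmaklt} into $B$; since $\tilde X$ has only finitely many singular points, $B$ is uniform, as you indicate.
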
\Mp{boundedmassforthecriticalset}

\begin{proof}

Note that $$\CC_{\tilde g^n}=\sum_{i=0}^{n-1}(\tilde g^i)^*\CC_{\tilde g}$$ (see equation \eqref{criticalcocycle2}).\\ 

Denote by $A':=\sup_{x\in\tilde X}\ord_x(\CC_{\tilde g})$. Then we have that

\begin{multline*}
\ord_x(\CC_{\tilde g^n})\leq A'\sum_{i=0}^{n-1}d^i=A'\frac{d^n-1}{d-1}\Longrightarrow\\\Longrightarrow\mu^X_n(x)\leq C_X\left(\dim(X)+A'\frac{d^n-1}{d-1}\right)\leq\\\leq C_X\left(\frac{\dim(X)}{d^n}+A'\frac{1-d^{-n}}{d-1}\right)d^n\leq (\underbrace{\dim(X)+A'(d-1)^{-1}}_{A})d^n.
\end{multline*}

\end{proof}

\begin{thm}\label{algebraictotinv}
The $\tilde g$-totally invariant set

$$E_{\tilde X}:=\left\{x\in \tilde X\mid \mu_{\infty}^{\tilde X}(x)=d\right\}=\bigcup_{\delta>0}\bigcap_{n\in\N}\left\{x\in \tilde X\mid \ord_x\left(\CC_{{\tilde g}^n}\right)\geq\delta d^n\right\}$$ is algebraic.
\end{thm}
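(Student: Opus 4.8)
The plan is to exhibit $E_{\tilde X}$ as an increasing union of algebraic sets $E_\delta$ which stabilizes, so that $E_{\tilde X}$ equals one of them. First I would fix $\delta>0$ and consider, for each $n\in\N$, the algebraic set
\[
Z_{n,\delta}:=\left\{x\in\tilde X\mid\ord_x(\CC_{\tilde g^n})\geq\delta d^n\right\},
\]
which is Zariski closed because $x\mapsto\ord_x(\CC_{\tilde g^n})$ is a Zariski upper-semicontinuous function (the order of vanishing of a fixed divisor jumps up on subvarieties). Then $E_\delta:=\bigcap_{n\in\N}Z_{n,\delta}$ is algebraic as an intersection of algebraic sets, and by Noetherianity this intersection is already achieved by finitely many $n$; moreover $E_{\tilde X}=\bigcup_{\delta>0}E_\delta$ is an increasing union as $\delta\downarrow 0$. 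So it suffices to show this increasing union stabilizes, i.e. that there is $\delta_0>0$ with $E_{\delta}=E_{\delta_0}$ for all $0<\delta\le\delta_0$; equivalently, that for $x\in E_{\tilde X}$ one automatically has a uniform lower bound $\ord_x(\CC_{\tilde g^n})\geq\delta_0 d^n$ for all $n$, not merely a lower bound with a constant depending on $x$.

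The engine for this uniformity is the submultiplicativity of the Jacobian cocycle (Proposition \ref{propwelldefcocycle}(ii)) together with the upper bound $\mu_n^{\tilde X}\le A d^n$ from Lemma \ref{boundedmassforthecriticalset}. The key point is that $\mu_\infty^{\tilde X}=d$ at a point $x$ forces $\mu_n^{\tilde X}(x)$ to be comparable to $d^n$ with \emph{uniform} constants: indeed, if $x$ lies in $E_{\tilde X}$, then since $\mu_\infty^{\tilde X}$ is $\tilde g$-invariant, so does its whole forward orbit, and the decomposition $\CC_{\tilde g^{n+m}}=\CC_{\tilde g^n}+(\tilde g^n)^*\CC_{\tilde g^m}$ combined with the comparison inequality gives, after dividing by $d^{n+m}$ and using $\mu_m^{\tilde X}(\tilde g^n x)\le Ad^m$, a bound of the form
\[
\frac{\mu_{n+m}^{\tilde X}(x)}{d^{n+m}}\ \le\ \frac{\mu_n^{\tilde X}(x)}{d^n}\cdot\frac{\mu_m^{\tilde X}(\tilde g^n x)}{d^m}\ \le\ A\,\frac{\mu_n^{\tilde X}(x)}{d^n}.
\]
From this and the fact that the sequence $\mu_n^{\tilde X}(x)^{1/n}\to d$, a standard Fekete-type argument (applied to $-\log(\mu_n^{\tilde X}(x)/d^n)$, which is superadditive up to the additive constant $\log A$) yields a constant $c>0$, depending only on $A$ (hence only on $\tilde X$, not on $x$ or $n$), such that $\mu_n^{\tilde X}(x)\geq c\,d^n$ for all $n$. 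Translating back through the definition $\mu_n^{\tilde X}(x)=C_{\tilde X}(\dim\tilde X+\ord_x\CC_{\tilde g^n})$, this gives $\ord_x(\CC_{\tilde g^n})\geq\delta_0 d^n$ for all $n$, with $\delta_0$ independent of $x$.

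Putting the pieces together: $E_{\tilde X}\subseteq E_{\delta_0}$ by the uniform bound just obtained, while $E_{\delta_0}\subseteq E_{\tilde X}$ trivially since $x\in E_{\delta_0}$ gives $\mu_n^{\tilde X}(x)^{1/n}\geq(C_{\tilde X}\delta_0 d^n)^{1/n}\to d$ and the reverse inequality $\mu_\infty^{\tilde X}\le d$ is automatic from Lemma \ref{boundedmassforthecriticalset}. Hence $E_{\tilde X}=E_{\delta_0}$ is algebraic. Total invariance of $E_{\tilde X}$ is immediate from $\mu_\infty^{\tilde X}\circ\tilde g=\mu_\infty^{\tilde X}$ and the fact that $\tilde g$ is a finite surjective map, so $x\in E_{\tilde X}\iff\tilde g(x)\in E_{\tilde X}$. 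The main obstacle I expect is making the uniformity argument genuinely uniform in $x$: one must be careful that the Fekete-subadditive argument only uses the global constant $A$ from Lemma \ref{boundedmassforthecriticalset} and does \emph{not} secretly reintroduce an $x$-dependent constant (e.g. from how fast $\mu_n^{\tilde X}(x)^{1/n}$ converges to $d$), and it is precisely the two-sided control $c\,d^n\le\mu_n^{\tilde X}(x)\le A\,d^n$ forced by invariance of $\mu_\infty^{\tilde X}$ that closes this gap.
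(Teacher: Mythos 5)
Your reduction of the problem to stabilization of the increasing union $\bigcup_{\delta>0}E_\delta$ is legitimate (and $E_\delta=\bigcap_n Z_{n,\delta}$ is indeed algebraic), but the step that is supposed to produce the uniform $\delta_0$ is where the proof breaks. Writing $a_n:=\mu_n^{\tilde X}(x)/d^n$, what Proposition \ref{propwelldefcocycle}(ii) together with Lemma \ref{boundedmassforthecriticalset} gives you is only $a_{n+m}\le a_n\cdot\mu_m^{\tilde X}(\tilde g^n x)d^{-m}\le A\,a_n$. For your Fekete argument you would need $-\log a_n$ to be superadditive up to $\log A$, i.e. $a_{n+m}\le A\,a_n a_m$, and that would require the comparison $\mu_m^{\tilde X}(\tilde g^n x)\le A\,\mu_m^{\tilde X}(x)$ between values of the cocycle at \emph{different} points of the orbit, which is not available (the upper bound $\mu_m^{\tilde X}\le Ad^m$ goes the wrong way: it prevents $a_n$ from growing, not from decaying). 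With only $a_{n+m}\le A\,a_n$ and $a_n^{1/n}\to1$, nothing stops $a_n$ from tending to $0$ subexponentially: a sequence with $\mu_n^{\tilde X}(x)=d^n e^{-\sqrt n}$ (and $\mu_m^{\tilde X}\equiv d^m$ along the forward orbit) satisfies every inequality you use, has $\mu_\infty^{\tilde X}(x)=d$, and yet lies in no $E_\delta$. So the asserted uniform bound $\mu_n^{\tilde X}(x)\ge c\,d^n$ with $c$ depending only on $A$ is not a formal consequence of the cocycle calculus, and with it falls both the stabilization claim and, in fact, the set equality $\{\mu_\infty^{\tilde X}=d\}=\bigcup_\delta E_\delta$ that your argument would have established along the way.

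The missing content is genuinely geometric, and it is what the paper's proof supplies. There one argues by contradiction on an irreducible component $Z$ of $E_{\tilde X}$: if $Z$ is not totally invariant under an iterate of $\tilde g$, one takes the minimal totally invariant $Y\supseteq Z$, uses Dinh--Sibony's lemma to produce components $Z_r\subset\tilde g^{-r}(Z)$ with degrees $d_r\ge\theta d^{rp}$ ($p=\operatorname{codim}(Z,Y)$), notes that the currents $S_N=d^{-N}[\CC_{\tilde g^N}]$ have generic Lelong numbers $\ge\delta d^{-r}$ along $Z_r$ but only $O((\lambda/d)^N)$ along $Y$, and then feeds this into the uniform self-intersection inequality $\sum_{r\le M}(\nu_r-\beta)^p d_r'\le A_{\tilde X}$ (Lemma \ref{OurInequality}, from \cite{Rodrigo1}) to get a contradiction for $M,N$ large. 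Total invariance of every component plus the finiteness of totally invariant algebraic subsets (Section 2) then yields algebraicity. None of these inputs — degree growth of preimages of non-invariant subvarieties, the Demailly-type intersection bound, finiteness of totally invariant sets — appears in your proposal, and some such global input is unavoidable: the purely local/submultiplicative information you use is consistent with $E_{\tilde X}$ failing to stabilize.
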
\Mp{algebraictotinv}

The proof relies on the fact that the family of totally invariant algebraic subsets of $\tilde X$ is finite (see Section 2). The key idea is to prove that every irreducible component of $E_{\tilde X}$ is totally invariant for some iterate of $g$, therefore $E_{\tilde X}$ has only finitely many components. In order to do this we use a uniform bound from \cite{Rodrigo1} for the orders of vanishing of $d^{-N}[\CC_{{\tilde g}^N}]$.

\begin{proof}

We argue by contradiction:  Let $Z\subset E_{\tilde X}$ be an irreducible component; define $S_N:=d^{-N}[\CC_{\tilde g^N}]$ for $N$ large ($N$ will be made explicit later). By definition there exists $\delta>0$ independent of $N$ such that

$$\ord_x(S_N)\geq\delta,\quad\forall\,x\in Z.$$

For every $0\leq r<N$, by the chain rule $\CC_{\tilde g^N}=\CC_{\tilde g^{r+N-r}}=\CC_{\tilde g^r}+(\tilde g^r)^*\CC_{\tilde g^{N-r}}$ (see equation \eqref{criticalcocycle2}) we obtain that for every $x\in\tilde g^{-r}(Z)$	

$$\ord_x\CC_{\tilde g^N}\geq \ord_x\left((\tilde g^r)^*\CC_{\tilde g^{N-r}}\right)\geq\ord_{\tilde g^r(x)}\CC_{\tilde g^{N-r}}\geq\delta d^{N-r}$$ implying

$$\ord_x(S_N)\geq\delta d^{-r},\quad\forall\,x\in {\tilde g}^{-r}(Z).$$

Let $Y$ be the minimal irreducible algebraic set containing $Z$ which is totally invariant by $\tilde g^s$ for some $s\geq1$. For simplicity we assume $s=1$. If $Z\neq Y$, then $Z$ has positive codimension $p>0$ in $Y$ and since $Y\not\subset E_{\tilde X}$ we can find $C>0$ and $\lambda<d$ such that $\ord_{Y}(\CC_{\tilde g^n})\leq C\lambda^n$ for all $n\geq1$. Then it follows that

$$\ord_x(S_N)\leq C\left(\frac{\lambda}{d}\right)^N\ll1$$ for every $x\in Y$. Denote by $\beta$ the generic Lelong number of $S_N$ along $Y$. Thus $0\leq\beta\leq C(\lambda/d)^N$.\\

Since $Z$ is not totally invariant, following the ideas of Dinh-Sibony (see \cite{MR2468484}, Lemma 6.10) we know there exists a constant $\theta>0$ (independent of $r$) and algebraic sets $Z_r\subset f^{-r}(Z)$ of degrees $d_r$ satisfying

$$d_r\geq\theta d^{rp}\quad\forall\,r\geq1.$$

Define $Z'_0:=Z_0$ and $Z'_r:=\overline{Z_r\setminus(Z_0\cup\cdots\cup Z_{r-1})}$ for $r>0$; note that for $r\neq s$, $Z'_r$ and $Z'_s$ have no common irreducible components. Denote by $d'_r$ the degree of $Z'_r$. It is clear by the construction that $d'_0+\ldots+d'_r\geq d_r$ and that the generic Lelong numbers $\nu_r$ of $S_N$ at $Z'_r$ satisfy $\nu_r\geq\delta d^{-r}$. We have:

\begin{lem}\label{OurInequality}
There exist a positive constant $A_{\tilde X}$ independent of $r\geq1$ and $N$ such that

$$\sum_{r=0}^M(\nu_r-\beta)^pd'_r\leq A_{\tilde X}$$
\end{lem}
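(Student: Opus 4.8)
The plan is to bound the quantity $\sum_{r=0}^M(\nu_r-\beta)^p d'_r$ by interpreting it as a self-intersection number on a modification of $\tilde X$ and invoking a Demailly-type mass inequality. Concretely, I would work with the positive closed $(1,1)$-current $S_N = d^{-N}[\CC_{\tilde g^N}]$, whose cohomology class sits in a fixed ample class up to a bounded factor: indeed, $\CC_{\tilde g^N}$ lies in the class $(\tilde g^N)^*K_{\tilde X}-K_{\tilde X}$, and since $\|(\tilde g^*)^N\|\lesssim d^N$ (established earlier in the excerpt using the invariant ample class $\{\omega_{\tilde X}\}$), the class $\{S_N\}=d^{-N}\{\CC_{\tilde g^N}\}$ has cohomology norm bounded independently of $N$. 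So $\{S_N\}\le A_0\{\omega_{\tilde X}\}$ for a constant $A_0$ depending only on $\tilde X$.

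First I would subtract off the generic Lelong number $\beta$ along $Y$: the current $S_N - \beta[Y]$ is still positive closed (on the relevant Zariski-open set, and extends by the $\Q$-factoriality of $\tilde X$), its class is $\le A_0\{\omega_{\tilde X}\}$ (subtracting an effective class only decreases it), and it now has generic Lelong number $\nu_r-\beta$ along each $Z'_r$. Next, since the $Z'_r$ have no common irreducible components for distinct $r$ (by construction of the $Z'_r$ as successive differences), the key point is that the generic multiplicities of $S_N-\beta[Y]$ along the various $Z'_r$ can be ``separated'': I would apply the Demailly self-intersection inequality in the refined form used by Dinh--Sibony (cf. \cite{MR2468484}, around Lemma 6.10, or the self-intersection inequalities à la Demailly referenced in the introduction) to the current $S_N-\beta[Y]$ along the codimension-$p$ subvarieties $Z'_r$. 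This yields an estimate of the shape
$$
\sum_{r=0}^M (\nu_r-\beta)^p \, d'_r \le C_{\tilde X}\, \big(\{S_N\}\cdot\{\omega_{\tilde X}\}^{\dim X - p}\big) \le C_{\tilde X}\, A_0 \cdot \big(\{\omega_{\tilde X}\}^{\dim X}\big),
$$
where the right-hand side depends only on $\tilde X$ (through its dimension, its degree with respect to $\OO_{\tilde X}(1)$, and the comparison constants $C_{\tilde X}$); in particular it is independent of $r$, $M$, and $N$. Setting $A_{\tilde X}$ to be this final constant gives the lemma.

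The main obstacle I anticipate is making the Demailly-type self-intersection inequality work cleanly on the possibly singular normal variety $\tilde X$ with only isolated quotient singularities, rather than on a smooth manifold: one must either pass to a log-resolution $\tilde X'\to\tilde X$ and track how the orders of vanishing $\nu_r$, $\beta$ and the degrees $d'_r$ transform (using Lemma~\ref{lemmaklt} and the klt property to control the discrepancy corrections), or invoke a version of Demailly's mass-concentration inequality valid on klt pairs. The bookkeeping that the exponent is exactly the codimension $p$ of $Z$ in $Y$ — so that $(\nu_r-\beta)^p d'_r$ is the right ``mass'' contribution — is what ties the estimate to the geometry, and care is needed because the $Z'_r$ need not be irreducible; one handles this by summing over irreducible components and using $d'_0+\cdots+d'_r\ge d_r\ge\theta d^{rp}$ only afterwards, in the step that derives the contradiction (which is outside this lemma). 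A secondary technical point is ensuring that $S_N-\beta[Y]$ is genuinely positive and that subtracting $\beta[Y]$ does not create negative multiplicities along components of $Z'_r$ not contained in $Y$; since every $Z'_r\subset\tilde g^{-r}(Z)\subset Y$ by minimality of $Y$, this is automatic, but it should be stated.
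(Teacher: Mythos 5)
You have the right input -- the uniform control of the class $\{S_N\}=d^{-N}\{\CC_{\tilde g^N}\}$, coming from $\CC_{\tilde g^N}$ lying in the class of $(\tilde g^N)^*K_{\tilde X}-K_{\tilde X}$ together with $\|(\tilde g^*)^N\|\lesssim d^N$, is exactly what makes a Demailly-type bound independent of $N$ plausible -- but the heart of Lemma~\ref{OurInequality} is the refined self-intersection inequality itself, and your sketch of how to reduce it to the classical Demailly inequality does not go through. Note that the paper does not prove this lemma either: it is quoted from \cite{Rodrigo1}, where a Demailly-type approximation and self-intersection inequality is developed on possibly singular projective varieties precisely so that it applies with the singular ambient $\tilde X$ and with the exponent equal to the codimension \emph{inside $Y$}. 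So invoking ``the refined form used by Dinh--Sibony'' is close to restating the lemma rather than proving it; moreover the reference you point to (Lemma 6.10 of \cite{MR2468484}) is the degree-growth estimate used to get $d_r\geq\theta d^{rp}$, not a self-intersection inequality, and the Dinh--Sibony inequality is formulated on the smooth ambient $\P^k$, not on $\tilde X$.

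Two concrete steps in your proposed reduction fail. First, $S_N-\beta[Y]$ is a $(1,1)$-current only when $Y$ is a hypersurface of $\tilde X$; here $Y$ is the minimal totally invariant set containing $Z$ and can have any codimension (it may even be $\tilde X$ itself), so the subtraction, the positivity claim (which in codimension one would come from Siu's decomposition rather than from $\Q$-factoriality), and the statement about its cohomology class are ill-defined in general. Second, your displayed estimate pairs the $(1,1)$-class $\{S_N\}$ with $\{\omega_{\tilde X}\}^{\dim\tilde X-p}$, which is not a number for $p>1$; the correct classical statement involves the self-intersection $\{S_N\}^p$, and even in that form Demailly's inequality (\cite{DemRegCurr}, \cite{DemNumCrit}) controls codimension-$p$ components of the Lelong upper-level sets \emph{in the ambient space}, with a correction term determined by the level-set filtration. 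The entire difficulty -- and the actual content of the lemma -- is to replace ``codimension $p$ in the ambient space'' by ``codimension $p$ in $Y$'' with correction $\beta$ equal to the generic Lelong number of $S_N$ along $Y$: this requires regularizing $S_N$ and restricting or intersecting along $Y$ in a way that removes its generic singularity along $Y$ (this is what \cite{Rodrigo1}, and the corresponding argument in \cite{MR2468484}, carry out), and your proposal leaves exactly that step to the cited black box.
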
\Mp{OurInequality}

Lemma \ref{OurInequality} can be viewed as a generalization of a special case of Demailly's self-intersection inequalities found in \cite{DemRegCurr} and \cite{DemNumCrit}. For a proof of Lemma \ref{OurInequality} (in a more general setting) see \cite{Rodrigo1}.\\

We now fix $M<N$ (which can be made very large) such that

$$\beta d^r\leq\frac1{2}\delta,\quad\forall\,r=0,\ldots,M.$$

We observe that

\begin{multline}\label{keyinequality1}
\sum_{r=0}^M(\nu_r-\beta)^pd'_r\geq\sum_{r=0}^M(\delta d^{-r}-\beta)^pd'_r\geq\\\geq\left(\frac{\delta}{2}\right)^p\sum_{r=0}^Md^{-rp}d'_r=\\=\left(\frac{\delta}{2}\right)^p[d'_0(1-d^{-p})+(d'_0+d'_1)(d^{-p}-d^{-2p})+\ldots\\ \ldots +(d'_0+\ldots+d'_{M-1})(d^{-(M-1)p}-d^{-Mp})+d_Md^{-Mp}].
\end{multline}\Mpe{keyinequality1}
Notice that for every $r=0,\ldots,M-1$ we have

$$d^{-rp}-d^{-(r+1)p}\geq\frac1{2}d^{-rp}.$$

Plugging this into \eqref{keyinequality1} we obtain that

\begin{multline}\label{keyinequality2}
\sum_{r=0}^M(\nu_r-\beta)^pd'_r\geq\frac1{2}\left(\frac{\delta}{2}\right)^p\sum_{r=0}^Md^{-rp}d_r\geq\\\geq \frac1{2}\left(\frac{\delta}{2}\right)^p\sum_{r=0}^Md^{-rp}\theta d^{rp}=\frac{\theta}{2}\left(\frac{\delta}{2}\right)^p(M+1),
\end{multline}\Mpe{keyinequality2}
where $\delta>0$ and $\theta>0$ are independent of $M$ and $N$. Therefore, using Lemma \ref{OurInequality} in the inequality \eqref{keyinequality2} we obtain

$$\frac{\theta}{2}\left(\frac{\delta}{2}\right)^p(M+1)\leq A_{\tilde X}$$ which produces a contradiction if we take $N$ and $M$ sufficiently large.

\end{proof}

\begin{cor}\label{imageexceptional}
The algebraic set $E_X:=\pi\left(E_{\tilde X}\right)\subset X$ is totally invariant: $$g^{-1}(E_X)=E_X.$$
\end{cor}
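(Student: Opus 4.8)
The plan is to deduce Corollary \ref{imageexceptional} from Theorem \ref{algebraictotinv} together with the basic compatibility between the normalization map $\pi:\tilde X\to X$ and the two dynamical systems $\tilde g$ and $g$. First I would record that $E_{\tilde X}$ is algebraic (Theorem \ref{algebraictotinv}) and that $\pi$ is a finite (in particular proper) surjective morphism, so that $E_X:=\pi(E_{\tilde X})$ is automatically a closed algebraic subset of $X$; this is the only point where algebraicity of $E_{\tilde X}$ is used. The heart of the matter is the equality $g^{-1}(E_X)=E_X$, and the idea is that $\pi$ is a bijection over a Zariski-dense open set and that $\tilde g$ is itself the lift of $g$, so $\pi\circ\tilde g=g\circ\pi$.

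The proof of the set-theoretic equality would proceed in two inclusions. For $g(E_X)\subset E_X$ (equivalently $E_X\subset g^{-1}(E_X)$), take $y=\pi(x)$ with $x\in E_{\tilde X}$; then $g(y)=g(\pi(x))=\pi(\tilde g(x))$, and since $E_{\tilde X}$ is $\tilde g$-totally invariant we have $\tilde g(x)\in E_{\tilde X}$, hence $g(y)\in\pi(E_{\tilde X})=E_X$. For the reverse inclusion $g^{-1}(E_X)\subset E_X$, take $y\in X$ with $g(y)\in E_X$; lift $y$ to some $x\in\tilde X$ with $\pi(x)=y$ (possible since $\pi$ is surjective). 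Then $\pi(\tilde g(x))=g(\pi(x))=g(y)\in E_X=\pi(E_{\tilde X})$. At this point one must pass from ``$\tilde g(x)$ maps into $\pi(E_{\tilde X})$'' to ``$\tilde g(x)\in E_{\tilde X}$'', and for this I would invoke the characterization of $E_{\tilde X}$ via the invariant function $\mu_\infty^{\tilde X}$: since $\pi$ is a finite birational morphism, the pull-back identity $\CC_{\hat g}=\varrho^*\CC_g$ (and, one level up, the compatibility of critical divisors under the normalization recorded in Section 4) gives a two-sided comparison $\ord_{x'}(\CC_{g^n})\asymp\ord_{\pi(x')}(\CC_{g^n})$ uniformly in $n$, hence $\mu_\infty^{\tilde X}(x')$ depends only on $\pi(x')$ and equals $d$ precisely when $\pi(x')\in E_X$. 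Applying this with $x'=\tilde g(x)$, from $\pi(\tilde g(x))\in E_X$ we conclude $\tilde g(x)\in E_{\tilde X}$, and then $y=\pi(x)=\pi(\tilde g(x))$... more carefully, we use $\tilde g(x)\in E_{\tilde X}$ together with total invariance of $E_{\tilde X}$ under $\tilde g$ to get $x\in E_{\tilde X}$, whence $y=\pi(x)\in E_X$.

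The main obstacle I anticipate is exactly the step where one wants to conclude membership in $E_{\tilde X}$ from membership of the image in $E_X$: total invariance of $E_{\tilde X}$ under $\tilde g$ means $\tilde g^{-1}(E_{\tilde X})=E_{\tilde X}$, so once we know $\tilde g(x)\in E_{\tilde X}$ we get $x\in\tilde g^{-1}(E_{\tilde X})=E_{\tilde X}$, which closes the argument — provided we have justified that $E_{\tilde X}=\{\mu_\infty^{\tilde X}=d\}$ is genuinely $\pi$-saturated, i.e.\ a union of fibres of $\pi$. This saturation is the one nontrivial input and it rests on the comparison of orders of vanishing across the normalization; I would either cite the fact (used implicitly in Section 4, via the lift $\hat g$ and the relation $\CC_{\hat g}=\varrho^*\CC_g$) that $\ord_x$ and $\ord_{\pi(x)}$ are comparable up to a bounded constant for the critical divisors of all iterates simultaneously, or argue directly that $\mu_\infty$ is constant on fibres of the finite map $\pi$ because $\mu_\infty^{\tilde X}\circ\tilde g=\mu_\infty^{\tilde X}$ and $\mu_\infty$ descends. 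Once saturation is in hand, the two inclusions above give $g^{-1}(E_X)=E_X$, completing the proof.
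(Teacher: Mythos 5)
Your first inclusion ($E_X\subset g^{-1}(E_X)$, via $g\circ\pi=\pi\circ\tilde g$ and $\tilde g$-invariance of $E_{\tilde X}$) is fine and is also the easy half in the paper. The problem is the reverse inclusion, where your entire argument rests on the claim that $E_{\tilde X}$ is $\pi$-saturated, i.e.\ that $\mu_\infty^{\tilde X}$ is constant on fibres of the normalization $\pi$, so that $\pi(\tilde g(x))\in E_X$ forces $\tilde g(x)\in E_{\tilde X}$. This claim is not justified by what you cite, and the paper never asserts it. The identity $\CC_{\hat g}=\varrho^*\CC_g$ concerns the local smooth cover $\varrho:(\C^k,0)\to(X,y)$ of a quotient singularity of the \emph{normal} variety, not the normalization map $\pi:\tilde X\to X$; and the cocycle $\mu_n^{\tilde X}$ is built from $\ord_x(\CC_{\tilde g^n})$, where $\CC_{\tilde g^n}$ is a divisor on $\tilde X$ that is not (and cannot in general be written as) the pull-back under $\pi$ of a critical divisor on the possibly non-normal $X$ — indeed $\ord_y$ at a non-normal point and a critical divisor of $g$ on $X$ are not even set up in the paper. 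A two-sided comparison $\ord_x(\phi\circ\pi)\asymp\ord_{\pi(x)}(\phi)$ for germs $\phi$ on $X$ (which does hold by finiteness of $\pi$) says nothing about comparing $\ord_{x_1}(\CC_{\tilde g^n})$ and $\ord_{x_2}(\CC_{\tilde g^n})$ at two \emph{distinct} points $x_1,x_2$ of the same fibre $\pi^{-1}(y)$: over the non-normal locus the multiplicities of $\CC_{\tilde g^n}$ along the different branches can a priori grow at different rates, so one point of the fibre may lie in $E_{\tilde X}$ while another does not. Likewise, invariance $\mu_\infty^{\tilde X}\circ\tilde g=\mu_\infty^{\tilde X}$ gives no descent along $\pi$; that step is a non sequitur.

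The paper's proof is structured precisely to avoid this saturation issue: it writes $\pi^{-1}(Z)=\tilde Z\cup\tilde Z'$ for a component $Z$ of $E_X$, explicitly allowing a part $\tilde Z'$ \emph{not} contained in $E_{\tilde X}$, takes $l$ with $\tilde g^{-l}(\tilde Z)=\tilde Z$ (possible since $E_{\tilde X}$ is algebraic and totally invariant, by Theorem \ref{algebraictotinv}), deduces $Z\subset g^{-l}(Z)$, and then rules out an extra piece $W$ in $g^{-l}(Z)=Z\cup W$ by computing $\tilde g^{-l}(\pi^{-1}(Z))$ in two ways and using the identity $\tilde g(\pi^{-1}(W))=\pi^{-1}(g(W))$ coming from openness of $g$, reaching the contradiction $\tilde Z\subset\tilde Z'$. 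To repair your proof you would either have to prove the fibrewise constancy of $\mu_\infty^{\tilde X}$ (which is exactly the missing, and doubtful, ingredient) or switch to a component-level argument of this kind.
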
\Mp{imageexceptional}

\begin{proof}[Proof of Corollary \ref{imageexceptional}]
We prove that every component of $E_X$ is totally invariant.\\

Let $Z\subset E_X$ be an irreducible component and write

$$\pi^{-1}(Z)=\tilde Z\cup \tilde Z',$$ where $\tilde Z\subset E_{\tilde X}$ and $\pi(\tilde Z)=Z$. Then there exists $l\geq1$ such that $\tilde g^{-l}(\tilde Z)=\tilde Z$, this in particular implies that

$$g^l(Z)=g^l(\pi(\tilde Z))=\pi(\tilde g^l(\tilde Z))=\pi(\tilde Z)=Z\Longrightarrow Z\subset g^{-l}(Z).$$ Write $g^{-l}(Z)=Z\cup W$.\\

If $W\neq\emptyset$ we have that $g^l(W)=Z$. On the other hand,

$$\tilde g^{-l}\pi^{-1}(Z)=\tilde g^{-l}(\tilde Z\cup\tilde Z')=\tilde Z\cup \tilde g^{-l}(\tilde Z')$$

and $$\tilde g^{-l}\pi^{-1}(Z)=\pi^{-1}g^{-l}(Z)=\pi^{-1}(Z\cup W)=\tilde Z\cup\tilde Z'\cup\pi^{-1}(W).$$ Putting this together we obtain that

$$\tilde g^{-l}(\tilde Z')=\tilde Z'\cup\pi^{-1}(W)\Rightarrow \tilde Z'=\tilde g^l(\tilde g^{-l}(\tilde Z'))=\tilde g^l(\tilde Z')\cup \tilde g^l(\pi^{-1}(W)).$$

Since the map $g:=f|_X:X\to X$ is open, it is easy to see that

\begin{equation}\label{propfollownormal}
{\tilde g}(\pi^{-1}(W))=\pi^{-1}(g(W))
\end{equation} for every $W\subset X$. In particular, by identity \eqref{propfollownormal} we have that

$$\tilde g^l(\pi^{-1}(W))=\pi^{-1}(g^l(W))=\pi^{-1}(Z)=\tilde Z\cup\tilde Z'$$ implying

$$\tilde Z'=\tilde g^l(\tilde Z')\cup\tilde Z\cup\tilde Z'\Longrightarrow\tilde Z\subset\tilde Z'$$ contradicting our hypothesis. Hence $W=\emptyset$ and therefore $g^{-l}(Z)=Z$.

\end{proof}


\subsection{The exceptional family $\EE_f$}

\begin{dfn}\label{defexceptionalset}
We define the \emph{exceptional family $\EE_f$ of $f$} as the finite collection of irreducible subsets $X\subseteq\P^k$ such that\\

\begin{itemize}
\item[(i)] $\P^k\in\EE_f$;\\

\item[(ii)] $X\in\EE_f\setminus\{\P^k\}$ if and only if there exist $X'\in\EE_f$ such that $X$ is an irreducible component of $E_{X'}$. In this case we will say that $X$ is an immediate successor of $X'$.\\
\end{itemize}

The exceptional family $\EE_f$ of $f$ is a partially ordered set, where $X\preceq Y$ if there exist a sequence of elements $X=X_1\subsetneq\cdots\subsetneq X_r=Y$ in $\EE_f$ such that $X_i$ is an immediate successor of $X_{i+1}$ for all $i=1,\ldots,r-1$.
\end{dfn}\Mp{defexceptionalset}

Note that by definition, we have that

$$\emptyset\preceq X\preceq\P^k\quad\forall X\in\EE_f.$$

We will say that $X\in\EE_f$ is an \emph{exceptional leaf} if $\emptyset$ is the immediate succesor of $X$ (i.e. $E_X=\emptyset$).


\subsection{Asymptotic behavior}

We finish this section giving a uniform estimate of the orders of vanishing outside the totally invariant algebraic subset $E_{\tilde X}\subset\tilde X$.

\begin{thm}\label{asymptotic1}
There exist constants $C>0$ and $0\leq\rho<d$ such that

$$\sup_{x\notin E_{\tilde X}}\mu_n^{\tilde X}(x)\leq C\rho^n$$ for all $n\in\N$.
\end{thm}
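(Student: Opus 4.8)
The plan is to exploit the submultiplicativity of the Jacobian cocycle together with the fact that $E_{\tilde X}$ is algebraic (Theorem \ref{algebraictotinv}) and $\tilde g$-totally invariant, reducing a uniform statement on the open set $\tilde X\setminus E_{\tilde X}$ to a finite bookkeeping argument over the irreducible components of a well-chosen algebraic approximation of $E_{\tilde X}$. First I would record the two basic facts already available: by Lemma \ref{boundedmassforthecriticalset} the cocycle grows at most like $Ad^n$ everywhere, and by Proposition \ref{propwelldefcocycle}(ii) it is submultiplicative along $\tilde g$-orbits, while $\mu_\infty^{\tilde X}$ is $\tilde g$-invariant and strictly less than $d$ off $E_{\tilde X}$.

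Next I would set up the dichotomy used in the proof of Theorem \ref{algebraictotinv}: for an irreducible algebraic $Z\subset\tilde X$ with $Z\not\subset E_{\tilde X}$, the generic value $\mu_n^{\tilde X}(Z)$ satisfies $\mu_n^{\tilde X}(Z)\le C_Z\lambda_Z^n$ for some $\lambda_Z<d$ and $C_Z>0$ — this is exactly the "$Y\not\subset E_{\tilde X}$" input there, and it follows because the decreasing-in-$n$ quantity $\left(\mu_n^{\tilde X}(Z)\right)^{1/n}$ converges to $\mu_\infty^{\tilde X}(Z)<d$ along a generic point of $Z$ and stays subexponentially controlled by submultiplicativity. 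The subtlety is that a uniform bound at the generic point of $Z$ does not immediately give a uniform bound at every point of $Z$, since $\mu_n^{\tilde X}$ jumps upward on proper subvarieties; this is where one must iterate. The key step is therefore a descending induction on dimension: stratify $\tilde X\setminus E_{\tilde X}$ by Zariski-locally closed pieces on each of which one already controls $\mu_n^{\tilde X}$ subexponentially, using that $E_{\tilde X}$ together with finitely many "bad loci" (where the generic bound on a stratum fails to extend) is again a proper algebraic subset not contained in $E_{\tilde X}$, and apply the generic bound there. Concretely, one lets $W_1\supsetneq W_2\supsetneq\cdots$ be the chain where $W_1=\tilde X$, and $W_{j+1}$ is the union of $E_{\tilde X}$-free components of the locus in $W_j$ where $\mu_1^{\tilde X}$ exceeds its generic value on the components of $W_j$; since each step drops dimension this terminates, and on $W_j\setminus W_{j+1}$ the function $\mu_1^{\tilde X}$ is bounded by the max of finitely many generic values, which one then bootstraps via submultiplicativity $\mu_n^{\tilde X}(x)\le\mu_1^{\tilde X}(x)\,\mu_{n-1}^{\tilde X}(\tilde g(x))$ — but here one needs $\tilde g(x)$ to again avoid $E_{\tilde X}$, which holds by total invariance of $E_{\tilde X}$.

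To finish, I would combine the strata estimates into a single exponent: each component $Z$ contributes a rate $\lambda_Z<d$, there are finitely many of them (a consequence of $\tilde X$ being Noetherian and the construction producing finitely many algebraic sets), so $\rho:=\max_Z\lambda_Z<d$ works, after absorbing the finitely many multiplicative constants $C_Z$ and the uniform constant $A$ from Lemma \ref{boundedmassforthecriticalset} into a single $C$. The chain-rule identity $\CC_{\tilde g^n}=\sum_{i=0}^{n-1}(\tilde g^i)^*\CC_{\tilde g}$ together with Proposition \ref{propwelldefcocycle}(i) lets one trade the pullback contributions for $\mu$-factors evaluated along the (totally-invariant-complement-preserving) orbit, so that a point $x\notin E_{\tilde X}$ whose orbit meets a stratum with rate $\lambda$ picks up that rate after a bounded delay. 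The main obstacle I anticipate is precisely the non-valuative behavior of $\ord_x$ and $\mu_n^{\tilde X}$ at singular or special points: the generic (subexponential) control on each $Z$ must be promoted to a pointwise control, and the cleanest route is the stratification-plus-total-invariance argument sketched above, being careful that at each inductive stage the "bad locus" one removes is genuinely algebraic, genuinely proper, and genuinely disjoint-in-the-limit from $E_{\tilde X}$ so that the rate $<d$ is preserved. One should also double-check that finitely many strata suffice — this is where Theorem \ref{totallyinvariantsetsalgebraic} (finiteness of totally invariant algebraic subsets) and the construction of $E_{\tilde X}$ as an algebraic set with finitely many components enter, guaranteeing the induction halts with a uniform $\rho$.
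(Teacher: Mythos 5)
Your overall strategy is recognizably the paper's: generic subexponential bounds on components not contained in $E_{\tilde X}$, a descending chain of ``bad loci'' where the cocycle exceeds its generic value, submultiplicativity along orbits that stay off $E_{\tilde X}$ by total invariance, and termination by a dimension count. However, there is a genuine quantitative gap in your bootstrapping step. You stratify by the locus where $\mu_1^{\tilde X}$ exceeds its generic value and then iterate the one-step inequality $\mu_n^{\tilde X}(x)\le\mu_1^{\tilde X}(x)\,\mu_{n-1}^{\tilde X}(\tilde g(x))$. All this can yield is $\mu_n^{\tilde X}(x)\le M^n$, where $M$ is the maximum of finitely many generic values of $\mu_1^{\tilde X}$ — and there is no reason for $M<d$: already at a generic point of a component of $\CC_{\tilde g}$ one has $\mu_1^{\tilde X}\ge C_{\tilde X}(\dim\tilde X+1)$, which can easily exceed $d$. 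Your closing claim that the constants $C_Z$ can be ``absorbed into a single $C$'' is precisely what fails: chaining submultiplicativity along the orbit incurs the multiplicative constant once per step, i.e.\ exponentially many times, so a bound $C\lambda^n$ applied stepwise produces $(C\lambda)^n$, not $C\rho^n$ with $\rho<d$.

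The paper's proof avoids this by working with a single large iterate $N$. It defines the bad loci $\tilde X_j:=\{x\mid \mu_N^{\tilde X}(x)\ge C_j\lambda_j^N\}$, with thresholds coming from the generic bounds ($\lambda_j<d$), shows by the codimension argument that after at most $\dim\tilde X$ steps the bad locus is contained in $E_{\tilde X}$, and then, for $x\notin E_{\tilde X}$, writes $n=tN+l$ and applies submultiplicativity in blocks of length $N$: each orbit point $\tilde g^{l+jN}(x)$ lies off $E_{\tilde X}$ (total invariance), hence off the final bad locus, so each block contributes at most roughly $C_{j_0}\lambda_{j_0}^N$, giving $\mu_n^{\tilde X}(x)\le 2C_{j_0}^{t+1}\lambda_{j_0}^{n-l-1}\le A\rho^n$ with $\lambda_{j_0}<\rho<d$, because the constant now enters only as $C_{j_0}^{\,n/N}=(C_{j_0}^{1/N})^n$, which is harmless once $N$ is large. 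Two further points: the strata must be defined via $\mu_N$ (the same $N$ used in the blocks), not via $\mu_1$, since the jumping loci of $\mu_n^{\tilde X}$ vary with $n$; and the induction terminates with the bad locus inside $E_{\tilde X}$, so the orbit only needs to avoid $E_{\tilde X}$, not remain in a fixed stratum — your phrasing about an orbit ``meeting a stratum with rate $\lambda$'' does not address this. Finally, a minor slip: $\bigl(\mu_n^{\tilde X}(x)\bigr)^{1/n}$ is not monotone and the submultiplicativity is along orbits rather than in $n$ at a fixed point, so Fekete does not apply directly; the generic bound instead follows from the existence of the limit $\mu_\infty^{\tilde X}$ and the description $E_{\tilde X}=\{\mu_\infty^{\tilde X}=d\}$.
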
\Mp{asymptotic1}

\begin{cor}\label{asymptotic2}
Given any hypersurface $H$ in $X$, it follows that

$$\sup_{x\notin E_X}d^{-n}\ord_x\left((g^n)^*H\right)\to0$$ as $n\to+\infty$.

\end{cor}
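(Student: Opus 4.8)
The plan is to deduce the corollary directly from Theorem \ref{asymptotic1} together with the Comparison property (Proposition \ref{propwelldefcocycle}(i)) and the relation between orders of vanishing on $X$ and on its normalization $\tilde X$. First I would reduce to the normalization: let $\pi:\tilde X\to X$ be the normalization, $\tilde g$ the lift of $g=f|_X$, and set $\tilde H:=\pi^*H$, a divisor on $\tilde X$. For a point $x\in X$, pick $\tilde x\in\pi^{-1}(x)$; by Lemma \ref{lemmaklt} applied at $\tilde x$ (and the analogous comparison between $\ord_x$ on $X$ and $\ord_{\tilde x}$ on $\tilde X$, which costs only a bounded constant $C_{\tilde x}\geq1$, uniformly bounded by some $C'_X$ over all of $X$ since there are finitely many singular points and they are quotient singularities), one has $\ord_x\bigl((g^n)^*H\bigr)\leq C'_X\,\ord_{\tilde x}\bigl((\tilde g^n)^*\tilde H\bigr)$, using that $\pi\circ\tilde g=g\circ\pi$ so that $\pi^*(g^n)^*H=(\tilde g^n)^*\tilde H$ as $\Q$-Cartier divisors on $\tilde X$. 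Also note that $x\notin E_X=\pi(E_{\tilde X})$ forces $\tilde x\notin E_{\tilde X}$.

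Next I would apply the Comparison inequality from Proposition \ref{propwelldefcocycle}(i) on $\tilde X$: for every $\tilde x\in\tilde X$,
\begin{equation*}
\ord_{\tilde x}\bigl((\tilde g^n)^*\tilde H\bigr)\leq\mu_n^{\tilde X}(\tilde x)\,\ord_{\tilde g^n(\tilde x)}(\tilde H).
\end{equation*}
Since $\tilde H=\pi^*H$ is a fixed divisor on the projective variety $\tilde X$, the quantity $\ord_{y}(\tilde H)$ is bounded above by a constant $B_H$ independent of $y\in\tilde X$ (it is the maximum multiplicity of $\tilde H$, attained along its singular strata, and is finite). Combining, for $x\notin E_X$ and a corresponding $\tilde x\notin E_{\tilde X}$,
\begin{equation*}
d^{-n}\ord_x\bigl((g^n)^*H\bigr)\leq C'_X B_H\, d^{-n}\mu_n^{\tilde X}(\tilde x)\leq C'_X B_H\, d^{-n}\sup_{z\notin E_{\tilde X}}\mu_n^{\tilde X}(z).
\end{equation*}

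Finally I would invoke Theorem \ref{asymptotic1}: there are constants $C>0$ and $0\leq\rho<d$ with $\sup_{z\notin E_{\tilde X}}\mu_n^{\tilde X}(z)\leq C\rho^n$, so the right-hand side is bounded by $C'_X B_H C (\rho/d)^n\to0$ as $n\to+\infty$, since $\rho/d<1$. Taking the supremum over all $x\notin E_X$ gives the claim. I do not expect a serious obstacle here; the only point requiring a little care is the passage to the normalization — one must check that $\tilde x\notin E_{\tilde X}$ whenever $x\notin E_X$ (immediate from $E_X=\pi(E_{\tilde X})$ and the fact that $\pi$ is finite, so $\pi^{-1}(E_X)$ is exactly the union of $E_{\tilde X}$ with components mapping into $E_X$; in fact by Corollary \ref{imageexceptional} and its proof $\pi^{-1}(E_X)\supseteq E_{\tilde X}$ and $\pi^{-1}(x)\cap E_{\tilde X}=\emptyset$ for $x\notin E_X$), and that the comparison constants between $\ord$ on $X$ and on $\tilde X$ are uniform, which follows from finiteness of $\Xsing$ and Lemma \ref{lemmaklt}.
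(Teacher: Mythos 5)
Your proposal is correct and follows essentially the same route as the paper: pass to the normalization via the order comparison $\ord_{\pi(\tilde x)}\leq\ord_{\tilde x}\circ\pi^*$, apply the Comparison property of $\mu_n^{\tilde X}$, bound the multiplicity of the fixed divisor $\pi^*H$ uniformly, and conclude with Theorem \ref{asymptotic1}, noting that $x\notin E_X=\pi(E_{\tilde X})$ forces $\pi^{-1}(x)\cap E_{\tilde X}=\emptyset$. The only cosmetic difference is your extra constant $C'_X$, which is unnecessary since the needed direction of the comparison under the normalization map holds with constant $1$.
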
\Mp{asymptotic2}

\begin{proof}[Proof Corollary \ref{asymptotic2}]
Note that for every $x\in X$ we have

$$\ord_x\left((g^n)^*H\right)\leq\max_{y\in\pi^{-1}(x)}\ord_y\left(({\tilde g}^n)^*(\pi^*H)\right)$$ implying

$$\sup_{x\notin E_X}\ord_x\left((g^n)^*H\right)\leq\sup_{y\notin\pi^{-1}(\pi(E_{\tilde X})}\ord_y\left(({\tilde g}^n)^*(\pi^*H)\right)\leq\sup_{y\notin E_{\tilde X}}\ord_y\left(({\tilde g}^n)^*(\pi^*H)\right).$$

By Proposition \ref{propwelldefcocycle} (ii), it follows that

$$\sup_{y\notin E_{\tilde X}}\ord_y\left(({\tilde g}^n)^*(\pi^*H)\right)\leq\sup_{y\notin E_{\tilde X}}\mu^{\tilde X}_n(y)\ord_{{\tilde g}^n(y)}H$$ which combined with Theorem \ref{asymptotic1} gives us

$$\sup_{x\notin E_X}d^{-n}\ord_x\left((g^n)^*H\right)\leq C\left(\frac{\rho}{d}\right)^n$$ for some $C>0$ and $\rho<d$. Taking $n\to+\infty$ we obtain the desired convergence to zero.

\end{proof}

\begin{proof}[Proof Theorem \ref{asymptotic1}]
Let ${\tilde X}_{0,1},\ldots,{\tilde X}_{0,m_0}$ be the irreducible components of the critical set $\CC_{\tilde g}$ not contained in $E_{\tilde X}$. For every $i=1,\ldots,m_0$ we can pick $x_{0,i}\in {\tilde X}_{0,i}$ such that there exist $C_1>0$ and $\lambda_1<d$ satisfying

$$\max_{i=1,\ldots,m_0}\{\mu_n^{\tilde X}(x)\}<C_1\lambda_1^n,\quad\forall\,n\geq1.$$

For $N>1$ large, define the algebraic set

$${\tilde X}_1:=\{x\in {\tilde X}\mid\mu_N^{\tilde X}(x)\geq C_1\lambda_1^N\}.$$

We clearly have the proper inclusion of algebraic sets

$${\tilde X}_1\subsetneq\CC_{\tilde g^N},$$ where the codimension of ${\tilde X}_1$ in $\CC_{\tilde g^N}$ is $\geq1$ at every point $x\in {\tilde X}_1\setminus E_{\tilde X}$.\\

If ${\tilde X}_1\subset E_{\tilde X}$, for $n\gg N$ and $x\in {\tilde X}\setminus E_{\tilde X}$, denoting $n=tN+l$, $l\in\{0,\ldots,N-1\}$ we have

$$\mu^{\tilde X}_{n}(x)=\mu^{\tilde X}_{tN+l}(x)\leq\mu^{\tilde X}_{l}(x)\mu^{\tilde X}_{tN}(\tilde g^l(x))\leq\mu^{\tilde X}_{N}(x)\prod_{j=0}^{t-1}\mu^{\tilde X}_{N}(\tilde g^{l+jN}(x)).$$

Since ${\tilde X}\setminus E_{\tilde X}$ is totally invariant

$$x\notin E_{\tilde X}\Longrightarrow \tilde g^{l+jN}(x)\notin E_{\tilde X},$$ hence $\tilde g^{l+jN}(x)\notin {\tilde X}_1$ implying that

$$\mu^{\tilde X}_{n}(x)\leq(\dim({\tilde X})+C_1\lambda_1^N)(\dim({\tilde X})+C_1\lambda_1^N)^t\leq2C_1^{t+1}\lambda_1^{n-l-1}.$$

Now we can find $A>0$ and $\lambda_1<\rho<d$ (independent of $x$ and $n$) such that

$$2C_1^{t+1}\lambda_1^{n-l-1}\leq A\rho^n,\quad\forall\,n\geq1,$$ and the theorem follows.\\

Now assume ${\tilde X}_1\not\subset E_{\tilde X}$. Let ${\tilde X}_{1,1},\ldots,{\tilde X}_{1,m_1}$ be the irreducible components of ${\tilde X}_1$ that are not contained in $E_{\tilde X}$. As before, for every $i=1,\ldots,m_1$ we can pick $x_{1,i}\in X_{1,i}$ and $C_2\geq C_1$, $\lambda_1<\lambda_2<d$ such that

$$\max_{i=1,\ldots,m_1}\{\mu_n^{\tilde X}(x)\}<C_2\lambda_2^n,\quad\forall\,n\geq1.$$

Define the algebraic set

$${\tilde X}_2:=\{x\in {\tilde X}\mid\mu_N^{\tilde X}(x)\geq C_2\lambda_2^N\},$$ which has codimension $\geq2$ in $\CC_{\tilde g}$ at every point $x\in {\tilde X}_2\setminus E_{\tilde X}$. Again, if ${\tilde X}_2\subset E_{\tilde X}$ the theorem follows picking some $A>0$ and $\lambda_2<\rho<d$, so we can assume ${\tilde X}_2\not\subset E_{\tilde X}$. Inductively we construct a strictly decreasing sequence of algebraic sets

$${\tilde X}_j:=\{x\in {\tilde X}\mid\mu_N^{\tilde X}(x)\geq C_j\lambda_j^N\},$$ for $j=1,\ldots,\dim({\tilde X})$, where $\lambda_1<\lambda_2<\cdots<\lambda_{\dim({\tilde X})}<d$, $0<C_1\leq C_2\leq\cdots\leq C_{\dim({\tilde X})}$ and the codimension of ${\tilde X}_j$ in $\CC_{\tilde g}$ is $\geq j$ at every point $x\in {\tilde X}_j\setminus E_{\tilde X}$. Thus, there exists $1\leq j_0\leq \dim({\tilde X})$ such that ${\tilde X}_{j_0}\subset E_{\tilde X}$ (since ${\tilde X}_{\dim({\tilde X})}\setminus E_{\tilde X}=\emptyset$), implying that there exist $A\geq C_{j_0}$ and $\lambda_{j_0}<\rho<d$ so that

$$\mu_n^{\tilde X}(x)\leq2C_{j_0}^{t+1}\lambda_{j_0}^{n-l-1}\leq A\rho^n,\quad\forall\,n\geq1,$$ for every $x\in {\tilde X}_{j_0}\setminus E_{\tilde X}$ as before. This concludes the proof.

\end{proof}


\section{Approximation of (1,1)-currents}

In this section we will discuss a method introduced by J. P. Demailly for approximating positive closed (1,1)-currents by currents with controlled singular sets. This approximation method will play a very important role in our approach.\\

Let $S$ be a positive closed (1,1)-current on $\P^k$ of mass 1, i.e. 

$$S=\omega+dd^c\varphi$$ where $\omega$ is the Fubini-Study metric on $\P^k$ and $\varphi$ is a $\omega$-psh function on $\P^k$. The metric $\omega$ is the curvature form of some smooth metric $h$ on $\OO(1)$ over $\P^k$, hence $S$ is the curvature form of the singular metric $\hat h=he^{-\varphi}$ on $\OO(1)$.\\

For every $m\in\N$, let $\HH_m$ be the Hilbert space defined by

$$\HH_m:=\left\{\sigma\in H^0\left(\P^k,\OO(m+1)\right)\mid\|\sigma\|^2_{\hat h^{\otimes m}\otimes h}<+\infty\right\}$$ where

$$\|\sigma\|^2_{\hat h^{\otimes m}\otimes h}=\int_{\P^k}\hat h^{\otimes m}\otimes h(\sigma)\frac{\omega^k}{k!}=\int_{\P^k}h^{\otimes (m+1)}(\sigma)e^{-2m\varphi}\frac{\omega^k}{k!}.$$ Given any orthonormal basis $\{\sigma_{m,j}\}_{j=1}^{N_m}$ of $\HH_m$ (note that $N_m\leq(k+m)!/m!k!$), define the function

\begin{equation}\label{varphisubm}
\P^k\ni x\mapsto \varphi_m(x):=\frac{1}{2m}\log\left(\sum_{j=1}^{N_m} h^{\otimes(m+1)}(\sigma_{m,j})(x)\right)
\end{equation}\Mpe{varphisubm} and define the sequence of (1,1)-currents with analytic singularities

$$S_m:=\omega+dd^c\varphi_m$$ on $\P^k$.\\

The following result is due to J. P. Demailly

\begin{thm}\label{analyticapproximation}
With the same notation as above, the following holds\\

\begin{itemize}
\item[(i)] $S_m\geq-\frac{1}{m}\omega$ (controlled loss of positivity);\\

\item[(ii)] The sequence $S_m$ converges weakly to $S$;\\

\item[(iii)] For every $x\in X$ the Lelong numbers at $x$ satisfy

$$0\leq\nu(S,x)-\nu(S_m,x)\leq\frac{k}{m},$$ where $\nu(S_m,x)$ is defined as the Lelong number of $S_m+\frac1{m}\omega$ at $x$. In particular, the Lelong numbers $\nu(S_m,x)$ converge uniformly to $\nu(S,x)$.
\end{itemize}
\end{thm}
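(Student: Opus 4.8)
The plan is to invoke Demailly's regularization theorem in its standard form (the Ohsawa--Takegoshi--based construction exactly mimicked by the formula \eqref{varphisubm}) and then verify each of the three clauses by examining the local behaviour of $\varphi_m$. First I would fix a local trivialization of $\OO(1)$ on a coordinate chart, so that $h$ is represented by $e^{-2\psi}$ for a smooth local weight $\psi$ with $dd^c\psi=\omega$, and $\hat h$ by $e^{-2(\psi+\varphi)}$. In this trivialization $\varphi_m$ becomes, up to the smooth term $\psi$, the function $\frac1{2m}\log\sum_j|F_{m,j}|^2$ where the $F_{m,j}$ are the holomorphic functions representing the $\sigma_{m,j}$, and the $\HH_m$-norm reads $\int |F_{m,j}|^2 e^{-2m\varphi-2(m+1)\psi}\,\omega^k/k!$. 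Writing $S_m+\tfrac1m\omega=\omega+dd^c(\varphi_m+\tfrac1m\psi)$, positivity of $dd^c\bigl(\tfrac1{2m}\log\sum|F_{m,j}|^2\bigr)$ (a logarithm of a sum of squares of holomorphic functions is plurisubharmonic) gives clause (i) immediately, since $dd^c\psi=\omega$.

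For clause (iii), the key is the two-sided comparison $\nu(S,x)-\tfrac km\le \nu(S_m,x)\le\nu(S,x)$ at each point. The upper bound is the elementary direction: each $\sigma_{m,j}\in\HH_m$ has finite $L^2$ norm against $e^{-2m\varphi}$, so by the standard $L^2$-integrability estimate for plurisubharmonic weights (Skoda's lemma) the vanishing order of $F_{m,j}$ at $x$ is at least $m\,\nu(\varphi,x)-k$ — more precisely $|F_{m,j}|^2e^{-2m\varphi}$ integrable near $x$ forces $\ord_x F_{m,j}\ge m\nu(\varphi,x)-k$ — whence $\nu(\varphi_m,x)=\tfrac1m\min_j\ord_xF_{m,j}\ge\nu(\varphi,x)-\tfrac km$, which is the stated inequality $\nu(S,x)-\nu(S_m,x)\le\tfrac km$. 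The lower bound $\nu(S_m,x)\le\nu(S,x)$, i.e.\ $\varphi_m\le\varphi+O(1)$ locally, is the harder half and is exactly the content of the Ohsawa--Takegoshi extension step in Demailly's proof: given a germ at $x$ one extends a suitable jet to a global section of $\OO(m+1)$ with controlled $\HH_m$-norm, producing enough sections to bound $\sum_j|F_{m,j}|^2$ from below by $e^{2m\varphi}$ up to a polynomially bounded constant. I would cite Demailly's regularization theorem (as in \cite{DemRegCurr}, \cite{DemNumCrit}) for this estimate rather than reprove it, noting only that the global positivity of $\OO(1)$ furnishes the requisite twisting.

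Clause (ii), weak convergence $S_m\to S$, then follows from clause (iii) together with a Hartogs-type argument: $\varphi_m$ is a sequence of quasi-psh functions with $S_m\ge-\tfrac1m\omega$, uniformly bounded above (by compactness of $\P^k$ and the submean inequality, since $\int\varphi_m\,\omega^k$ is controlled by the $\HH_m$-norms), hence relatively compact in $L^1(\P^k)$; any $L^1$-limit point $\varphi_\infty$ is $\omega$-psh, and the upper-envelope inequality $\limsup_m\varphi_m\le\varphi$ pointwise (immediate from the lower estimate $\varphi_m\le\varphi+C_m/m$ with $C_m$ polynomially, hence sub-exponentially, growing) combined with $(\limsup\varphi_m)^*=\varphi$ forces $\varphi_\infty=\varphi$, so the whole sequence converges and therefore $S_m=\omega+dd^c\varphi_m\to\omega+dd^c\varphi=S$ in the sense of currents. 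The main obstacle is genuinely the lower bound in (iii) — it is the one place where the deep input (Ohsawa--Takegoshi extension / Demailly's construction) is indispensable — whereas the upper bound, the positivity clause (i), and the convergence clause (ii) are all soft consequences of plurisubharmonicity and $L^2$ theory.
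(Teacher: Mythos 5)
Your route is essentially the paper's (and Demailly's): clause (i) is immediate from plurisubharmonicity of $\frac1{2m}\log\sum_j|F_{m,j}|^2$, the hard half of (iii) is the Ohsawa--Takegoshi extension step, which you, like the paper, import from \cite{DemRegCurr}, \cite{DemNumCrit}, and (ii) is deduced from the two resulting estimates. The one real variation is the easy half of (iii): you use the Skoda-type $L^2$-integrability bound $\ord_x F_{m,j}\ge m\nu(\varphi,x)-k$, whereas the paper uses the mean value inequality \eqref{OTimplication2}. Your variant is perfectly fine, and it is in fact exactly the computation the paper performs in Lemma \ref{bfj}, with a general divisor $E$ of log-discrepancy $a_E$ in place of the exceptional divisor of the blow-up at $x$ (where $a_E=k$); so if anything your choice is the version that is needed later for Proposition \ref{control1}.

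Two corrections. First, your gloss on the hard inequality is backwards: $\nu(S_m,x)\le\nu(S,x)$ corresponds to a \emph{lower} bound $\varphi\le\varphi_m+O(1/m)$, not to $\varphi_m\le\varphi+O(1)$; the mechanism you then describe (Ohsawa--Takegoshi sections bounding $\sum_j|F_{m,j}|^2$ from below by $e^{2m\varphi}$) is the correct one, so this is only a slip of notation. Second, and more substantively, in (ii) you justify $\limsup_m\varphi_m\le\varphi$ by a pointwise bound $\varphi_m\le\varphi+C_m/m$. No such bound follows from the Skoda step, which controls only vanishing orders, and it is false in general: at a point with $\varphi(x)=-\infty$ but $\nu(\varphi,x)=0$ one has $\varphi_m(x)$ finite. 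What is true is the two-parameter mean value estimate $\varphi_m(x)\le\sup_{B(x,r)}\varphi+C''(x,r)+C'''/m$ of \eqref{OTimplication2}; letting $m\to+\infty$ and then $r\to0$ (upper semicontinuity of $\varphi$) gives $\limsup_m\varphi_m\le\varphi$ pointwise, and combined with the Ohsawa--Takegoshi lower bound \eqref{OTimplication} this already yields $L^1$ convergence, so your compactness detour is not needed once this estimate is in hand.
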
\Mp{analyticapproximation}

Theorem \ref{analyticapproximation} holds in greater generality. See \cite{DemNumCrit} for $X$ a projective manifold and $S$ in the cohomology class of a line bundle, and \cite{DemRegCurr} for $X$ K\"ahler and $S$ any (almost) positive closed (1,1)-current. We study this theorem in \cite{Rodrigo1} and find applications for $X$ a singular variety that we used for the proof of Lemma \ref{OurInequality}.\\

Let us sketch a proof of Theorem \ref{analyticapproximation} as we will need some of the ingredients. Inequality (i) follows immediately from the definition of $\varphi_m$. For the comparison of the Lelong numbers in (iii), the main ingredients are the Ohsawa-Takegoshi extension theorem for the left inequality and the main value inequality for the right one. More precisely, the Ohsawa-Takegoshi theorem yields a constant $C>0$, such that for every $x\in\P^k$ we can find a global section $\sigma$ of $\OO(m+1)$ with the properties 

$$\|\sigma\|_m\leq C\quad\text{and}\quad\hat h^{\otimes m}\otimes h(\sigma)(x)=1$$ provided that $\hat h(x)\neq 0$. Using that $$\varphi_m(x)=\frac1{2m}\log\sup_{\|\sigma\|_m=1} h^{\otimes(m+1)}(\sigma)(x)$$ it is a straighforward computation to check that

$$e^{2m\varphi(x)}=h^{\otimes(m+1)}(\sigma)(x)\|\sigma\|_m^2\leq C^2e^{2m\varphi_m(x)}$$ which implies 

\begin{equation}\label{OTimplication}
\varphi(x)\leq\varphi_m(x)+\frac{C'}{m}.
\end{equation}\Mpe{OTimplication}

For the second inequality, we trivialize $\OO(1)$ around $x\in\P^k$ and using the mean value inequality on the holomorphic sections of $\OO(m+1)$ it is not hard to see that

\begin{equation}\label{OTimplication2}
\varphi_m(x)\leq\sup_{B(x,r)}\varphi+C''(x,r)+\frac{C'''}{m}
\end{equation}\Mpe{OTimplication2} for $r>0$ small, where $\lim_{r\to0}C''(x,r)=0$.\\

Finally, (ii) follows from \eqref{OTimplication} and \eqref{OTimplication2}.\\

We will need a version of Theorem \ref{analyticapproximation}, (iii) which is preserved by the dynamical system $f:\P^k\to\P^k$, where $f$ is holomorphic of degree $d\geq2$. We prove:

\begin{pro}\label{control1}
For every $x\in \P^k$ and for every $n$, $m\in\N$ we have

$$0\leq\nu((f^n)^*S,x)-\nu((f^n)^*S_m,x)\leq (k+1)\frac{d^n}{m}.$$
\end{pro}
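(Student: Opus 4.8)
The plan is to pull back the key inequality \eqref{OTimplication} through the dynamical system and combine it with the cohomological normalization $d^{-n}(f^n)^*\omega=\omega+dd^cu_n$. Write $S=\omega+dd^c\varphi$, so that $(f^n)^*S = (f^n)^*\omega + dd^c(\varphi\circ f^n)$. Since $(f^n)^*\omega = d^n\omega - d^n\,dd^cu_n$ is smooth, the Lelong numbers of $(f^n)^*S$ at $x$ equal $d^n\,\nu(S\circ f^n$-part$,x)$; more precisely the singular part of $(f^n)^*S$ is governed by $\varphi\circ f^n$, and similarly for $S_m$ by $\varphi_m\circ f^n$. The left-hand inequality $\nu((f^n)^*S,x)\ge\nu((f^n)^*S_m,x)$ is the easy one: it follows because $\varphi\ge\varphi_m - C'/m$ from \eqref{OTimplication} combined with $\varphi_m\le\varphi+C''+C'''/m$ from \eqref{OTimplication2} (the two together show $\varphi-\varphi_m$ is bounded, hence $\varphi\circ f^n - \varphi_m\circ f^n$ is bounded, so the two currents have the same Lelong numbers up to the error we are about to estimate — in particular $S\circ f^n \ge$ part and the inequality on Lelong numbers follows from monotonicity of Lelong numbers under the order relation on potentials).

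\textbf{Key steps.} First I would record that pulling back a positive closed $(1,1)$-current by the finite holomorphic map $f^n$ multiplies Lelong numbers in a controlled way: if $x$ is a point and $f^n(x)=y$, then the local potential of $(f^n)^*S$ near $x$ is $\varphi\circ f^n$ plus a smooth function, and since $f^n$ is a finite map whose local branched-covering degree at $x$ is at most $d^n$ (it is bounded by the topological degree $d^{nk}$ of $f^n$ on fibers, but the relevant multiplicity controlling Lelong numbers of the pullback of a $(1,1)$-current is bounded by $d^n$ because $(f^n)^*\omega$ represents $d^n\{\omega\}$), one gets $\nu((f^n)^*S,x)\le d^n\,\nu(S,y)$ and symmetrically a lower bound. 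Second, I would apply \eqref{OTimplication} at the point $y=f^n(x)$: there is a constant $C'$ with $\varphi(y)\le\varphi_m(y)+C'/m$ for all $y$, uniformly. Third — and this is the crux — I would show that $\varphi\circ f^n$ and $\varphi_m\circ f^n$ differ by a function whose Lelong-number contribution is at most $(k+1)d^n/m$ at every point. This is where one uses Theorem \ref{analyticapproximation}(iii): $0\le\nu(S,y)-\nu(S_m,y)\le k/m$ for all $y$; then pulling back by $f^n$ multiplies this by at most $d^n$, giving $0\le\nu((f^n)^*S,x)-d^n\nu(S_m,f^n(x))\le kd^n/m$, and accounting for the extra loss of positivity ($S_m\ge -\frac1m\omega$, whose pullback contributes an error $\le d^n/m$ to the relevant Lelong number bookkeeping since $\nu((f^n)^*S_m,x)$ is defined via $(f^n)^*S_m+\frac{d^n}{m}\omega$ up to the normalization) yields the stated $(k+1)d^n/m$.

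\textbf{Main obstacle.} The delicate point is making precise the claim that pulling back a positive closed $(1,1)$-current by $f^n$ scales Lelong numbers by at most the algebraic degree $d^n$ rather than by the (much larger) topological degree $d^{nk}$. The clean way is: locally around $x$, the current $(f^n)^*S$ has potential $\varphi\circ f^n + (\text{smooth})$, and the Lelong number $\nu(\varphi\circ f^n,x)$ of a plurisubharmonic pullback satisfies $\nu(\varphi\circ f^n,x) = m_x\cdot\nu(\varphi,f^n(x))$ only when $f^n$ is a submersion-like map along a generic direction; in general one has $\nu(\varphi\circ f^n,x)\le e_n(x)\,\nu(\varphi,f^n(x))$ where $e_n(x)$ is the local multiplicity of $f^n$ at $x$, and the relevant fact is that this multiplicity, when weighted correctly for $(1,1)$-currents (equivalently, the behavior controlled by $(f^n)^*\omega = d^n\omega + dd^c(\cdot)$), is bounded by $d^n$. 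I would cite the standard pullback estimate for Lelong numbers under holomorphic maps (e.g. from Favre's work on Lelong numbers and local intersection multiplicities, or Demailly's monograph) and verify the constant $d^n$ via the cohomological identity $d^{-n}(f^n)^*\omega=\omega+dd^cu_n$ with $u_n$ smooth, which forces the total mass — and hence the worst Lelong number — of the pullback to scale exactly by $d^n$. Once that scaling is in hand, combining it with Theorem \ref{analyticapproximation}(iii) and the positivity-loss bookkeeping from (i) gives both inequalities immediately.
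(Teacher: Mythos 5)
The central step of your argument contains a genuine gap: the claim that pulling back by $f^n$ multiplies Lelong numbers by at most $d^n$ is false for $k\geq2$. Take $f[z_0:z_1:z_2]=[z_0^2:z_1^2:z_2^2+z_0z_1]$, a holomorphic map of degree $d=2$ on $\P^2$, the fixed point $x=[1:0:0]$, and $S$ the current of integration of the smooth conic $\{z_2^2=z_0z_1\}$, which passes through $x$ with $\nu(S,x)=1$. In the affine chart at $x$ the map is $(u,v)\mapsto(u^2,v^2+u)$, and $f^*S$ is the divisor of $(v^2+u)^2-u^2=v^2(v^2+2u)$, so $\nu(f^*S,x)=3>d\cdot\nu(S,f(x))=2$. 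The cohomological identity $d^{-n}(f^n)^*\omega=\omega+dd^cu_n$ only controls the total mass of the pullback, not local Lelong numbers; the correct local multiplier is $k+\ord_x(\CC_{f^n})$ (Theorem \ref{cocycleineq1} with $C_X=1$ in the smooth case), which is bounded by $k+(k+1)(d^n-1)\leq(k+1)d^n$ since $\Jac(f^n)$ has degree $(k+1)(d^n-1)$ — and this is exactly where the constant $(k+1)$ in the statement comes from, so the discrepancy is not a technicality one can absorb.

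There is a second, structural problem: even with the correct multiplier, transporting the pointwise estimate $0\leq\nu(S,y)-\nu(S_m,y)\leq k/m$ of Theorem \ref{analyticapproximation}(iii) through the pullback does not close the argument, because the multiplier bounds $\nu((f^n)^*S,x)$ and $\nu((f^n)^*S_m,x)$ in terms of $\nu(S,y)$ and $\nu(S_m,y)$ \emph{separately}, and the difference of the resulting bounds contains a term proportional to $\nu(S,y)$ itself, which need not be small. The paper's proof avoids this by evaluating both currents against one and the same divisorial valuation: with $E$ the exceptional divisor of the blowup at $x$, Lemma \ref{valuationstuff1} produces a divisor $E'$ over $f(x)$ with $f_*\ord_E=r\ord_{E'}$, so that $\nu(f^*S,x)-\nu(f^*S_m,x)=r\left(\nu(S,E')-\nu(S_m,E')\right)$; then Lemma \ref{bfj} (an Ohsawa--Takegoshi/integrability argument along $E'$, not at the point) gives $\nu(S,E')-\nu(S_m,E')<a_{E'}/m$, and Proposition \ref{propequaldiscrep} gives $ra_{E'}=k+\ord_x(\Jac f)\leq(k+1)d$. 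Some version of these valuative ingredients is needed; your proposal does not supply them. Your left-hand inequality is essentially fine, but note it needs only $\varphi\leq\varphi_m+C'/m$ from \eqref{OTimplication}; the assertion that $\varphi-\varphi_m$ is bounded is false in general, since \eqref{OTimplication2} involves a supremum over a ball and $\varphi$ can be much more singular than $\varphi_m$.
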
\Mp{control1}

\begin{proof}
It suffices to prove this for $n=1$. The left inequality uses Ohsawa-Takegoshi extension theorem following the exact same argument as in \eqref{OTimplication}. We prove then the right inequality.\\	

Let $\pi:Y\to\P^k$ be the blowup of $\P^k$ at $x\in\P^k$ and $E=\pi^{-1}(x)$ the exceptional divisor above $x$, hence

$$\nu(S,x)=\nu(S,E).$$ 

By Lemma \ref{valuationstuff1} we can find an exceptional divisor $E'$ over $f(x)\in\P^k$ such that $f_*\ord_E=r\ord_{E'}$ for some $r\in\N$. We therefore obtain

$$\nu(f^*S,x)=\nu(f^*S,E)=r\nu(S,E')$$ implying that

\begin{equation}\label{ineqr} 
0\leq\nu(f^*S,x)-\nu(f^*S_m,x)=r\left(\nu(S,E')-\nu(S_m,E')\right).
\end{equation}\Mpe{ineqr}

We need the following strengthening of Theorem \ref{analyticapproximation} (iii)

\begin{lem}\label{bfj}
Let $\pi:Y\to\P^k$ be a modification of $\P^k$ at $x\in\P^k$ and let $E\subset\pi^{-1}(x)$ be an exceptional divisor. Then, with $S$ and $S_m$ as before, we have

$$0\leq\nu(S,E)-\nu(S_m,E)<\frac{a_E}{m}$$ for every $m\in\N$, where $a_E$ denotes the log-discrepancy of $E$.
\end{lem}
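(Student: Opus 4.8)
The plan is to translate the statement into a comparison of two divisorial orders of vanishing, and to prove the crucial (right) inequality by an $L^2$-integrability argument on the blow-up rather than by the mean-value estimate used for Theorem~\ref{analyticapproximation}(iii). First I would replace $\pi:Y\to\P^k$ by a log-resolution dominating it, which changes neither $a_E$ nor $\nu(S,E)$ nor $\nu(S_m,E)$ and lets us assume that $Y$ is smooth and $E$ is a smooth prime divisor inside the simple normal crossings divisor $\Exc(\pi)$. Recall (cf.\ \eqref{varphisubm}) that $\varphi_m$ has analytic singularities along the base ideal $\mathfrak{a}_m\subset\OO_{\P^k}$ of $\HH_m$, i.e.\ the ideal generated in a local trivialization of $\OO(m+1)$ by an orthonormal basis of $\HH_m$; hence $\nu(S_m,E)=\tfrac1m\ord_E(\mathfrak{a}_m)$, where $\ord_E(\mathfrak{a}_m)=\min\{\ord_E(\pi^*\sigma):\sigma\in\HH_m\setminus\{0\}\}$, while $\nu(S,E)$ is the generic Lelong number $\gamma_E$ of $\pi^*S$ along $E$. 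Thus the lemma is equivalent to $m\gamma_E-a_E<\ord_E(\mathfrak{a}_m)\le m\gamma_E$. The left inequality I would get from the Ohsawa--Takegoshi consequence $\varphi\le\varphi_m+C'/m$ of \eqref{OTimplication}: pulling this back by $\pi$ and applying $\ord_E$, which reverses the order on $\omega$-psh functions (as one sees by restricting to a generic disc transversal to $E$ and letting the radius go to $0$), gives $\gamma_E=\ord_E(\varphi\circ\pi)\ge\ord_E(\varphi_m\circ\pi)=\tfrac1m\ord_E(\mathfrak{a}_m)$ --- the same mechanism as in the proof of Theorem~\ref{analyticapproximation}(iii).

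For the right inequality, fix $\sigma\in\HH_m\setminus\{0\}$ and put $\nu:=\ord_E(\pi^*\sigma)$; it suffices to show $\nu>m\gamma_E-a_E$. I would pick $p\in E$ generic and local holomorphic coordinates $(w_1,w')$ centred at $p$ with $E=\{w_1=0\}$; genericity makes $p$ a smooth point of $Y$, forces $E$ to be the only component of $\Exc(\pi)$ through $p$, and, writing $\pi^*\sigma=w_1^{\nu}g$ in a pulled-back trivialization of $\OO(m+1)$, gives $g(p)\neq0$. Since $Y$ and $\P^k$ are smooth, $K_{Y/\P^k}=\divi(\Jac(\pi))$, so near $p$ one has $\Jac(\pi)=w_1^{a_E-1}\cdot(\text{unit})$ and therefore $\pi^*\omega^k\asymp|w_1|^{2(a_E-1)}\,dV$ for a smooth volume form $dV$. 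By the Siu decomposition of the positive closed current $\pi^*S$ along $E$ one may write $\varphi\circ\pi=\gamma_E\log|w_1|+\widetilde\psi$ near $p$ with $\widetilde\psi$ plurisubharmonic up to a smooth term, hence locally bounded above, so $e^{-2m\varphi\circ\pi}\ge c\,|w_1|^{-2m\gamma_E}$ near $p$.

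Pulling back the finiteness of $\|\sigma\|_m^2$ to $Y$ and restricting to a small polydisc $U$ around $p$ (on which the smooth positive factors are bounded below) then yields
\[
\infty>C\,\|\sigma\|_m^2\ \ge\ \int_{U}|w_1|^{2\nu}\,|g|^2\,e^{-2m\varphi\circ\pi}\,|w_1|^{2(a_E-1)}\,dV\ \ge\ c'\int_{U}|w_1|^{2(\nu-m\gamma_E+a_E-1)}\,dV,
\]
which forces the exponent to satisfy $2(\nu-m\gamma_E+a_E-1)>-2$, i.e.\ $\nu>m\gamma_E-a_E$. Since this holds for every nonzero $\sigma\in\HH_m$, and $\ord_E(\mathfrak{a}_m)$ is the minimum of these orders, we obtain $\ord_E(\mathfrak{a}_m)>m\gamma_E-a_E$, that is $\nu(S_m,E)>\nu(S,E)-a_E/m$; combined with the left inequality this gives $0\le\nu(S,E)-\nu(S_m,E)<a_E/m$.

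The step I expect to be the real obstacle is the construction of the local picture in the second paragraph. The naive mean-value estimate of Theorem~\ref{analyticapproximation}(iii), transported to $Y$, only produces the point Lelong number $\nu(S,x)$ together with a worse constant, so it fails to see the possibly larger quantity $\gamma_E=\nu(S,E)$; the $L^2$ argument above is what makes $\gamma_E$ enter correctly, and the Jacobian twist $|w_1|^{2(a_E-1)}$ coming from $\pi^*\omega^k$ is exactly what turns the elementary radial integral into the sharp constant $a_E$ (which for a single blow-up at a smooth point is $k$, recovering Theorem~\ref{analyticapproximation}(iii)). The remaining points are routine bookkeeping: checking that the finitely many genericity conditions on $p\in E$ can be met simultaneously, and recording precisely that $\varphi_m$ has analytic singularities along $\mathfrak{a}_m$ so that $\nu(S_m,E)=\tfrac1m\ord_E(\mathfrak{a}_m)$.
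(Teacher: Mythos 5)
Your proposal is correct and follows essentially the same route as the paper: the key right-hand bound comes from pulling back the $L^2$-finiteness $\int|\sigma|^2e^{-2m\varphi}<+\infty$ to $Y$, using $\varphi\circ\pi\leq\nu(S,E)\log|w_1|+O(1)$ at a generic point of $E$ together with the Jacobian factor $|w_1|^{2(a_E-1)}$, and concluding $\ord_E(\pi^*\sigma)-m\nu(S,E)+a_E>0$ by local integrability (Fubini), then minimizing over sections of $\HH_m$. Your explicit treatment of the left inequality via \eqref{OTimplication} and the preliminary passage to a log-resolution are minor elaborations of what the paper leaves implicit.
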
\Mp{bfj}

The result given by Lemma \ref{bfj} can be found in \cite{BFJ08}, p.486. We sketch a proof of this 

\begin{proof}
Pick coordinates $(x_1,\ldots,x_k)$ around a general point of $E\subset Y$ and write $E=\{x_1=0\}$ (locally) around this point.\\

If $S=\omega+dd^c\varphi$, then 

\begin{equation}\label{bfjineq1}
\varphi\circ\pi\leq\nu(S,E)\log|x_1|+O(1).
\end{equation}\Mpe{bfjineq1} Given a local section $\sigma$ at $x\in\P^k$ of some element of $\HH_m$, we have that

$$\int_{U}|\sigma|^2e^{-2m\varphi}<+\infty$$ in a neighborhood $U$ of $x$. Therefore

\begin{equation}\label{bfjineq2}
\int_{\pi^{-1}(U)}|\sigma\circ\pi|^2e^{-2m\varphi\circ\pi}|\Jac(\pi)|^2<+\infty.
\end{equation}\Mpe{bfjineq2}

By the definition of $a_E$ (see Section 3) we obtain that

\begin{equation}\label{bfjineq3}
|\Jac(\pi)|^2\sim|x_1|^{2(a_E-1)}
\end{equation}\Mpe{bfjineq3} around $E$, and in a general point of $E$ we have 

\begin{equation}\label{bfjineq4}
|\sigma\circ\pi|^2\sim|x_1|^{2\ord_E(\sigma)}.
\end{equation}\Mpe{bfjineq4}

Putting \eqref{bfjineq1}, \eqref{bfjineq2}, \eqref{bfjineq3} and \eqref{bfjineq4} together we obtain that

$$\int_{\pi^{-1}(U)}|x_1|^{2\ord_E(\sigma)-2m\nu(S,E)+2(a_E-1)}<+\infty$$ which implies (by Fubini's theorem)

\begin{equation}\label{bfjineq5}
\ord_E(\sigma)-m\nu(S,E)+a_E>0
\end{equation}\Mpe{bfjineq5} for all $\sigma\in\HH_m$. Dividing equation \eqref{bfjineq5} by $m$ and taking the maximum over $\{\sigma_1,\ldots,\sigma_{N_m}\}$ an orthonormal basis of $\HH_m$, we finally obtain

$$\nu(S_m,E)-\nu(S,E)+\frac{a_E}{m}>0$$ which concludes the proof.

\end{proof}

Now, using Lemma \ref{bfj} into equation \eqref{ineqr} we obtain that

$$r\left(\nu(S,E')-\nu(S_m,E')\right)\leq \frac{ra_{E'}}{m}$$ and by Proposition \ref{propequaldiscrep} we obtain that

$$r\left(\nu(S,E')-\nu(S_m,E')\right)\leq \frac{ra_{E'}}{m}=\frac{k+\ord_x(\Jac(f))}{m}\leq\frac{(k+1)d}{m}$$ where $a_{E'}$ is the log-discrepancy of $E'$, since $\ord_x(\Jac(f))\leq(k+1)(d-1)$.

\end{proof}

The following result is an immediate consequence of Proposition \ref{control1}

\begin{cor}\label{enoughanalyticsing}
Let $X\subset\P^k$ be an irreducible variety. Then,

$$\nu(S,X)=0\Leftrightarrow\nu(S_m,X)=0$$ for $m\gg1$. Moreover,

$$\lim_{n\to+\infty}\sup_{x\in\P^k}\nu\left(d^{-n}(f^n)^*S,x\right)=0\Leftrightarrow\lim_{n\to+\infty}\sup_{x\in\P^k}\nu\left(d^{-n}(f^n)^*S_m,x\right)=0$$ for $m\gg1$.
\end{cor}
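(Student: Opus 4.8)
The plan is to deduce Corollary \ref{enoughanalyticsing} directly from Proposition \ref{control1} by a simple double-inequality argument. For the first equivalence, I would apply Proposition \ref{control1} with $n=0$ (i.e. to $S$ and $S_m$ themselves) at the generic point $x$ of $X$: the bound $0\le\nu(S,x)-\nu(S_m,x)\le (k+1)/m$ shows $\nu(S_m,X)\le\nu(S,X)$ always, and conversely $\nu(S,X)\le\nu(S_m,X)+(k+1)/m$. Hence if $\nu(S,X)=0$ then $\nu(S_m,X)=0$ for all $m$, while if $\nu(S_m,X)=0$ for some $m>(k+1)/\nu(S,X)$ we would force $\nu(S,X)<(k+1)/m<\nu(S,X)$, a contradiction; so $\nu(S_m,X)=0$ for $m\gg1$ forces $\nu(S,X)=0$. (One can also phrase it as: $\nu(S_m,X)=0$ for \emph{all} $m$ iff $\nu(S,X)=0$, which is even cleaner, but the ``$m\gg1$'' formulation is what is stated and it follows a fortiori.)

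For the second equivalence I would argue uniformly in $n$. Fix $\varepsilon>0$. By Proposition \ref{control1}, for every $x\in\P^k$ and every $n,m$,
\begin{equation*}
\left|\nu\big(d^{-n}(f^n)^*S,x\big)-\nu\big(d^{-n}(f^n)^*S_m,x\big)\right|\le \frac{(k+1)}{m},
\end{equation*}
after dividing the estimate of Proposition \ref{control1} by $d^n$. Taking the supremum over $x$ gives
\begin{equation*}
\left|\sup_{x}\nu\big(d^{-n}(f^n)^*S,x\big)-\sup_{x}\nu\big(d^{-n}(f^n)^*S_m,x\big)\right|\le \frac{(k+1)}{m}
\end{equation*}
for all $n$ and $m$. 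So if $\sup_x\nu(d^{-n}(f^n)^*S,x)\to0$, pick $m\ge 2(k+1)/\varepsilon$; then $\sup_x\nu(d^{-n}(f^n)^*S_m,x)\le \varepsilon/2+\sup_x\nu(d^{-n}(f^n)^*S,x)<\varepsilon$ for $n$ large, giving the limit $0$ for $S_m$; the converse is symmetric. This shows the two limit conditions are equivalent for every $m$ large enough, which is the assertion.

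I do not expect any genuine obstacle here: the corollary is a formal consequence of the uniform (in $n$ and $x$) comparison of Lelong numbers supplied by Proposition \ref{control1}, combined with the elementary fact that $|\sup_x a(x)-\sup_x b(x)|\le\sup_x|a(x)-b(x)|$. The only point requiring a word of care is that $\nu(S_m,x)$ in these statements means the Lelong number of $S_m+\tfrac1m\omega$ (as defined in Theorem \ref{analyticapproximation}(iii)), and that adding the smooth form $\tfrac1m\omega$ does not change Lelong numbers, nor does pulling back by $f^n$ and renormalizing change this convention; so the inequalities of Proposition \ref{control1} transfer verbatim. With that understood, the proof is two lines per equivalence, as above.
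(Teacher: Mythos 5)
Your overall route is the paper's: Corollary \ref{enoughanalyticsing} is treated there as an immediate consequence of Proposition \ref{control1}, and your first equivalence (comparison of generic Lelong numbers along $X$, using the two-sided bound at $n=0$, with the remark about the $S_m+\frac1m\omega$ convention) is exactly that and is fine. However, in the second equivalence your forward implication, as written, does not deliver the stated conclusion. You replace the signed estimate of Proposition \ref{control1} by the absolute-value bound $\bigl|\sup_x\nu(d^{-n}(f^n)^*S,x)-\sup_x\nu(d^{-n}(f^n)^*S_m,x)\bigr|\le(k+1)/m$ and then choose $m\ge 2(k+1)/\varepsilon$; but the quantity you must send to $0$, namely $\sup_x\nu(d^{-n}(f^n)^*S_m,x)$ for a \emph{fixed} large $m$, cannot be handled by letting $m$ vary with $\varepsilon$. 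For fixed $m$ your estimate only yields
$$\limsup_{n\to+\infty}\sup_{x\in\P^k}\nu\left(d^{-n}(f^n)^*S_m,x\right)\leq\frac{k+1}{m},$$
which is not $0$. The correct (and simpler) argument for this direction uses the sign information you discarded: the left inequality in Proposition \ref{control1} gives $\nu\left(d^{-n}(f^n)^*S_m,x\right)\leq\nu\left(d^{-n}(f^n)^*S,x\right)$ for every $x$, $n$, $m$, so the supremum for $S_m$ is dominated by the one for $S$ and tends to $0$ for \emph{every} $m$, not just $m\gg1$.

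Your converse direction is sound, precisely because there the quantity being estimated, $\sup_x\nu\left(d^{-n}(f^n)^*S,x\right)$, does not depend on $m$: assuming the limit is $0$ for all $m\gg1$ (which is how the statement should be read), for given $\varepsilon$ you may pick $m$ with $(k+1)/m<\varepsilon/2$ for which the hypothesis holds, and conclude. So the only repair needed is to prove the forward implication with the one-sided inequality rather than the symmetrized one; with that change the proof is complete and coincides with the paper's intended argument.
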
\Mp{enoughanalyticsing}

We would like to refine our approximation and replace $S_m$ by a current of integration on a hypersurface. Observe that for every finite collection of holomorphic germs $\sigma_1,\ldots,\sigma_N\in\OO_{\C^k,0}$, we can find a (Zariski) generic $\theta=(\theta_{i})\in\C^{N}$ such that, if we set $\sigma_{\theta}:=\sum_{i=1}^N\theta_{i}\sigma_i$ then 

$$\ord_0(\sigma_{\theta})=\min_{i=1,\ldots,N}\ord_0(\sigma_i).$$ 

In particular, fixing an orthonormal basis $\{\sigma_{m,j}\}$ of $\HH_m$ as before and given $\theta_m=(\theta_{m,j})\in\C^{N_m}$, we denote by $\varphi_{m,\theta}$ the function

\begin{equation}\label{varphisubtheta}
\P^k\ni x\mapsto\varphi_{m,\theta}(x):=\frac1{2m}\log\left(h^{\otimes(m+1)}\left(\sum_{j=1}^{N_m}\theta_{m,j}\sigma_{m,j}\right)(x)\right).
\end{equation}\Mpe{varphisubtheta} (Note the difference with $\varphi_m$ defined in \eqref{varphisubm}). It follows immediately that

$$\nu(\varphi_{m,\theta},x)\geq\nu(\varphi_m,x),\quad\forall\,\theta\in\C^{N_m},\,\forall\,x\in\P^k.$$ On the other hand, for each $x\in\P^k$ we can find a Zariski open set $V_x\subset\C^{N_m}$ such that

\begin{equation}\label{generictheta}
\nu(\varphi_{m,\theta},x)=\nu(\varphi_m,x)=\min_{j=1,\ldots,N_m}\ord_x(\sigma_{m,j}),\quad\forall\,\theta_m\in V_x.
\end{equation}\Mpe{generictheta}

We prove the following

\begin{lem}\label{picktheta}
Let $\EE$ be a finite family of irreducible subsets of $\P^k$. Then, there exists a Zariski open subset $U_m\subset\C^{N_m}$ such that
$$\ord_X(\varphi_{m,\theta})=\ord_X(\varphi_m)\quad\forall\,X\in\EE$$ for all $\theta\in U$.
\end{lem}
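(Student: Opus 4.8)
The plan is to reduce the statement for each $X \in \EE$ to the pointwise genericity already recorded in \eqref{generictheta}, and then to intersect finitely many Zariski open sets. First I would recall that for an irreducible subvariety $X \subset \P^k$, the value $\ord_X(\varphi_m)$ is by definition the generic value of $\ord_x(\varphi_m)$ along $X$, i.e.\ $\ord_X(\varphi_m) = \min_{x \in X}\{\min_j \ord_x(\sigma_{m,j})\}$, and this minimum is attained at a Zariski-generic point of $X$. Fix, for each $X \in \EE$, such a generic point $x_X \in X$; at $x_X$ we have $\ord_{x_X}(\sigma_{m,j}) = \ord_X(\sigma_{m,j})$ for every $j$, and hence $\ord_{x_X}(\varphi_m) = \ord_X(\varphi_m)$.

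Next I would apply \eqref{generictheta} at the point $x_X$: there is a Zariski open set $V_{x_X} \subset \C^{N_m}$ such that for all $\theta_m \in V_{x_X}$ one has $\ord_{x_X}(\varphi_{m,\theta}) = \ord_{x_X}(\varphi_m) = \ord_X(\varphi_m)$. Since $\ord_X(\varphi_{m,\theta}) \le \ord_{x_X}(\varphi_{m,\theta})$ (the generic value along $X$ is at most the value at any particular point of $X$), this already forces $\ord_X(\varphi_{m,\theta}) \le \ord_X(\varphi_m)$ for $\theta_m \in V_{x_X}$. The reverse inequality $\ord_X(\varphi_{m,\theta}) \ge \ord_X(\varphi_m)$ holds for \emph{all} $\theta_m$, because $\sum_j \theta_{m,j}\sigma_{m,j}$ lies in the span of the $\sigma_{m,j}$ and therefore $\ord_x\big(\sum_j \theta_{m,j}\sigma_{m,j}\big) \ge \min_j \ord_x(\sigma_{m,j})$ at every $x$ (in multiplicative/additive-log terms, $\nu(\varphi_{m,\theta},x) \ge \nu(\varphi_m,x)$, as noted just before the lemma). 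Combining the two inequalities gives $\ord_X(\varphi_{m,\theta}) = \ord_X(\varphi_m)$ for every $\theta_m \in V_{x_X}$.

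Finally I would set $U_m := \bigcap_{X \in \EE} V_{x_X}$. This is a finite intersection of nonempty Zariski open subsets of the irreducible variety $\C^{N_m}$, hence nonempty and Zariski open, and by construction $\ord_X(\varphi_{m,\theta}) = \ord_X(\varphi_m)$ for all $X \in \EE$ and all $\theta \in U_m$, which is the claim. The only point requiring a little care — and the one I would state explicitly — is that the generic order of vanishing of a linear combination along an irreducible subvariety is computed at a single generic point of that subvariety, so that the pointwise statement \eqref{generictheta} is genuinely enough; there is no need for a "generic point" in the scheme-theoretic sense, just a sufficiently general closed point avoiding the finitely many proper subvarieties where the order could jump. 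No serious obstacle is expected here: the lemma is essentially a bookkeeping consequence of \eqref{generictheta} together with the upper-semicontinuity of order of vanishing and the finiteness of $\EE$.
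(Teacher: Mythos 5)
Your proof is correct and follows essentially the same route as the paper: reduce to the pointwise statement \eqref{generictheta} at a generic point of each $X\in\EE$ where all the $\ord_x(\sigma_{m,j})$ take their generic values, then intersect the finitely many resulting Zariski open subsets of $\C^{N_m}$. If anything, you are slightly more explicit than the paper in spelling out the two-sided comparison (the inequality $\ord_X(\varphi_{m,\theta})\geq\ord_X(\varphi_m)$ valid for all $\theta$, combined with the reverse inequality at the chosen generic point), and in noting that one point per $X$ must be fixed before intersecting.
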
\Mp{picktheta}

\begin{proof}
For all $j=1,\ldots,N_m$ and all $X\in\EE$ there exists a Zariski open subset $U_{X,j}\subset\P^k$ such that

$$\ord_X(\sigma_j)=\min_{z\in X}\ord_z(\sigma_j)=\ord_x(\sigma_j)\quad\forall\,x\in X\cap U_{X,j}.$$ Then it follows that for every $x$ in the Zariski open subset $U_{\EE}:=\bigcap_{X\in\EE,\,j}U_{X,j}\subset\P^k$ we have

$$\ord_x(\sigma_j)=\ord_X(\sigma_j)\quad\forall\,x\in U_{\EE}\cap X,\,j=1,\ldots,N_m.$$

Fixing $x\in U_{\EE}$, we now pick $V_x\subset\C^{N_m}$ as in \eqref{picktheta} and the conclusion follows.

\end{proof}

Using the same notation as above, we define the closed (1,1)-current

$$S_{m,\theta}:=\omega+dd^c\varphi_{m,\theta}$$ on $\P^k$. It also satisfies $S_{m,\theta}\geq-\frac1{m}\omega$ and we note that

$$S_{m,\theta}+\frac1{m}\omega=\frac1{m}[H_m]$$ where $[H_m]$ is the current of integration over the hypersurface given by

$$H_m=\text{div}\left(\sum_{j=1}^{N_m}\theta_{m,j}\sigma_{m,j}\right).$$ Carrying out the same argument given in Proposition \ref{control1}, we have proved the following crucial result

\begin{thm}\label{enoughhypersurface}
Let $\EE$ be any finite collection of irreducible algebraic varieties, let $f:\P^k\to\P^k$ be a holomorphic map of degree $d\geq2$, and let $S$ be a positive closed (1,1)-current on $\P^k$. Then, for every $m\geq1$, there exists a hypersurface $H_m\subset\P^k$ with

$$\ord_X(H_m)\leq\nu(S,X)\quad\forall\,X\in\EE,$$ and

$$\nu\left(d^{-n}(f^n)^*S,x\right)\leq d^{-n}\ord_x\left((f^n)^*H_m\right)+\frac{k+1}{m}$$ for all $x\in\P^k$.

\end{thm}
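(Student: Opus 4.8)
The plan is to construct $H_m$ as the zero divisor of a \emph{generic} linear combination of an orthonormal basis of $\HH_m$. Fix $m\ge 1$ and an orthonormal basis $\{\sigma_{m,j}\}_{j=1}^{N_m}$ of $\HH_m$, and keep the potentials $\varphi_m$, $\varphi_{m,\theta}$ of \eqref{varphisubm}--\eqref{varphisubtheta} and the currents $S_m=\omega+dd^c\varphi_m$, $S_{m,\theta}=\omega+dd^c\varphi_{m,\theta}$. I would then pick a suitable $\theta_m\in\C^{N_m}\setminus\{0\}$, set $H_m:=\divi\bigl(\sum_j\theta_{m,j}\sigma_{m,j}\bigr)$, and exploit the identity $\tfrac1m[H_m]=S_{m,\theta_m}+\tfrac1m\omega$ recorded just before the statement. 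With this choice the first assertion amounts to a bound on the generic Lelong number of $S_{m,\theta_m}$ along each member of $\EE$, and the second to a bound on the Lelong numbers of the pull-backs $(f^n)^*S_{m,\theta_m}$ at every point. The guiding observation is that $S_{m,\theta}$ differs from $S_m$ — for which Theorem \ref{analyticapproximation} and Proposition \ref{control1} already supply what is needed — only in that replacing the sum $\sum_j h^{\otimes(m+1)}(\sigma_{m,j})$ by a single term $h^{\otimes(m+1)}(\sigma_\theta)$ can only \emph{enlarge} Lelong numbers; the work is to show that this enlargement is controlled along $\EE$ and harmless away from it.

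For the choice of $\theta_m$ and the first inequality I would apply Lemma \ref{picktheta} (this is where the finiteness of $\EE$ enters): it provides a non-empty Zariski open $U_m\subset\C^{N_m}$ on which $\ord_X(\varphi_{m,\theta})=\ord_X(\varphi_m)$ for every $X\in\EE$. Choosing $\theta_m\in U_m\setminus\{0\}$, this equality says that the generic Lelong number of $S_{m,\theta_m}$ along $X$ equals that of $S_m$, which by Theorem \ref{analyticapproximation}(iii) is at most $\nu(S,X)$; rewriting through $\tfrac1m[H_m]=S_{m,\theta_m}+\tfrac1m\omega$ gives the first inequality. This step is routine.

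For the dynamical inequality the point is that it holds for \emph{every} $\theta$, so it does not conflict with the genericity just imposed. For arbitrary $\theta$, every $x\in\P^k$ and every $n$ one has the elementary bound $\nu\bigl((f^n)^*S_{m,\theta},x\bigr)\ge\nu\bigl((f^n)^*S_m,x\bigr)$: indeed $(f^n)^*\sigma_\theta=\sum_j\theta_j(f^n)^*\sigma_{m,j}$ vanishes at $x$ to order at least $\min_j\ord_x\bigl((f^n)^*\sigma_{m,j}\bigr)$, and — the smooth metric weights not affecting Lelong numbers — one has $m\,\nu\bigl((f^n)^*S_m,x\bigr)=\min_j\ord_x\bigl((f^n)^*\sigma_{m,j}\bigr)$ and $m\,\nu\bigl((f^n)^*S_{m,\theta},x\bigr)=\ord_x\bigl((f^n)^*H_m\bigr)$. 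Combining this with the right-hand bound of Proposition \ref{control1}, namely $\nu\bigl((f^n)^*S,x\bigr)\le\nu\bigl((f^n)^*S_m,x\bigr)+(k+1)d^n/m$, I obtain $\nu\bigl((f^n)^*S,x\bigr)\le\ord_x\bigl((f^n)^*H_m\bigr)+(k+1)d^n/m$, and dividing by $d^n$ produces exactly the claimed estimate. (Equivalently, one may re-run the blow-up plus Ohsawa--Takegoshi argument of Proposition \ref{control1} with $S_{m,\theta_m}$ in place of $S_m$; nothing there uses that the potential comes from a \emph{sum} of sections.)

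The only genuine obstacle — and the spot where care is needed — is that a \emph{single} parameter $\theta_m$ must satisfy both requirements: the estimate along $\EE$ forces $\theta_m$ into the Zariski open set $U_m$, whereas the dynamical estimate must hold for \emph{all} admissible $\theta_m$ and uniformly over $x\in\P^k$. The argument works precisely because these demands do not collide: genericity is needed only through the finitely many Zariski open conditions of Lemma \ref{picktheta}, while the dynamical bound rests solely on the fact that the order of vanishing of a sum is at least the minimum of the orders, which is automatic and uniform in $x$. A minor technical caveat, already present in the setup of Theorem \ref{analyticapproximation}, is that $\HH_m\neq 0$ for $m$ large, so that the construction is not vacuous.
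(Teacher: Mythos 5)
Your proposal is correct and follows essentially the paper's own route: pick $\theta_m$ generically via Lemma \ref{picktheta}, set $H_m:=\divi\bigl(\sum_j\theta_{m,j}\sigma_{m,j}\bigr)$ so that $\frac1m[H_m]=S_{m,\theta_m}+\frac1m\omega$, get the bound along $\EE$ from Theorem \ref{analyticapproximation}(iii), and get the dynamical bound by the mechanism of Proposition \ref{control1} (your derivation via Proposition \ref{control1} for $S_m$ plus the elementary monotonicity $\nu((f^n)^*S_{m,\theta},x)\geq\nu((f^n)^*S_m,x)$ is just a repackaging of the paper's ``carry out the same argument with $S_{m,\theta}$'', which you also note, and it is valid since $\sigma_{\theta}\in\HH_m$). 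Your careful tracking of the factor $\frac1m$ in $\ord_X(H_m)$ and $\ord_x((f^n)^*H_m)$ matches (indeed slightly sharpens) what the paper's construction actually yields, so there is no gap.
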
\Mp{enoughhypersurface}

As an immediate consequence, if $S$ is a positive closed (1,1)-current and 

$$\sup_{x\in\P^k}d^{-n}\ord_x\left((f^n)^*H_m\right)\to0$$ for every $m\geq1$, we obtain the equidistribution of $S$ towards the Green current associated to $f$.


\section{Equidistribution}

In this section we prove our main results. Recall that if $X\in\EE_f$ then there exists $s\geq1$ (minimal) such that $f^{-s}(X)=X$. By Lemma \ref{wlog} we can assume without loss of generality that $s=1$.

\subsection{Proof of Theorems A and C}

By Theorem \ref{enoughhypersurface}, given any positive closed (1,1)-current and any finite family $\EE$ of irreducible varieties, we can find a sequence of hypersurfaces $H_m$ with the properties

$$\ord_X(H_m)\leq\nu(S,X)\quad\forall\,X\in\EE,$$ and

\begin{equation}\label{controlbyhyper}
0\leq\nu\left(d^{-n}(f^n)^*S,x\right)\leq d^{-n}\ord_x\left((f^n)^*H_m\right)+\frac{k+1}{m}
\end{equation}\Mpe{controlbyhyper} for all $x\in\P^k$. In particular, if the generic Lelong number $\nu(S,X)$ of $S$ along $X$ is zero for all $X\in\EE$, we have that $\ord_X(H_m)=0$ for all $X\in\EE$ and all $m\in\N$.\\

We recall from the introduction, Guedj's characterization of equidistribution:

\begin{equation}\label{gc}
d^{-n}(f^n)^*S\to T_f\Longleftrightarrow \lim_{n\to+\infty}\sup_{x\in\P^k}d^{-n}\ord_x\left((f^n)^*H_m\right)=0.
\end{equation}\Mpe{gc}

\begin{proof}[Proof of Theorem C]

Let $\EE_\DS$ be the collection of irreducible components of the totally invariant algebraic set constructed by Dinh-Sibony (see the introduction). By \cite{MR2468484}, Theorem 7.1 we have 

$$\ord_X(H_m)=0\quad\text{for all } X\in\EE_\DS\Rightarrow d^{-n}(f^n)^*H_m\rightarrow T_f$$ which, by the right implication in \eqref{gc}, gives us

$$\lim_{n\to+\infty}\sup_{x\in\P^k}d^{-n}\ord_x\left((f^n)^*H_m\right)=0$$ which implies

$$\limsup_{n\to+\infty}\sup_{x\in\P^k}\nu\left(d^{-n}(f^n)^*S,x\right)\leq\frac{k+1}{m}$$ for all $m\in\N$ by the inequality \eqref{controlbyhyper}. Letting $m\to+\infty$ we get

$$\lim_{n\to+\infty}\sup_{x\in\P^k}\nu\left(d^{-n}(f^n)^*S,x\right)=0$$ and by the left implication in \eqref{gc}, we conclude that

$$d^{-n}(f^n)^*S\to T_f.$$
\end{proof}

To prove Theorem A, we use the same arguments as above and reduce the problem to the case of $S$ the current of integration over some hypersurface $H\subset\P^k$. Here, the family $\EE_f$ is the exceptional family defined in Section 4.3.

\begin{proof}[Proof of Theorem A]
Let $H\subset\P^k$ be a hypersurface such that $\ord_X(H)=0$ for all $X\in\EE_f$, where $\EE_f$ is the exceptional family. Observe that $\ord_X(H)=0$ implies that $H|_X$ is a well defined Cartier divisor in $X$.\\

Let $X\in\EE_f$ and define $g:=f|_X:X\to X$; then

\begin{equation}\label{restriction}
d^{-n}\ord_x((f^n)^*H)\leq d^{-n}\ord_x((g^n)^*H|_X) \quad\forall\,x\in X.
\end{equation}\Mpe{restriction}

Let $\pi:\tilde X\to X$ be the normalization of $X$. By assumption, $\tilde X$ has at worst isolated quotient singularities, hence it is $\Q$-factorial and klt (see Section 3). Moreover, there exists a regular map $\tilde g:\tilde X\to\tilde X$ such that the diagram

\begin{equation*}
\xymatrix{\ar @{} [dr] 
\tilde X\ar[r]^{\tilde g}\ar[d]_{\pi}&\tilde X\ar[d]^{\pi}\\
X\ar[r]_{g}&X
}\end{equation*} commutes. Note that for every $y\in\tilde X$ and every germ $\phi\in\OO_{\tilde X,\pi(x)}$ if follows that $\ord_{\pi(y)}(\phi)\leq\ord_y(\phi\circ\pi)$. This, in particular implies that

$$\ord_{\pi(y)}((g^n)^*H|_X)\leq\ord_y(\pi^*(g^n)^*H|_X)=\ord_y(({\tilde g}^n)^*(\pi^*H|_X))$$ giving us

$$\sup_{x\in X}d^{-n}\ord_x((g^n)^*H|_X)\leq \sup_{y\in{\tilde X}}d^{-n}\ord_y(({\tilde g}^n)^*(\pi^*H|_X)).$$

By Theorem \ref{cocycleineq1} we have that

$$d^{-n}\ord_y(({\tilde g}^n)^*(\pi^*H|_X))\leq d^{-n}\mu_n^{\tilde X}(y)\ord_{{\tilde g}^n(y)}(\pi^*H|_X)$$ where $\mu_n^{\tilde X}$ is the submultiplicative cocycle defined by 

$$\mu_n^{\tilde X}(y)=C_{\tilde X}\left(\dim(\tilde X)+\ord_y(\CC_{{\tilde g}^n})\right)\quad\text{(see Section 4).}$$  

We know from Theorem \ref{asymptotic1} that there exist constants $C>0$ and $0\leq\rho<d$ such that

$$\sup_{y\notin E_{\tilde X}}\mu_n^{\tilde X}(y)\leq C\rho^n$$ for all $n\in \N$, where $E_{\tilde X}$ is the totally invariant algebraic set

$$E_{\tilde X}=\left\{x\in\tilde X\mid\mu^{\tilde X}_{\infty}(x)=d\right\}.$$

Recalling that the algebraic set $E_X:=\pi(E_{\tilde X})$ is totally invariant by $g$ (Corollary \ref{imageexceptional}), we hence obtain

\begin{multline}\label{finalineq}
\sup_{x\in X}d^{-n}\ord_x((f^n)^*H)\leq\sup_{x\notin E_X}d^{-n}\ord_x((f^n)^*H)+\sup_{x\in E_X}d^{-n}\ord_x((f^n)^*H)\leq\\ \leq  \sup_{x\notin E_{\tilde X}}d^{-n}\ord_y(({\tilde g}^n)^*(\pi^*H|_X))+\sup_{x\in E_X}d^{-n}\ord_x((f^n)^*H)\leq\\\leq C\left(\frac{\rho}{d}\right)^n+\sup_{x\in E_X}d^{-n}\ord_x((f^n)^*H).
\end{multline}\Mpe{finalineq}

We now proceed by induction on the partially ordered set $\EE_f$: If $X$ is a leaf (i.e. $E_X=\emptyset$) we obtain that 

$$\sup_{x\in X}\nu(d^{-n}(f^n)^*H,x)\leq C''(\rho/d)^n\to0$$ as $n\to+\infty$. In general, for $X\in\EE_f$ assume that for every $X'\preceq X$ we have that

$$\sup_{x\in X'}\nu(d^{-n}(f^n)^*H,x)\to0$$ as $n\to+\infty$.\\

Since every irreducible component $X'$ of $E_X$ satisfies $X'\preceq X$, we get

$$\sup_{x\in E_X}d^{-n}\ord_x((f^n)^*H)\to0$$ as $n\to+\infty$, implying

$$\sup_{x\in X}\nu(d^{-n}(f^n)^*H,x)\leq C'\left(\frac{\rho}{d}\right)^n+\sup_{x\in E_X}d^{-n}\ord_x((f^n)^*H)\to0$$ by inequalities \eqref{restriction} and \eqref{finalineq}. The desired conclusion then follows.

\end{proof}


\subsection{Proof of Corollary B}

If $f:\P^3\to\P^3$ is a holomorphic map of degree $d\geq2$ and $X\subset\P^3$ is an irreducible algebraic subset such that $f^{-1}(X)=X$, Corollary B would follow immediately if the normalization of every such $X$ has at worst (isolated) quotient singularities. Note that if $X$ is a curve, its normalization is automatically smooth.\\

Now, let $X\subset\P^3$ be a surface such that $f^{-1}(X)=X$ and let $g:=f|_X:X\to X$, $\tilde\pi:\tilde X\to X$ its normalization and $\tilde g:\tilde X\to\tilde X$ its holomorphic lift.\\

By \cite[Theorem B]{FavreSelfMapsSurfaces} or \cite{MR1044058} for every $x\in \tilde X$, we have

\begin{itemize}
\item[(i)] If $(\tilde X,x)$ is klt, then $(\tilde X,\tilde g(x))$ is klt.

If in addition we have that $\tilde g(x)=x$, then
\item[(ii)] if $x\in\CC_{\tilde g}$, then $(\tilde X,x)$ is klt;
\item[(iii)] if $x\notin\CC_{\tilde g}$, then $(\tilde X,x)$ is not klt (the singularity is log-canonical instead).\\
\end{itemize}

Observe that for any given $x\in\tilde X$, if the set $\bigcup_{n\geq1}\tilde g^{-n}(x)$ is not finite, then $x$ must be the image of some smooth point, and by (i) above $x$ is klt. Therefore, we just need to deal with the case $x$ totally invariant (hence fixed) by some iterate of $\tilde g$ reducing the problem to the case (iii).\\

The case (iii) can be divided in two subcases: Either $(\tilde X,x)$ is a cusp or not. The case $(\tilde X,x)$ a cusp can be ruled out by Theorem 1.4 in \cite{NakayamaRIMS}. If $(\tilde X,x)$ is not a cusp, Proposition 2.1 in \cite{FavreSelfMapsSurfaces} implies that we can find a proper modification $\bar\pi:\bar X\to\tilde X$ such that $\bar X$ has only klt singularities and $\tilde g$ lifts to a holomorphic map $\bar g:\bar X\to\bar X$ giving us the commutative diagram

\begin{equation}\label{lift3}
\xymatrix{\ar @{} [dr] |{\circlearrowright}
\bar X \ar[d]_{\pi} \ar[r]^{\bar g} & \bar X \ar[d]^{\pi}\\
X \ar[r]_{g} & X }\end{equation} where $\pi:=\tilde\pi\circ\bar\pi$. We can now apply Theorem A to \eqref{lift3} finishing the argument of Corollary B.\\

Alternatively, in the same setting as above,  in \cite{Zhang-classification} D.Q. Zhang found a concrete classification for $X\subset\P^3$. More precisely, Zhang states that either $\deg(X)=1$ (i.e.\ $X$ is a plane) or $X$ is a cubic given by one of the following four defining equations

\begin{itemize}
\item[(i)] $X_3^3+X_0X_1X_2$;
\item[(ii)] $X_0^2X_3+X_0X_1^2+X_2^3$;
\item[(iii)] $X_0^2X_2+X_1^2X_3$;
\item[(iv)] $X_0X_1X_2+X_0^2X_3+X_1^3$.
\end{itemize}

The surfaces given by (i) and (ii) are both normal with klt singularities. The singular locus of the varieties given by (iii) and (iv) is a single line which is totally invariant and their normalizations correspond to the smooth surface given by the one-point blowup of $\P^2$.

\subsection{Proof of Corollary D}

We provide a direct argument for this, not relying on the results given by Dinh-Sibony in \cite{MR2468484}.\\

As before, we define the Jacobian cocycle

$$\mu_n(x):=k+\ord_x(\Jac(f^n))$$ as described in Section 4.\\

The totally invariant set $$E:=\bigcup_{\delta>0}\bigcap_{n\in\N}\left\{x\in\P^k\mid \mu_n(x)\geq\delta d^n\right\}$$ is algebraic by Theorem \ref{algebraictotinv} and we know from Theorem \ref{asymptotic1} that there exist positive constants $C$ and $\rho<d$ such that

\begin{equation}\label{controlfinal}
\sup_{x\in\P^k\setminus E}\mu_n(x)\leq C\rho^n,\quad\forall\,n\in\N.
\end{equation}

Now the conclusion follows since

$$\sup_{x\in\P^k}\nu(d^{-n}(f^n)^*S,x)\leq\sup_{x\in\P^k\setminus E}\nu(d^{-n}(f^n)^*S,x)+\underbrace{\sup_{x\in E}\nu(d^{-n}(f^n)^*S,x)}_{=\,0}$$ and 

$$\sup_{x\in\P^k\setminus E}\nu(d^{-n}(f^n)^*S,x)\leq \sup_{x\in\P^k\setminus E}d^{-n}\mu_n(x)\leq C\left(\frac{\rho}{d}\right)^n\to 0$$ by Theorem \ref{cocycleineq1} and inequality \eqref{controlfinal}.

\def\lasp{\leavevmode\raise.45ex\hbox{$\lhook$}}
\providecommand{\bysame}{\leavevmode\hbox to3em{\hrulefill}\thinspace}
\providecommand{\MR}{\relax\ifhmode\unskip\space\fi MR }
\providecommand{\MRhref}[2]{%
  \href{http://www.ams.org/mathscinet-getitem?mr=#1}{#2}
}
\providecommand{\href}[2]{#2}

\end{document}